\newenvironment{psmallmatrix}
{\left(\begin{smallmatrix}}
	{\end{smallmatrix}\right)}
\theoremstyle{plain}
\newtheorem{theorem}{Theorem}
\newtheorem{lemma}[theorem]{Lemma}
\newtheorem{corollary}[theorem]{Corollary}
\newtheorem{proposition}[theorem]{Proposition}
\newtheorem{conjecture}[theorem]{Conjecture}
\theoremstyle{definition}
\newtheorem{definition}[theorem]{Definition}
\newtheorem{example}[theorem]{Example}
\newtheorem{remark}[theorem]{Remark}
\theoremstyle{remark}
\numberwithin{equation}{section}
\newcommand{\R}{\mathbb{R}}
\newcommand{\C}{\mathbb{C}} 
\newcommand{\N}{\mathbb{N}}
\newcommand{\Z}{\mathbb{Z}} 
\newcommand{\Q}{\mathbb{Q}}
\newcommand{\kp}{\mathfrak{p}} 
\newcommand{\cE}{\mathcal{E}}
\newcommand{\cL}{\mathcal{L}}
\newcommand{\cH}{\mathcal{H}}
\newcommand{\cK}{\mathcal{K}}
\newcommand{\cR}{\mathcal{R}}
\DeclareMathOperator{\ch}{ch}
\DeclareMathOperator{\SO}{SO}
\DeclareMathOperator{\Spin}{Spin}
\newcommand{\Spinc}{\Spin^c}
\newcommand{\dirac}{\partial \!\!\! \slash}
\DeclareMathOperator{\Ad}{Ad}
\DeclareMathOperator{\id}{id}
\DeclareMathOperator{\DInd}{D-Ind}
\DeclareMathOperator{\KInd}{K-Ind}
\DeclareMathOperator{\L2ind}{\text{$L^2$}-index}
\DeclareMathOperator{\ind}{index}
\newcommand{\Aav}{\hat A_G}
\begin{document}

	\title[Positive Scalar Curvature and Poincar\'{e} Duality for Proper Actions]
	{Positive Scalar Curvature and Poincar\'{e} Duality\\ for Proper Actions}
	

	\author{Hao Guo}
	\address{School of Mathematical Sciences, University of Adelaide, Adelaide SA 5005 Australia. }
	\email{oug.oah@gmail.com}
	
	\author{Varghese Mathai}
	\address{School of Mathematical Sciences, University of Adelaide, Adelaide SA 5005 Australia. }
	\email{mathai.varghese@adelaide.edu.au}
	
	\author{Hang Wang}
	\address{School of Mathematical Sciences, University of Adelaide, Adelaide SA 5005 Australia. }
	\email{hang.wang01@adelaide.edu.au}
	
	\thanks
	{\noindent V.M. is partially supported by the Australian Research Council via ARC Discovery Project grants DP170101054 and DP150100008.
		H.W. is supported by the Australian Research Council via ARC DECRA DE160100525, and thanks Yoshiyasu Fukumoto for discussions on geometric $K$-homology. H.G. is supported by a University of Adelaide Divisional Scholarship. }
	
	\keywords{Positive scalar curvature, equivariant index theory, equivariant Poincar\'e duality, proper actions, almost-connected Lie groups, discrete groups, equivariant geometric K-homology,  equivariant
		$\Spinc$-rigidity}
	
	\subjclass[2010]{Primary 53C27, Secondary 58J28, 19K33, 19K35, 19L47, 19K56}

	\begin{abstract}
		For $G$ an almost-connected Lie group, we study $G$-equivariant index theory for proper co-compact actions with various applications, including obstructions to and existence of $G$-invariant Riemannian metrics of positive scalar curvature. We prove a rigidity result for almost-complex manifolds, generalising Hattori's results, and an analogue of Petrie's conjecture. When $G$ is an almost-connected Lie group or a discrete group, we establish Poincar\'e duality between 
		$G$-equivariant $K$-homology and $K$-theory, observing that Poincar\'e duality does not necessarily hold for general $G$.

	\end{abstract}
	
	\maketitle
	
	\section{Introduction}
	
	In this paper, we study $G$-equivariant index theory for proper co-compact actions with various applications, including obstructions to $G$-invariant Riemannian metrics of positive scalar curvature. As a corollary, we establish an alternate short proof of a recent result of Weiping Zhang \cite{Zhang17} in Section 6. We also prove the existence of $G$-invariant Riemannian metrics of positive scalar curvature under certain very general hypotheses on $G$ and its action on the manifold (see Theorem \ref{thm:PSCexistence}).
	
	Our $G$-equivariant index theory is applied to prove rigidity theorems in Section 7.1  for certain $\Spinc$-Dirac operators on almost-complex manifolds with a proper $G$-action,
	generalising results of Hattori \cite{Hat} (see also Atiyah-Hirzebruch \cite{AH} and Hochs-Mathai \cite{HM16}). In addition, we prove an analogue of Petrie's conjecture \cite{Petrie}
	in Section 7.2.

	
	We begin in Section \ref{sec:equivalence} by defining the $G$-equivariant geometric $K$-homology $K^{\textnormal{{geo,}} G}_{\bullet}(X)$ for almost-connected $G$ as a natural extension of the original definition of Baum and Douglas \cite{Baum-Douglas82} and the definition in the compact group case \cite{BOSW}. It is a description of $K$-homology as a quotient of the equivariant bordism group over $X$. We extend the result in \cite{BOSW} for compact groups to show that it is isomorphic, via the Baum-Douglas map, to the analytic $G$-equivariant $K$-homology $K^G_{\bullet}(X)$ defined by Kasparov \cite{Kasparov, Kasparov.K-homology}.
	
	
	Part of the technique we develop there, namely induction on analytic $K$-homology, is then used in Section \ref{sec:PD} to prove one of our main theorems: equivariant Poincar\'e duality for almost-connected Lie groups and discrete groups. In the almost-connected case, this comes in the form of an isomorphism
	\begin{equation}\label{eqn:PD}
	\mathcal{P}\mathcal{D}: K^G_{\bullet}(C_\tau(X))\simeq K^G_{\bullet}(X),
	\end{equation}
	where $C_\tau(X)$ is an algebra of continuous sections of the Clifford bundle. Our result extends equivariant Poincar\'{e} duality for compact groups, which is a consequence of work by Kasparov \cite{Kasparov}. Our proof uses a result of Phillips \cite{Phillips2} showing that there is an ``induction" isomorphism of $K$-theory groups $K^{K}_{\bullet}(C_\tau(Y)) \simeq K^{G}_{\bullet+d}(C_\tau(X))$, where $K$ is a maximal compact subgroup of $G$ and $Y$ is a $K$-slice of $X$. We also establish in this section versions of Poincar\'e duality when $X$ is either not $G$-cocompact or has boundary, and relate our results to those obtained by Kasparov in \cite{Kasparov}. In Section 3.2, we make use of previous work in \cite{LO} and \cite{BaumHigsonSchick.eq} on equivariant $K$-theory and $K$-homology for discrete groups to prove equivariant Poincar\'{e} duality for discrete groups using the Mayer-Vietoris sequence.
	
	The significance of our results on Poincar\'{e} duality can be seen as two-fold. First, they establish that equivariant Poincar\'{e} duality holds for a large class of {\em non-compact} Lie groups, and is an extension to non-compact groups the type of Poincar\'{e} duality proved in \cite{Kasparov}. Second, the fact that the duality holds for these groups places into context the observation, made by Phillips in \cite{Phillips1} and L\"uck-Oliver in \cite{LO} \S5, that for an {\em arbitrary} Lie group $G$, an equivariant $K$-theory constructed from finite-dimensional $G$-vector bundles may not always be a generalised cohomology theory. This means one cannot expect Poincar\'{e} duality - as formulated in this way (for another way to formulate it in the case of non-compact group actions see Emerson and Meyer \cite{EmersonMeyer2} and \cite{EmersonMeyer1}) - to hold when $G$ is an arbitrary Lie group. A concrete non-example of a group for which it does not hold is the semi-direct product $G=(S^1\times S^1) \rtimes_\alpha \Z$, where $\alpha (n)= \begin{psmallmatrix}1 & 0\\n & 1\end{psmallmatrix},\, n\in \Z$, which is a non-linear Lie group. As has been observed by others, in cases such as this one, Phillips' generalisation of equivariant $K$-theory \cite{Phillips1} is not the same as the definition via $C^*$-algebras. 
	
	We observe that one general situation in which they are equivalent is when $G$ has only finitely many connected components (see Lemma \ref{lem:phil}), and this fact guides our proof of Poincar\'{e} duality in the almost-connected group case.
	
	In Section \ref{sec:Index theory of a K-homology class} we study the relationship of $K$-equivariant index theory to $G$-equivariant index theory, where $K$ is again a maximal compact subgroup of $G$ (compare \cite{MZ}). The difference between our approach and, for instance, the approach in \cite{HochsDS}, is that we present a more direct proof, as well as give applications, of the fact that the following diagram relating the compact and non-compact indices commutes:
	\begin{equation}
	\label{indexcommutes}
	\begin{CD}
	K_{\bullet}^K(Y) @>\ind_K>> K_{\bullet}(C^*_r(K))\\
	@V\simeq V\KInd V @V\simeq V\DInd V\\
	K_{\bullet+d}^G(X) @>\ind_G>>K_{\bullet+d}(C^*_r(G)).
	\end{CD}
	\end{equation}
	Here, the left vertical arrow is analytic induction from $K$ to $G$, and the 
	right vertical arrow is Dirac induction from $K$ to $G$.
	
	One of the consequences of this result (see Section \ref{sec:reduction}) is an elegant integral trace formula for the special case when $G$ is a connected, semisimple Lie group with finite centre, $\dim G/K$ is even and $K$ has maximal rank. This result builds on the work of Atiyah-Schmid \cite{AtiyahSchmid} and Lafforgue \cite{LafforgueICM}, but combines it with the integral formula for an $L^2$-index, proved by Wang in \cite{Wang}. More precisely, we prove that if $M$ is a $G$-Spin$^c$-manifold and $N$ a $K$-slice, then the previous commutative diagram  Then the diagram \ref{indexcommutes} fits into a larger commutative diagram involving the von Neumann trace on the $K$-theory of $C_r^*(G)$, denoted by $\tau_G$, and a formula for the formal degree of discrete series representations in terms of the root systems of $K$ and $G$ (see \cite{LafforgueICM}):
	\begin{equation*}\label{tracediagram}
	\begin{tikzcd}
	K_0^G(M)\arrow{r}{\ind_G} & K_0(C_r^*(G))\arrow{rd}{\tau_G} & \\ 
	& & \mathbb{R}.\\
	K_0^K(N)\arrow{uu}{\KInd}\arrow{r}{\ind_K} & R(K)\arrow{uu}{\DInd}\arrow{ru}{\Pi_K} &
	\end{tikzcd}
	\end{equation*}
	Here the quantity $\Pi_K([V_\mu]) :=\prod_{\alpha\in\Phi^+}\frac{(\mu+\rho_c,\alpha)}{(\rho,\alpha)}$ (for details of the notation see Section \ref{sec:reduction}). Using the result of Wang \cite{Wang}, we obtain naturally an equality of two integrals of characteristic classes, one on $M$ and the other on the compact slice $N$.
	
	
	\vspace{0.3cm}
	
	\tableofcontents

	\section{Equivalence of Analytic and Geometric $K$-homologies} \label{sec:equivalence}

	The goal of this section is to prove Theorem~\ref{thm:eq.K-homology}, which states the equivalence of two $K$-homology theories for a large class of groups and spaces, building on the work done in \cite{BaumHigsonSchick}, \cite{BaumHigsonSchick.eq} and \cite{BOSW}. Let us give a very brief introduction to these two theories.
	
	{\em Analytic $K$-homology} was first studied by Atiyah~\cite{Atiyah70}, who was motivated by the classification of elliptic pseudo-differential operators on a locally compact topological space. Brown-Douglas-Fillmore~\cite{BDF} then understood it for $C^*$-algebras from the point of view of extension, before Kasparov~\cite{Kasparov.K-homology} formulated it in the general setting of $KK$-theory. It is defined using analytic $K$-homology cycles.
	
	\begin{definition}[{\cite{Kasparov.K-homology, BCH}}]
		\label{def:ana.K.homology}
		Let $G$ be a locally compact group acting properly on a Hausdorff space $X$. 
		An \emph{analytic $K$-homology cycle}, or \emph{Kasparov cycle}, is a triple of the form $(\cH, \phi, F)$, where
		\begin{itemize}
			\item $\cH$ is a $\Z_2$-graded $G$-Hilbert space, 
			\item $\phi: C_0(X)\rightarrow\cL(\cH)$ is an even $G$-equivariant $*$-homomorphism and 
			\item $F$ is an odd self-adjoint bounded linear operator on $\cH$, 
		\end{itemize}
		where $\mathcal{L}(\cH)$ is the $C^*$-algebra of bounded operators on $\cH$, such that the elements 
		\[
		\phi(a)(F^2-1), [\phi(a), F], [g, F]
		\] 
		belong to the $*$-subalgebra $\cK(\cH)$ consisting of the compact operators, for any $a\in C_0(X)$ and any $g\in G.$ 
		The (even) \emph{$G$-equivariant analytic $K$-homology of $X$}, denoted $K^G_0(X)$, is the abelian group generated by Kasparov cycles subject to certain equivalence relations given by homotopy. 
		There is also an odd part, $K^G_1(X)$, whose elements are represented by cycles $(\cH, \phi, F)$ with no imposed $\Z_2$-grading.
	\end{definition}
	
	\begin{remark}
		When $A$ and $B$ are $G$-$C^*$-algebras, a Kasparov $(A, B)$-cycle is defined similarly as in Definition~\ref{def:ana.K.homology}, with the difference that $\cH$ is now a Hilbert $B$-module and $C_0(X)$ is replaced by $A$ (see \cite{Kasparov.K-homology} Definitions 2.2 and 2.3). Equivalence classes of such cycles form an abelian group $KK^G(A, B) := KK_0^G(A,B)$. There also exists an odd part $KK^G_1(A,B)$. $KK$-theory encompasses analytic $K$-homology in the sense that
		\[
		KK_{\bullet}^G(C_0(X), \C)\simeq K^G_{\bullet}(X).
		\]
	\end{remark}
	
	On the other hand, geometric $K$-homology was introduced by Baum and Douglas~\cite{Baum-Douglas82} from the perspective of Spin geometry and Dirac operators. 
	
	\begin{definition}[\cite{Baum-Douglas82, BaumHigsonSchick}]
		\label{def:geoKhomo}
		Let $X$ be a $G$-space. 
		A \emph{geometric $K$-homology cycle} is a triple of the form $(M, E, f)$, 
		where 
		\begin{itemize}
			\item $M$ is a proper $G$-cocompact manifold with a $G$-equivariant $\Spinc$-structure, 
			\item $E$ is a smooth Hermitian $G$-equivariant vector bundle over $M$ and 
			\item $f: M\rightarrow X$ is a continuous $G$-equivariant map.
		\end{itemize}  
		The $G$-\emph{equivariant geometric $K$-homology}, which we shall write as $K_{\bullet}^{\textnormal{{geo,}} G}(X)$, where $\bullet = 0$ or $1$, is an abelian group generated by geometric cycles $(M,E,f)$ where $\dim M \equiv 1$ or $0$ mod 2, subject to an equivalence relation generated by three operations: {\em direct sum/disjoint union}, {\em bordism} and {\em vector bundle modification}. The first relation amounts to the identification
		\[
		(M\sqcup M, E_1\sqcup E_2, f\sqcup f)\sim(M, E_1\oplus E_2, f),
		\]
		while the statement of the latter two operations are more involved. For their definitions we refer to pp. 5-6 of \cite{BaumHigsonSchick.eq}. We will follow the definitions contained there, except that $G$ for us is an almost-connected Lie group instead of a discrete group. 
		
		Addition in the group is given by 
	 	$$(M_1,E_1,f_1) + (M_2,E_2,f_2) = (M_1\sqcup M_2,E_1\sqcup E_2,f_1\sqcup f_2),$$ with the additive inverse of $(M,E,f)$ being given by the cycle $(-M,E,f)$, where by $-M$ we mean the $G$-$\Spinc$-manifold with the opposite $G$-$\Spinc$-structure to $M$ (see p.5 of \cite{BaumHigsonSchick.eq}).	 	
	\end{definition}
	Implicit in the definition of a bordism $W$ from a $G$-manifold $M_1$ to a $G$-manifold $M_2$ is that $M_1$ and $M_2$ are contained in $G$-equivariant collar neighbourhoods at the ends of $W$. The existence of such neighbourhoods for proper actions is proved in \cite{Kankaanrinta} Theorem 3.5.
	\begin{example}
		\label{ex:fundamental.class}
		Let $K$ be a compact Lie group and $Y$ a compact $K$-manifold. If $Y$ has a $K$-equivariant $\Spinc$-structure, we can define the \emph{fundamental class} of $K^{\textnormal{{geo,}} K}_{n}(Y)$ to be
		\[
		[Y]_K:=[(Y, Y\times \C,\id_Y)]\in K^{\textnormal{{geo,}} K}_{n}(Y),\qquad n=\dim Y \text{ (mod}~2).
		\]
		Similarly, if $G$ is an almost-connected Lie group acting properly on a $G$-cocompact manifold $X$ with a $G$-invariant $\Spinc$-structure, the fundamental class of $K^{\textnormal{{geo,}} G}_{n}(X)$ is given by 
		\[
		[X]_G:=[(X, X\times \C,\id_X)]\in K^{\textnormal{{geo,}} G}_{n}(X),\qquad n=\dim X \text{ (mod}~2).
		\]
		One of the themes of this section of the paper is that when $K$ is a maximal compact subgroup of $G$ and $Y$ sits suitably inside $X$ as a $K$-invariant submanifold, there is a one-to-one correspondence between such classes, provided the homogeneous manifold $G/K$ is also $G$-$\Spinc$.
	\end{example}
	
		Baum and Douglas introduced, originally in the non-equivariant setting, a natural map from geometric $K$-homology to analytic $K$-homology \cite{Baum-Douglas82}; it extends naturally to the equivariant setting. Our version of this map, which is to say in the setting when $G$ is an almost-connected Lie group, will be denoted by
	\begin{equation}
	\label{eq:BaumDoug}
	BD: K^{\textnormal{{geo,}} G}_{\bullet}(X)\rightarrow K^G_{\bullet}(X),
	\end{equation}
	and defined as follows. Let $(M,E,f)$ be a geometric $K$-homology cycle for $X$. Since $M$ is $G$-equivariantly $\Spinc$ by assumption, one can construct a {\em $\Spinc$-Dirac operator} on $M$, which acts on a  spinor bundle $S_M\rightarrow M$. For reference in later sections, note that $S_M$ is locally constructed by tensoring a Hermitian connection on the determinant line bundle $L$ (associated to the given $\Spinc$-structure) with the lift of the Levi-Civita connection on $TM$ to the local spinor bundle. Note also that if $M$ is Spin, the Spin$^c$-Dirac operator can be realised as the Spin-Dirac operator twisted by a line bundle. 
	
	Let $D_E$ be the operator $D$ twisted by the vector bundle $E$. It can be viewed as an unbounded, densely-defined operator on the Hilbert space $L^2(S_M\otimes E)$. Let $m$ be the representation of $C_0(M)$ on $L^2(S_M\otimes E)$ given by pointwise multiplication of functions on sections. Since $D_E$ is self-adjoint, we can use functional calculus to form the $L^2$-bounded operator $D_E(1+D_E^2)^{-\frac{ 1}{2}}$. Then the triple
	$$\left(L^2(S_M\otimes E), m, D_E(1+D_E^2)^{-\frac12}\right)$$
	is an analytic $K$-homology cycle defining a class in $K_0^G(M)$. Call this class $[D_E]$ (we may also denote it as a cap product $[E]\cap[D]$, depending on the context). Let $f': C_0(X)\rightarrow C_0(M)$ be the contravariant map on algebras given by 
	\begin{equation}
	\label{eq:f'f}
	(f'(g))(m)=g(f(m)),\qquad g\in C_0(X),\, m\in M.
	\end{equation}
	Then the image of $[(M,E,f)]$ under $BD$ is defined to be the class in $K^G_0(X)$ represented by the Kasparov cycle
	\[
	\left(L^2(S_M\otimes E), m\circ f', D_E(1+D_E^2)^{-\frac12}\right).
	\]
	We will also write this class using the pushforward notation $f_*([D_E])$. Notice that $$f_*([D_E])=f'^*([D_E]),$$ where $f'^*:K^G_0(M)\rightarrow K^G_0(X)$ is the covariant functorial map on analytic $K$-homology.

	It has long been conjectured that equivariant analytic and geometric $K$-homologies are in general equivalent. This was proved in the case that $X$ is a compact CW-complex without group action in \cite{BaumHigsonSchick}, some twenty-five years after the conjecture was posed. Subsequently, the cases of cocompact discrete group action~\cite{BaumHigsonSchick.eq} and compact Lie group acting on a compact CW-complex~\cite{BOSW} were confirmed. Our main aim in this section is to confirm this conjecture for the case of $X$ a manifold admitting a proper cocompact action of an almost-connected Lie group $G$. More precisely, we prove:
	
	\begin{theorem}
		\label{thm:eq.K-homology}
		Let $G$ be an almost-connected Lie group acting smoothly, properly and cocompactly on a manifold $X$. If $G/K$ admits a $G$-equivariant $\Spinc$-structure, the Baum-Douglas map relating $G$-equivariant analytic and geometric $K$-homologies is an isomorphism
		\begin{equation}
		\label{eq:eq.K-homology}
		K^{\textnormal{{geo,}} G}_{\bullet}(X)\simeq K^G_{\bullet}(X).
		\end{equation}
	\end{theorem}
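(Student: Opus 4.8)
The plan is to reduce the statement, via Phillips' induction isomorphism and the analogous induction isomorphism on the analytic side, to the already-known case of a compact group $K$ acting on a compact manifold from \cite{BOSW}. Concretely, let $K$ be a maximal compact subgroup of $G$ and let $Y \subset X$ be a $K$-slice, so that $X \cong G \times_K Y$ with $Y$ a compact $K$-manifold (after shrinking, using the slice theorem for proper actions). The strategy is to build a commutative square
\begin{equation*}
\begin{CD}
K^{\textnormal{{geo,}} K}_{\bullet}(Y) @>BD>> K^K_{\bullet}(Y)\\
@V\KInd^{\mathrm{geo}}VV @VV\KInd V\\
K^{\textnormal{{geo,}} G}_{\bullet+d}(X) @>BD>> K^G_{\bullet+d}(X),
\end{CD}
\end{equation*}
where $d = \dim(G/K)$, the right-hand vertical map is the analytic induction isomorphism (which exists once $G/K$ is $G$-equivariantly $\Spinc$, so that Dirac induction along the fibre $G/K$ is available, compare the discussion preceding \eqref{indexcommutes}), and the left-hand vertical map is a geometric analogue of induction, sending a geometric cycle $(N, E, f)$ over $Y$ to the cycle $(G\times_K N,\, G\times_K(S_{G/K}\otimes E),\, \mathrm{id}_G \times_K f)$ over $X = G\times_K Y$, where $S_{G/K}$ is the spinor bundle of the $\Spinc$-structure on $G/K$. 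The top horizontal arrow is an isomorphism by \cite{BOSW}, so once the square commutes and both verticals are isomorphisms, the bottom arrow is forced to be an isomorphism as well, which is exactly \eqref{eq:eq.K-homology}.

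The steps, in order: (1) verify that the geometric induction map $\KInd^{\mathrm{geo}}$ is well-defined on equivalence classes — i.e. that it respects direct sum/disjoint union, bordism, and vector bundle modification; the first is immediate, bordism is handled by inducing the bordism $W \mapsto G\times_K W$ (using $G$-equivariant collar neighbourhoods, which exist by \cite{Kankaanrinta} Theorem 3.5 as noted after Definition~\ref{def:geoKhomo}), and vector bundle modification commutes with the $G\times_K(-)$ construction because sphere bundles and the relevant spinor bundles pull back correctly; (2) show $\KInd^{\mathrm{geo}}$ is an isomorphism — surjectivity because every $G$-$\Spinc$-manifold over $X$ is, up to the equivalence relation, induced from a $K$-slice (every proper $G$-cocompact manifold is $G$-diffeomorphic to $G\times_K$ (a compact $K$-manifold), and one tracks the $\Spinc$-structure through the splitting $TM|_{\text{slice}} \cong T(G/K) \oplus TN$), and injectivity by the same slice argument applied to bordisms; (3) check commutativity of the square at the level of cycles, which comes down to the identity that the $\Spinc$-Dirac operator of $G\times_K N$ with its induced structure, after twisting by the induced bundle, represents the class $\KInd[D_E^N]$ in $K^G_{\bullet+d}(X)$ — this is precisely the multiplicativity of the Dirac class under the fibration $G\times_K N \to G/K$, i.e. a relative-index / Dirac-induction computation of the kind underlying \eqref{indexcommutes}.

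The main obstacle I expect is step (2), and within it the careful bookkeeping of $\Spinc$-structures under induction: one must check that the hypothesis "$G/K$ admits a $G$-equivariant $\Spinc$-structure" is exactly what is needed to set up a bijection between $K$-equivariant $\Spinc$-structures on slices $N$ and $G$-equivariant $\Spinc$-structures on $X = G\times_K N$ (via $TM \cong \mathrm{pr}_1^* T(G/K) \oplus \mathrm{pr}_2^* TN$ and the two-out-of-three property for $\Spinc$), and that this bijection is compatible with the equivalence relations on both sides, in particular with vector bundle modification where the dimension parity shifts. A secondary subtlety is that $K^{\textnormal{{geo,}} G}_{\bullet}(X)$ as defined here uses cycles that need not themselves be of the form $G\times_K N$ — they are arbitrary proper $G$-cocompact $\Spinc$-manifolds mapping to $X$ — so surjectivity of $\KInd^{\mathrm{geo}}$ genuinely requires the structure theory of such manifolds (they are all induced from slices up to $G$-diffeomorphism), rather than being a formality. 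Once these are in place, step (3) is a standard though not short index-theoretic computation, and the conclusion follows by the diagram chase described above.
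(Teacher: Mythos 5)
Your overall strategy is the paper's: build a square with geometric induction $i$ on the left, analytic induction $j$ on the right, the Baum--Douglas map on both rows, use Abels' slice theorem and the $\Spinc$-correspondence to show both verticals are isomorphisms, invoke \cite{BOSW} for the top row, and chase. There is, however, one concrete error in your construction. You define geometric induction by $(N,E,f)\mapsto (G\times_K N,\, G\times_K(S_{G/K}\otimes E),\, \mathrm{id}_G\times_K f)$, but the factor $S_{G/K}$ (the $K$-module $S$ coming from the lift $\widetilde{\Ad}\colon K\to\Spinc(\kp)$) must not appear in the twisting bundle. When $G\times_K N$ is given the $G$-equivariant $\Spinc$-structure obtained from the one on $N$ by stabilisation (the two-out-of-three correspondence you rightly identify as essential), its spinor bundle is already $G\times_K(S_N\otimes S)$, because $T(G\times_K N)\cong G\times_K(TN\oplus\kp)$ and the $\kp$-direction spinors are absorbed into $S_{G\times_K N}$. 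If you then twist the $\Spinc$-Dirac operator of $G\times_K N$ by $G\times_K(S_{G/K}\otimes E)$, the total coefficient bundle becomes $G\times_K(S_N\otimes S\otimes S\otimes E)$, which carries a spurious second copy of $S$. The square cannot commute with this map: on the analytic side $j$ sends $[D^N_E]$ to a cycle on $(L^2(G)\otimes L^2(S_N\otimes E)\otimes S)^K$, so your left vertical would differ from what is required by a twist by $[S]\in R(K)$. The correct geometric induction is simply $(N,E,f)\mapsto(G\times_K N,\, G\times_K E,\, \tilde f)$, which is what the paper uses.

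Apart from this, your outline agrees with the paper in substance: well-definedness by inducing each of the three relations; surjectivity of $i$ via Abels' theorem together with the $\Spinc$ stabilisation/destabilisation correspondence; injectivity via a version of Abels' theorem for manifolds with boundary applied to bordisms (the paper proves this separately, since the usual slice theorem does not directly give slices meeting the boundary correctly); and commutativity of the square as an adiabatic-limit/Dirac-induction identity. Your two flagged subtleties are exactly the ones the paper addresses. With the twisting bundle corrected to $G\times_K E$, your proposal becomes the paper's proof.
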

	
	\subsection{Overview of the Proof}
	
	We prove Theorem~\ref{thm:eq.K-homology} in several steps. Relevant to us will will be Abels' global slice theorem, which we recall presently.
	
	\begin{theorem}[Abels \cite{Abels}] 
		\label{thm:abels}
		Let $G$ be an almost-connected Lie group and $K$ a maximal compact subgroup of $G$. Then $X$ has a global $K$-slice, defined by
		\[
		Y=f^{-1}(eK) \subset X,
		\] 
		where $f: X\rightarrow G/K$ is a $G$-equivariant smooth map. 
		$Y$ is a $K$-invariant submanifold and $X$ is diffeomorphic to the associated space 
		\begin{equation}
		\label{eq Abels fibr}
		G\times_K Y:= G\times Y/\{(gh, y)\sim(g, hy), \forall h\in K\}.
		\end{equation}
	\end{theorem}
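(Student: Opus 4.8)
The plan is to reduce the theorem to the single assertion that a smooth proper (cocompact) $G$-manifold $X$ admits a smooth $G$-equivariant map $f\colon X\to G/K$; granting this, the slice $Y=f^{-1}(eK)$ and the diffeomorphism $X\cong G\times_K Y$ come out essentially formally. Fix a maximal compact subgroup $K\le G$. By the Cartan--Iwasawa--Malcev theorem $K$ is unique up to conjugacy, every compact subgroup of $G$ is subconjugate to $K$, and $G$ is diffeomorphic to $K\times\R^n$ with $G/K\cong\R^n$; thus $G/K$ is contractible and $q\colon G\to G/K$ is a principal $K$-bundle. Equip $G/K$ with a $G$-invariant (hence complete) Riemannian metric. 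The key input is that $G/K$ is a model for the classifying space $\underline{E}G$ for proper actions: for every compact $H\le G$ the fixed-point set $(G/K)^H=\{gK:g^{-1}Hg\subseteq K\}$ is nonempty (by subconjugacy) and contractible. The latter follows by identifying $(G/K)^H$ with $L/(L\cap K)$, where $L$ is the centralizer of $H$ in $G$, noting that $L\cap K$ is maximal compact in $L$, and applying Cartan--Iwasawa--Malcev to $L$ to see that this quotient is diffeomorphic to a Euclidean space.

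\textbf{Existence and smoothness of $f$.} By Palais' slice theorem $X$ is covered, and by cocompactness finitely covered, by tubes $U_i\cong G\times_{H_i}S_i$ with $H_i$ compact, and (by Illman) carries a $G$-CW structure with compact cell stabilizers. Since each $(G/K)^H$ is contractible, equivariant obstruction theory yields a continuous $G$-map $f_0\colon X\to G/K$, built over the equivariant skeleta by extending across cells $G/H\times D^k$ using $\pi_{k-1}((G/K)^H)=0$. One then replaces $f_0$ by a smooth $G$-homotopic map $f$: smooth $f_0$ on each tube $U_i$ (equivalently, equivariantly smooth the corresponding $H_i$-map $S_i\to G/K$, which is routine since $H_i$ is compact), keeping the approximations $C^0$-small, and patch via a $G$-invariant partition of unity subordinate to $\{U_i\}$ using Riemannian barycenters in $G/K$ --- legitimate here since the pieces being averaged differ by arbitrarily little, so one only uses local convexity of the metric (the injectivity radius is bounded below by cocompactness).

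\textbf{The slice and the diffeomorphism.} For $x\in X$ with $p=f(x)$, the $G$-orbit of $x$ maps onto $G\cdot p=G/K$, and the composite of the orbit map $g\mapsto gx$ with $f$ is the orbit map $g\mapsto g\cdot p$, which is a submersion $G\to G/K$; hence $df_x$ is already surjective on $T_x(G\cdot x)$, so $f$ is a submersion and $eK$ a regular value. Therefore $Y:=f^{-1}(eK)$ is a closed embedded $K$-invariant submanifold of codimension $\dim G/K$. Since $G/K$ is contractible, $q$ admits a smooth section $s\colon G/K\to G$. Define $\Phi\colon G\times_K Y\to X$, $\Phi[g,y]=gy$, and $\Psi\colon X\to G\times_K Y$, $\Psi(x)=[\,s(f(x)),\,s(f(x))^{-1}x\,]$; the point $s(f(x))^{-1}x$ lies in $Y$ because $f(s(f(x))^{-1}x)=s(f(x))^{-1}f(x)=eK$, and one checks directly that $\Phi$ and $\Psi$ are well defined, smooth, $G$-equivariant and mutually inverse. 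Hence $\Phi$ is a $G$-equivariant diffeomorphism, which is exactly the assertion of the theorem.

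\textbf{Main obstacle.} The crux is the structure-theoretic input, namely the contractibility of the fixed-point sets $(G/K)^H$, i.e. the identification of $G/K$ with $\underline{E}G$. One cannot circumvent this by a naive center-of-mass construction, since $G/K$ need not admit a $G$-invariant metric of non-positive curvature (already left-invariant metrics on the Heisenberg group fail this), so barycenters of spread-out measures are unavailable and one genuinely needs the conjugacy theory of compact subgroups of almost-connected Lie groups. Everything after the structure theory is routine: equivariant obstruction theory together with compact-group smoothing, a one-line orbit argument for the submersion property, and a formal check that a section of $q$ produces mutually inverse maps.
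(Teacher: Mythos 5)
The paper does not actually prove this statement --- it is quoted from Abels --- and the only place the argument surfaces is in the proof of Lemma \ref{lem:boundaryslice}, where the authors adapt Abels' original scheme: cover $X$ by Palais tubes $G\times_{H_i}S_i$, each of which admits a local $G$-map to $G/K$ because $H_i$ is subconjugate to $K$; pass to a $\sigma$-discrete refinement of the induced cover of the paracompact quotient $X/G$; and glue the local $G$-maps using Abels' equivariant combination procedure on $G/K$. Your route is genuinely different: you build the $G$-map by equivariant obstruction theory over an Illman $G$-CW structure, using that $G/K$ is a model for the classifying space for proper actions, and then smooth. Everything downstream of the existence of $f$ --- the orbit-map argument that $f$ is a submersion, the regular-value description of $Y$, and the construction of the inverse of $[g,y]\mapsto gy$ from a smooth section of $G\to G/K$ --- is correct and matches what the paper does in Lemma \ref{lem:boundaryslice}.

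The gap is that the entire difficulty of Abels' theorem is concentrated in your key lemma that $(G/K)^H$ is contractible for every compact $H\le G$, and your proof of that lemma does not go through as written. Identifying $(G/K)^H$ with $L/(L\cap K)$ for $L=Z_G(H)$ presupposes (i) that $Z_G(H)$ acts transitively on $(G/K)^H$ --- equivalently, that whenever $H\subseteq K$ and $g^{-1}Hg\subseteq K$, the conjugation $c_g|_H$ is induced by some element of $K$ --- and (ii) that $Z_G(H)$ is almost connected with $Z_G(H)\cap K$ maximal compact in it. Neither is justified, and (i) is a strengthening of the conjugacy theorem of essentially the same depth as the statement you are proving. (For $G$ semisimple one gets contractibility more directly: $(G/K)^H$ is a totally geodesic, hence convex, subset of a Hadamard manifold. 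For general almost-connected $G$ one must work through the radical, which is precisely what Abels' $G$-equivariant combination map $\mu\colon G/K\times G/K\times[0,1]\to G/K$ accomplishes; note that $\mu$ restricts to $H$-fixed sets and so also yields their contractibility, so the two ``hard inputs'' are really the same.) Your architecture is therefore viable, but it relocates rather than removes the structural input: you should either quote the universality of $G/K$ as a proper classifying space (Abels' later work) or supply an actual proof of the fixed-point contractibility.
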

	
	The associated space \eqref{eq Abels fibr} is a fibre bundle over $G/K$ with fibre the manifold $Y.$ The $G$-equivariant diffeomorphism $G\times_K Y\rightarrow X$ is given by $[(g, y)]\mapsto g\cdot y$, where $[(g, y)]$ denotes the equivalence class of the pair $(g, y)\in G\times Y$ in the quotient $G\times_K Y.$
	
	\begin{remark}\label{rem:Slice.unique}	
		In the proofs that follow, we will make frequent and essential use of the fact that the slice $Y$ in Abels' theorem is {\em essentially unique} in a rather strong sense, namely the $K$-diffeomorphism class of the slice $Y$ depends only upon the $G$-diffeomorphism class of the $G$-action on $X$ (for the proof  of this fact see Theorem 2.2 in the paper of Abels \cite{Abels} and the first paragraph of page containing it). In particular, this means that for a fixed $G$-action on $X$, the slice $Y$ is unique up to $K$-diffeomorphism.
	\end{remark}
	
	Let $d := \dim G/K$. In Section~\ref{sec:Induction on geometric K-homology}, we show carefully that that Abels' theorem can be used to write down a well-defined ``induction map" on equivariant geometric $K$-homology,
	\[
	i: K^{\textnormal{{geo,}} K}_{\bullet}(Y)\rightarrow K^{\textnormal{{geo,}} G}_{\bullet+d}(X),
	\]
	and show it is an isomorphism by slightly extending Abels' theorem. In Section~\ref{sec:Induction on analytic K-homology} we define a corresponding induction map on analytic $K$-homology, which we prove is an isomorphism using tools from $KK$-theory.
	\[
	j: K^K_{\bullet}(Y)\rightarrow K^G_{\bullet+d}(X).
	\]
	Finally, we recall the following key theorem of Baum, Oyono-Oyono, Schick and Walter, which shows that equivariant geometric and analytic $K$-homologies are equivalent in the case of compact group actions (note that any smooth manifold admits a CW-complex structure):
	
	\begin{theorem}[\cite{BOSW}]
		\label{thm:BO2SW}
		Let $Y$ be a compact CW-complex with an action of a compact Lie group $K$. Then the Baum-Douglas map is an isomorphism
		\[
		K^{\textnormal{{geo,}} K}_{\bullet}(Y)\simeq K^K_{\bullet}(Y).
		\]
	\end{theorem}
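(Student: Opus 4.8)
\textbf{Proof proposal for Theorem \ref{thm:BO2SW}.}

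The plan is to reduce the equivariant statement for a compact Lie group $K$ to the known non-equivariant case of Baum--Higson--Schick \cite{BaumHigsonSchick}, via a standard induction-and-devissage argument over the equivariant cells of $Y$, followed by an equivariant Mayer--Vietoris comparison. First I would recall that both $K^{\textnormal{geo},K}_\bullet(-)$ and $K^K_\bullet(-)$ are functorial on finite $K$-CW-complexes, that the Baum--Douglas map $BD$ is a natural transformation between them, and that both are half-exact, admitting Mayer--Vietoris sequences for $K$-invariant open (or closed-cofibred) decompositions $Y = Y_1 \cup Y_2$; for the geometric side this uses that bordism and vector-bundle modification are compatible with the obvious restriction/excision maps, and for the analytic side it is Kasparov's long exact sequence. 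Since $BD$ commutes with the connecting maps of these two Mayer--Vietoris sequences (naturality of the Baum--Douglas construction under inclusions, which one checks on cycles), the five lemma reduces the theorem to the case where $Y$ is a single equivariant cell, i.e.\ $Y = K/H \times D^n$ with $H \leq K$ closed, equivariantly contracting to the orbit $K/H$.

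Next I would treat the orbit case $Y = K/H$. By the induction isomorphism in equivariant $K$-homology --- both analytic and geometric --- one has $K^K_\bullet(K/H) \simeq K^H_\bullet(\mathrm{pt})$ and $K^{\textnormal{geo},K}_\bullet(K/H) \simeq K^{\textnormal{geo},H}_\bullet(\mathrm{pt})$, compatibly with $BD$. (On the geometric side this is the statement that a cycle $(M,E,f)$ over $K/H$ with $f$ necessarily the projection to the single orbit is, up to the three relations, the same as an $H$-equivariant cycle over a point, obtained by restricting to the fibre $f^{-1}(eH)$; on the analytic side it is the standard induction isomorphism for proper actions, which here is an honest induced representation since $K/H$ is compact.) So it suffices to prove that $BD : K^{\textnormal{geo},H}_\bullet(\mathrm{pt}) \to K^H_\bullet(\mathrm{pt})$ is an isomorphism for every compact Lie group $H$. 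By Atiyah--Segal / the Green--Julg theorem, $K^H_\bullet(\mathrm{pt}) \simeq KK_\bullet^H(\C,\C) \simeq R(H)$ in even degree and $0$ in odd degree. On the geometric side, a cycle over a point is a closed $H$-manifold $M$ with an $H$-equivariant $\Spin^c$-structure together with a bundle $E$; its Baum--Douglas image is the class of the twisted $\Spin^c$-Dirac operator, whose $H$-index in $R(H)$ one computes by the equivariant Atiyah--Singer index theorem. Surjectivity is immediate since the point-bundle $(\mathrm{pt}, V, \mathrm{id})$ for an irreducible $H$-representation $V$ maps to $[V] \in R(H)$; injectivity is the genuinely substantive input and follows from the equivariant version of the bordism-invariance argument of Baum--Higson--Schick --- one shows any cycle with vanishing $H$-index is, after vector-bundle modification, equivariantly $\Spin^c$-null-bordant --- or, alternatively, by citing the equivariant geometric $K$-homology computation for homogeneous spaces already implicit in \cite{BaumHigsonSchick.eq, BOSW}.

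The remaining point is to run the cell induction: order the equivariant cells of $Y$ by dimension, let $Y^{(k)}$ be the $k$-skeleton, and compare the pairs $(Y^{(k)}, Y^{(k-1)})$. Excision plus the cell case handles the relative term, the inductive hypothesis handles $Y^{(k-1)}$, and the five lemma applied to the map of long exact sequences induced by $BD$ gives the isomorphism for $Y^{(k)}$; since $Y$ is finite-dimensional this terminates. I expect the main obstacle to be establishing the Mayer--Vietoris sequence for equivariant \emph{geometric} $K$-homology with enough naturality that $BD$ is a genuine morphism of long exact sequences --- in particular producing a well-defined relative geometric group $K^{\textnormal{geo},K}_\bullet(Y, Y')$ for a $K$-subcomplex $Y' \subset Y$ and a connecting homomorphism compatible with the analytic boundary map. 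This is exactly the technical heart of \cite{BaumHigsonSchick} in the non-equivariant setting, and its equivariant adaptation for compact $K$ (using that $K$-CW-complexes have $K$-invariant collar neighbourhoods and that bordism/vector-bundle-modification are local over $Y$) is where the real work lies; everything else is formal once that is in place. In practice, rather than rebuild this machinery, one invokes \cite{BOSW} directly --- which is precisely what the theorem as stated does.
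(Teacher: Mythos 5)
The paper does not actually prove this statement: Theorem~\ref{thm:BO2SW} is imported verbatim from \cite{BOSW} and used as a black box (it is one of the three inputs, together with the induction isomorphisms $i$ and $j$, in the proof of Theorem~\ref{thm:eq.K-homology}). So there is no in-paper argument to compare against; the relevant comparison is with \cite{BOSW} itself. Your outline --- reduce by equivariant cell induction and the five lemma to orbits $K/H$, then by induction from $H$ to $K$ to the point case $K^{\textnormal{geo},H}_{\bullet}(\mathrm{pt})\to R(H)$ --- is indeed the right overall shape and is close to how \cite{BOSW} (and \cite{BaumHigsonSchick}) proceed. But as a proof it is not complete, and the two places you defer are precisely where all the content lies.

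First, the Mayer--Vietoris/long-exact-sequence structure on the \emph{geometric} theory is not a routine check: one must construct relative groups $K^{\textnormal{geo},K}_{\bullet}(Y,Y')$, a boundary map, and prove exactness, and the hard point (the ``technical theorem'' of \cite{BaumHigsonSchick}, whose equivariant analogue occupies much of \cite{BOSW}) is showing that a cycle whose class dies under the map of pairs is actually accounted for by a relative bordism after vector bundle modification. Second, your orbit reduction $K^{\textnormal{geo},K}_{\bullet}(K/H)\simeq K^{\textnormal{geo},H}_{\bullet}(\mathrm{pt})$ by restricting a cycle $(M,E,f)$ to $f^{-1}(eH)$ silently uses the two-out-of-three reduction of $\Spinc$-structures along $TM\cong T(f^{-1}(eH))\oplus f^*T(K/H)$, which requires an $H$-equivariant $\Spinc$-structure on $\kk/\kh$; this is \emph{not} automatic for an arbitrary closed $H\le K$ (it is exactly the issue that forces the hypothesis ``$G/K$ is $G$-$\Spinc$'' elsewhere in this paper), and one must stabilise, e.g.\ by a vector bundle modification along $\kk/\kh\oplus\kk/\kh$, which carries a canonical complex (hence $\Spinc$) structure. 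Finally, injectivity at a point --- that an $H$-equivariant $\Spinc$-cycle with vanishing index in $R(H)$ is equivariantly null-bordant up to modification --- is the genuinely substantive computation $K^{\textnormal{geo},H}_{0}(\mathrm{pt})\simeq R(H)$, $K^{\textnormal{geo},H}_{1}(\mathrm{pt})=0$, and cannot be waved through as ``the equivariant version of'' the non-equivariant argument. You correctly identify all of this yourself in your last paragraph; the honest conclusion is that your proposal is a faithful roadmap of \cite{BOSW} rather than an independent proof, which is consistent with the paper's decision simply to cite that reference.
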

	
	This isomorphism and the isomorphisms $i$ and $j$ fit into the following diagram:
	\begin{equation}
	\label{eq:comm.K.homo}
	\begin{CD}
	K^{\textnormal{{geo,}} K}_{\bullet}(Y) @>BD>> K^K_{\bullet}(Y)\\
	@ViVV @VjVV\\
	K^{\textnormal{{geo,}} G}_{\bullet+d}(X) @>BD>> K^G_{\bullet+d}(X),
	\end{CD}
	\end{equation} 
	where the bottom arrow is the $G$-equivariant Baum-Douglas map we defined previously. The final step is to show that this diagram commutes (Section~\ref{sec:Commutativity of the diagram}). From this Theorem~\ref{thm:eq.K-homology} follows.
	
	\begin{remark}
	The assumption that $G/K$ is $G$-$\Spinc$ is only used in proving Proposition~\ref{lem:iso.geo.ind}.
	\end{remark}

	\subsection{Induction on Geometric $K$-homology}
	\label{sec:Induction on geometric K-homology}
	
	We define an induction map on equivariant geometric $K$-homology and show that, when $G/K$ has a $G$-invariant $\Spinc$-structure, it is an isomorphism. Before doing so, we need to make an important remark about the relationship between $\Spinc$-structures on a $G$-manifold $M$ and a given $K$-slice $N$.
	\begin{remark}
		\label{rem:Spinc.stru.ind}
		In \cite{HochsMathaiAJM} Section 2.3, in particular Definition 2.7, a procedure called {\it stabilisation} is given to define a $G$-equivariant $\Spinc$-structure starting from a $K$-equivariant $\Spinc$-structure on $N$, together with a procedure called {\it destabilisation} going the other way, both based on Plymen's two-out-of-three lemma. Further, Lemma 3.9 of the same paper proves that stabilisation and destabilisation are inverses of one another. Thus there is a one-to-one correspondence between the collection of $G$-$\Spinc$-structures on $M$ and the collection of $K$-$\Spinc$-structures on $N$. We note that a similar correspondence holds when $M$ and $N$ have boundary (see also. In the rest of this paper, whenever we speak of a ``corresponding" or ``compatible" $G$-$\Spinc$-structure or $K$-$\Spinc$ structure given the other, we shall be doing so with this correspondence in mind.
	\end{remark}
	\begin{definition}\label{def:geometricinduction}
	Let $G, X, K$ and $Y$ be as before. The {\em geometric induction map} is given by
	$$i: K^{\textnormal{{geo,}} K}_{\bullet}(Y)\rightarrow K^{\textnormal{{geo,}} G}_{\bullet+d}(X),$$
	$$ [(N, E, f)]\mapsto [(G\times_K N, G\times_K E, \tilde f)]=:[(M, \tilde E, \tilde f)].$$
	Here $G\times_K N$ is equipped with the corresponding $\Spinc$-structure (in the sense of the remark above), while the map $f: M = G\times_K N\rightarrow G\times_K Y \cong X$ is the natural $G$-equivariant map determined by the $K$-equivariant map $f: N\rightarrow Y$ on the fibre. The vector bundle $G\times_K E$ can be defined as the pullback bundle $pr_2^*E\rightarrow G\times Y$ of $E$ along the projection $pr_2:G\times Y\rightarrow Y$ modulo the $K$-action on $G\times E$ given by $k(g,e) = (gk^{-1},ke)$, and equipped with the pull-back Hermitian metric.
	\end{definition}
	\begin{proposition}
	The map $i$ is a well-defined homomorphism of abelian groups.
	\end{proposition}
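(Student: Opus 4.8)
The plan is to establish three things in turn: that the triple $i(N,E,f)=(G\times_K N,\,G\times_K E,\,\tilde f)$ is a genuine geometric $G$-cycle for $X$; that $i$ descends through the three Baum--Douglas relations (direct sum/disjoint union, bordism, vector bundle modification) defining $K^{\textnormal{{geo,}} K}_{\bullet}(Y)$; and that $i$ is additive. The organising observation is that the associated-bundle functor $G\times_K(-)$ commutes with essentially every geometric construction that appears here --- disjoint union, direct sum of Hermitian bundles, passage to the boundary, $G$-equivariant collar neighbourhoods, fibrewise sphere bundles, Bott bundles --- and that the stabilisation/destabilisation correspondence of Remark~\ref{rem:Spinc.stru.ind} is natural with respect to all of these.

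First I would check that $i(N,E,f)$ is a cycle. Since $K$ is compact it acts freely and properly on $G\times N$, so $M:=G\times_K N$ is a smooth manifold; the residual left $G$-action is smooth, its isotropy group at $[g,n]$ is the $G$-conjugate $g\,\Stab_K(n)\,g^{-1}$ of a closed subgroup of $K$ and hence compact, and the action is proper because the $G$-equivariant projection $M\to G/K$ has compact fibres (the slice $N$, which is compact as it is $K$-cocompact) and covers the proper action of $G$ on $G/K$; likewise $M/G\cong N/K$ is compact, so $M$ is $G$-cocompact. I equip $M$ with the $G$-equivariant $\Spinc$-structure corresponding under Remark~\ref{rem:Spinc.stru.ind} to the given $K$-equivariant $\Spinc$-structure on $N$, noting that $\dim M=\dim N+d$ so the degree shift in the definition of $i$ is the correct one. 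Finally $G\times_K E$ is smooth, $G$-equivariant and Hermitian by its construction in Definition~\ref{def:geometricinduction}, and $\tilde f\colon[g,n]\mapsto[g,f(n)]$ is a well-defined continuous $G$-equivariant map $M\to G\times_K Y\cong X$.

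Next I would verify that $i$ respects the three relations. For direct sum/disjoint union this is immediate from $G\times_K(N_1\sqcup N_2)=(G\times_K N_1)\sqcup(G\times_K N_2)$ and $G\times_K(E_1\oplus E_2)=(G\times_K E_1)\oplus(G\times_K E_2)$. For bordism, given a compact $K$-$\Spinc$-bordism $(W,E_W,F_W)$ from $(N_1,E_1,f_1)$ to $(N_2,E_2,f_2)$ with $G$-equivariant collars at its ends, I would check that $G\times_K W$, equipped with the $\Spinc$-structure corresponding to that of $W$, is a $G$-cocompact proper $G$-manifold whose boundary with induced data is $i(N_1,E_1,f_1)\sqcup(-i(N_2,E_2,f_2))$, the needed $G$-equivariant collars being $G\times_K(-)$ of those of $W$ --- these exist for proper actions by \cite{Kankaanrinta} --- so that $(G\times_K W,\,G\times_K E_W,\,\widetilde{F_W})$ is a bordism realising $i(N_1,E_1,f_1)\sim i(N_2,E_2,f_2)$. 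For vector bundle modification by a $K$-equivariant even-rank $\Spinc$-bundle $V\to N$, I would use that $G\times_K(-)$ commutes with the fibrewise sphere bundle $S(V\oplus\underline{\R})$ and with trivial summands, so $G\times_K(N^V)=(G\times_K N)^{G\times_K V}$, and that the attendant $\Spinc$-structure and the Bott bundle producing the modified coefficient bundle likewise correspond under $G\times_K(-)$; hence $i$ takes a modification to a modification. Additivity of $i$ then follows at once, addition in both groups being disjoint union.

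The step I expect to be the main obstacle is the $\Spinc$-bookkeeping. One must confirm that the stabilisation/destabilisation correspondence of Remark~\ref{rem:Spinc.stru.ind} --- equivalently the two-out-of-three constructions of \cite{HochsMathaiAJM} --- is natural not only under the associated-bundle construction but also under restriction to a boundary, under orientation reversal (so that the boundary component of $G\times_K W$ coming from $-N_2$ is genuinely $-i(N_2,E_2,f_2)$), and under the sphere-bundle construction entering vector bundle modification; one must also check that the $G$-equivariant collars presenting $G\times_K W$ as a bordism can be taken in the form $G\times_K(-)$. Once these naturality statements are in place, each remaining verification is a routine unwinding of the definitions of \cite{BaumHigsonSchick.eq}.
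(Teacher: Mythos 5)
Your proposal is correct and follows essentially the same route as the paper's proof: verify that $i$ sends cycles to cycles and then check compatibility with disjoint union/direct sum, bordism, and vector bundle modification one at a time, using that $G\times_K(-)$ commutes with disjoint unions, direct sums, boundaries, sphere bundles, and the Bott construction. You are somewhat more explicit than the paper in checking the propriety, cocompactness, and compact-isotropy conditions that make $(G\times_K N, G\times_K E, \tilde f)$ a genuine $G$-cycle, and in flagging (without fully discharging) the naturality of the stabilisation/destabilisation correspondence under boundaries, orientation reversal, and sphere bundles --- a point the paper also treats lightly at this stage, deferring the more delicate version (via doubling) to the injectivity part of the next proposition.
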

	\begin{proof}
	One sees immediately that $(M, \tilde E, \tilde f)$ satisfies the requirements of a geometric cycle. It is also easy to verify that the map $i$ is well-defined with respect to the disjoint union/direct sum relation. To see that $i$ is well-defined with respect to bordism, suppose that $(N_1,E_1,f_1)$ and $(N_2,E_2,f_2)$ are bordant cycles via a triple $(S,E,f)$. The main point is that $G\times_K N_1$ and $-G\times_K N_2$ are bordant in the following natural way. By hypothesis, we have $\partial S = N_1\sqcup -N_2$. Since $S$ is a $K$-manifold with boundary, and $K$ acts by self-diffeomorphisms of $S$, the $K$-action separates into two parts: an action on the boundary and an action on the interior of $S$. Therefore $G\times_K S$ is a $G$-manifold with boundary $G\times_K\partial S$ and is equal to the disjoint union of $G\times_K N_1$ and $-G\times_K N_2$. The latter two manifolds are bordant via $G\times_K S$. One then verifies that the vector bundle $G\times_K E$ restricts to $G\times_K E_1$ and $G\times_K E_2$ at the ends of the bordism, and similarly that $\tilde{f}$ restricts to $\tilde{f}_1$ and $\tilde{f}_2$.
	
	Next, suppose that $(N_1,E_1,f_1)$ and $(N_2,E_2,f_2)$ are related by a vector bundle modification. In particular $N_2$ is the total space of the unit sphere sub-bundle of $V\oplus(N_1\times\mathbb{R})\rightarrow N_1$, for some $G$-$\Spinc$-vector bundle $V$ over $N_1$ with fibres of even dimension $2k$. Denote the $G$-$\Spinc$-structure of $V$ by $P_V\rightarrow N_1$. By the definition given on p. 6 of \cite{BaumHigsonSchick.eq}, we have
$$E_2 = (P_V\times_{\Spinc}\beta)\otimes\pi^*(E_1),$$ where $\beta$ is the Bott generator vector bundle over $S^{2k}$ and $\pi:N_2\rightarrow N_1$ is the projection map for the sphere bundle. Observe that $G\times_K N_2$ is just the unit sphere sub-bundle of $G\times_K V\oplus (G\times_K N_2\times\mathbb{R})$, which is still a direct sum of an even-rank vector bundle with the trivial real one-dimensional bundle over $G\times_K N_2$. By inspection, the induced vector bundle $G\times_K E_2 = G\times_K(P_V\times_{\Spinc}\beta)$ over $G\times_K N_2$ is still $\beta$ when restricted over each spherical fibre of $G\times_K N_2$. It follows that the induced cycles $(G\times_K N_1,G\times_K E_1,\tilde{f_1})$ and $(G\times_K N_2,G\times_K E_2,\tilde{f_2})$ are still related by a vector bundle modification, so $i$ is also well-defined with respect to this operation. 

	Finally, since $i$ preserves disjoint unions, it is a homomorphism of abelian groups.
	\end{proof}

	Having seen that it is well-defined, let us show that $i$ is an isomorphism. First we need to establish the following variant of Abels' global slice theorem for manifolds with boundary.
	\begin{lemma}\label{lem:boundaryslice}
	Let $G,K$ be as above, and let $W$ be a proper $G$-manifold with boundary. Then there exists a global $K$-slice $S\subseteq W$ - that is, a $K$-submanifold with boundary such that we have a $G$-equivariant diffeomorphism $W\cong G\times_K S$ with $\partial S = S\cap\partial W$.
	\end{lemma}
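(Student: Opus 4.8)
The plan is to reduce Lemma~\ref{lem:boundaryslice} to the boundaryless case (Theorem~\ref{thm:abels}) by doubling $W$ along its boundary. First I would form the double $\widehat{W} := W \cup_{\partial W} W$, which is a proper $G$-manifold without boundary, carrying a natural $G$-action and an involution $\sigma$ swapping the two copies, with fixed-point set $\partial W$. Applying Abels' theorem to $\widehat{W}$ produces a $G$-equivariant smooth map $\widehat{f}: \widehat{W} \to G/K$ and a global $K$-slice $\widehat{S} = \widehat{f}^{-1}(eK)$ with $\widehat{W} \cong G \times_K \widehat{S}$. The key point is to arrange that $\widehat{f}$ can be chosen $\sigma$-invariant: one way is to average an arbitrary such map over the finite group $\{1,\sigma\}$, but since $\widehat{f}$ takes values in $G/K$ rather than a linear space, I would instead average at the level of the Riemannian-submersion/Cartan-type construction used in Abels' proof, or equivalently invoke the functoriality/uniqueness in Remark~\ref{rem:Slice.unique} to see that the slice for $\widehat{W}$ can be taken to respect the $\sigma$-symmetry. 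Granting this, $\widehat{S}$ is $\sigma$-stable, and I would set $S := \widehat{S} \cap W$.

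Next I would verify that $S$ has the required properties. Since $\widehat{f}$ is $\sigma$-invariant and $\sigma$ restricts to the identity on $\partial W$, the map $\widehat{f}$ descends to a $G$-equivariant map $f: W \to G/K$ (namely the restriction of $\widehat{f}$ to either copy, which agree on $\partial W$), and $S = f^{-1}(eK)$. Transversality of $\widehat{f}$ to $eK$ along $\partial W$ — which we may assume, perturbing $\widehat{f}$ $G$- and $\sigma$-equivariantly if necessary, using that $\partial W$ is $G$-cocompact in the cocompact case and otherwise a standard equivariant transversality argument — ensures that $S$ is a $K$-submanifold with boundary and that $\partial S = S \cap \partial W = \widehat{S} \cap \partial W = \widehat{f}^{-1}(eK) \cap \partial W$. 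The $G$-equivariant diffeomorphism $W \cong G \times_K S$ is then obtained by restricting the diffeomorphism $\widehat{W} \cong G \times_K \widehat{S}$: the collar structure guarantees that $G \times_K \widehat{S}$ splits along $G \times_K \partial S$ into two halves, each $G$-equivariantly diffeomorphic to $G \times_K S$, and the half corresponding to one copy of $W$ inside $\widehat{W}$ maps onto $W$.

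An alternative, which avoids doubling, is to use a $G$-invariant collar $\partial W \times [0,1) \hookrightarrow W$ (whose existence for proper actions is the content of \cite{Kankaanrinta} Theorem 3.5, already cited above) to first handle the collar region: apply Abels' theorem to the boundaryless proper $G$-manifold $\mathrm{int}(W)$ to get a slice $S^\circ$, apply it also to $\partial W$ to get a slice $S_\partial \subseteq \partial W$, and then glue, matching $S^\circ$ near the boundary with $S_\partial \times [0,1)$; the essential-uniqueness statement of Remark~\ref{rem:Slice.unique} is what makes the two choices compatible up to $K$-diffeomorphism, so the gluing can be carried out. I expect the main obstacle to be precisely this matching/equivariant-transversality step: one must produce the defining map $f: W \to G/K$ so that it is simultaneously a submersion onto a neighbourhood of $eK$ (giving the slice structure in the interior) and restricts to a corresponding map on $\partial W$ with $\partial S = S \cap \partial W$, and verifying that these two conditions can be met at once — rather than the formal bookkeeping with $G \times_K(-)$, which is routine — is where the real work lies.
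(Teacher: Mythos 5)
Your doubling strategy is genuinely different from the paper's. The paper does not double: it extends Abels' original construction of the classifying map $f:W\to G/K$ directly to manifolds with boundary, using Palais' local slice theorem at interior points and an adaptation of Palais' argument at boundary points, then patching the local $G$-maps together via a $\sigma$-discrete refinement of a cover of $W/G$, exactly as in Abels' boundaryless proof. The slice is then $f^{-1}(eK)$, and the fact that $\partial S = S\cap\partial W$ follows from $eK$ being a regular value of both $f$ and $f|_{\partial W}$. (The paper does use a doubling argument a bit later, in the proof of Proposition~\ref{lem:iso.geo.ind}, but there it serves a different purpose: transferring $\Spinc$-structures, not producing the slice itself.)

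Your approach is salvageable, but the central step — arranging that the Abels map $\widehat f:\widehat W\to G/K$ is $\sigma$-invariant — is not established by either of the devices you propose. Averaging a $G/K$-valued map over $\{1,\sigma\}$ is not meaningful as stated, and Remark~\ref{rem:Slice.unique} only gives an abstract $K$-diffeomorphism between $\widehat S$ and $\sigma(\widehat S)$; it does not produce a $\sigma$-invariant defining map or even make the two slices coincide as subsets. The clean fix is to observe that $\sigma$ commutes with the $G$-action, so $G\times\Z/2$ acts properly on $\widehat W$; it is still almost-connected, with maximal compact subgroup $K\times\Z/2$ and $(G\times\Z/2)/(K\times\Z/2)\cong G/K$, so Abels' theorem applied to $G\times\Z/2$ hands you a $(G\times\Z/2)$-equivariant $\widehat f:\widehat W\to G/K$, which is exactly $G$-equivariance plus $\sigma$-invariance. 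Once that is in place, the transversality worry you raise actually disappears on its own: differentiating $\widehat f\circ\sigma=\widehat f$ at a point of $\partial W=\mathrm{Fix}(\sigma)$ shows $d\widehat f_p$ annihilates the normal direction, so $d\widehat f_p$ and $d(\widehat f|_{\partial W})_p$ have the same image, and $eK$ is a regular value of $\widehat f|_{\partial W}$ whenever it is one of $\widehat f$ (which it is, since $\widehat f$ is the projection of a fibre bundle over $G/K$). Your alternative (collar gluing, matching slices of $\mathrm{int}(W)$ and $\partial W$ via Remark~\ref{rem:Slice.unique}) has the same flavour of gap: the remark gives abstract $K$-diffeomorphisms, not compatible embeddings, so the gluing is not automatic; the paper's direct construction of $f$ on all of $W$ sidesteps this matching problem entirely.
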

	\begin{proof}
	First we make the claim that there exists a $G$-equivariant smooth map $f:W\rightarrow G/K$. We sketch its proof, mainly noting where it differs from the case proved in \cite{Abels} p. 8. Let $\pi:W\rightarrow W/G$ be the natural projection. It can be shown that there exists for each $w\in W$ a $G$-invariant neighbourhood $U_w$ that admits a local $K$-slice. Indeed, if $w$ is in the interior of $W$, this follows from Proposition 2.2.2. in \cite{Palais}, and if $w\in\partial W$ is a boundary point, we can proceed along the lines of Palais in \cite{Palais} Section 2 to obtain $U_w$. Note that the $G$-map $f_{{U}_w}:U_w\rightarrow G/K$ extends to a slightly larger open set containing $\overline{U}_w$, so we get smooth $G$-maps $f_{\overline{U}_w}:\overline{U}_w\rightarrow G/K$. Define the cover
	$$\mathcal{U}:=\{U_w:w\in W\}$$
	of $W$ constructed out of these $U_w$. Since $W/G$ is paracompact, there is an open $\sigma$-discrete refinement of the cover $\pi(\mathcal{U})=\{\pi(U):U\in\mathcal{U}\}$ of $\pi(W)$. That is, there is a sequence $\mathscr{U}_n$, indexed by $n\in\mathbb{N}$, of families of open subsets of $W/G$ such that $\bigcup_{n=1}^\infty(\mathscr{U}_n)$ is a cover of $W/G$ that refines $\pi(\mathcal{U})$, and every family $\mathscr{U}_n$ is discrete (see \cite{Abels} p. 6). 
	
	One can then show, as on p.8 of \cite{Abels}, that the set of restricted $G$-maps $f_{\overline{A}}:=f|_{\overline{A}}:\pi^{-1}(\overline{A})\rightarrow G/K$, with $A$ ranging over the elements of $\mathcal{U}$, piece together to give a composite smooth $G$-map $f:\bigcup_{A\in\mathcal{U}}\pi^{-1}(\overline{A})\rightarrow G/K$. The rest of the construction goes as in \cite{Abels} and yields the desired map $f:W\rightarrow G/K$.

	One observes, by Palais' local slice theorem \cite{Palais} or otherwise, that $eK$ is a regular value for both $f$ and $f|_{\partial W}$. Hence $f^{-1}(eK)=S\subseteq W$ is a submanifold with boundary $\partial S = S\cap\partial W$ (for a proof of this fact see for example \cite{Milnor}). The map $G\times_K S\rightarrow W$ given by $[g,s]\mapsto g\cdot s$ provides a $G$-equivariant diffeomorphism.
	\end{proof}

	\begin{proposition}
		\label{lem:iso.geo.ind}
		Let $G,X,K$ and $Y$ be as before. If the homogeneous manifold $G/K$ has a $G$-equivariant $\Spinc$-structure, the map $i$ is an isomorphism.
	\end{proposition}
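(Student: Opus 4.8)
The plan is to build a two-sided inverse to $i$ directly on $K$-homology classes, with Abels' slice theorem and the stabilisation/destabilisation correspondence of Remark~\ref{rem:Spinc.stru.ind} doing the work. Given a geometric $G$-cycle $(M,\widetilde E,\widetilde f)$ for $X$, apply Theorem~\ref{thm:abels} to the proper $G$-cocompact manifold $M$ to obtain a smooth $G$-map $p\colon M\to G/K$ with $N:=p^{-1}(eK)$ a compact $K$-slice and a $G$-diffeomorphism $M\cong G\times_K N$ under which $p$ is the bundle projection. Composing $\widetilde f$ with the projection $\pi_X\colon X\cong G\times_K Y\to G/K$ from \eqref{eq Abels fibr} yields another $G$-map $M\to G/K$; since $G/K$ is $K$-equivariantly contractible (for almost-connected $G$ it is $K$-equivariantly diffeomorphic to a Euclidean space with linear $K$-action), the restriction $(\pi_X\circ\widetilde f)|_N\colon N\to G/K$ is $K$-homotopic to the constant map $eK$, and lifting this homotopy through the $K$-equivariant fibration $X\to G/K$ gives a $G$-homotopy replacing $\widetilde f$ by a bundle map over $G/K$ --- which does not alter the class of the cycle. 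Now $\widetilde f=G\times_K f$ for the $K$-map $f:=\widetilde f|_N\colon N\to\pi_X^{-1}(eK)=Y$, and $\widetilde E\cong G\times_K E$ for $E:=\widetilde E|_N$. Finally, \emph{destabilise} the given $G$-equivariant $\Spinc$-structure on $M$ to a $K$-equivariant $\Spinc$-structure on $N$; this is the only point at which the hypothesis that $G/K$ is $G$-$\Spinc$ is used. Declare $i^{-1}([M,\widetilde E,\widetilde f]):=[N,E,f]$.

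The second step is to check that $i^{-1}$ descends to a well-defined homomorphism $K^{\textnormal{{geo,}} G}_{\bullet+d}(X)\to K^{\textnormal{{geo,}} K}_{\bullet}(Y)$. That $(N,E,f)$ is a bona fide compact $K$-cycle is clear, and $N$ is unique up to $K$-diffeomorphism by Remark~\ref{rem:Slice.unique}, so the class $[N,E,f]$ is unambiguous; independence of the homotopy used to turn $\widetilde f$ into a bundle map follows likewise (or from bordism-invariance below). Compatibility with disjoint union/direct sum is immediate. For bordism, given a $G$-bordism $W$ between two of the manifolds $G\times_K N_i$ produced above, apply the boundary version of Abels' theorem (Lemma~\ref{lem:boundaryslice}) to $W$ to obtain a $K$-slice $S\subseteq W$ with $\partial S=S\cap\partial W$; restricting the bundle and reference map to $S$ and destabilising the $\Spinc$-structure of $W$ yields a $K$-bordism of the corresponding $K$-cycles. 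Compatibility with vector bundle modification is the computation already made in the proof that $i$ is well-defined, read in reverse: passing to the $K$-slice commutes with forming the unit sphere bundle of a $\Spinc$-vector bundle, with the Bott bundle construction, and with pullbacks.

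It then remains to verify that $i\circ i^{-1}$ and $i^{-1}\circ i$ are identities. For $i^{-1}\circ i$: given a $K$-cycle $(N,E,f)$, the cycle $i([N,E,f])=(G\times_K N,G\times_K E,G\times_K f)$ has $\pi_X\circ(G\times_K f)$ equal to the bundle projection, so its $K$-slice is canonically $N$ and the restrictions of $G\times_K E$ and $G\times_K f$ recover $E$ and $f$; moreover destabilising the stabilised $\Spinc$-structure returns the original one, since stabilisation and destabilisation are mutually inverse (\cite{HochsMathaiAJM}, Lemma~3.9). Hence $i^{-1}(i([N,E,f]))=[N,E,f]$. For $i\circ i^{-1}$: the construction of $i^{-1}([M,\widetilde E,\widetilde f])=[N,E,f]$ came equipped with a $G$-diffeomorphism $M\cong G\times_K N$ carrying $\widetilde E$ to $G\times_K E$ and (after a homotopy) $\widetilde f$ to $G\times_K f$, and with the $G$-$\Spinc$-structure of $M$ equal to the stabilisation of the chosen $K$-$\Spinc$-structure of $N$; thus $i([N,E,f])=[M,\widetilde E,\widetilde f]$. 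Therefore $i$ is an isomorphism, and combined with Theorem~\ref{thm:BO2SW} and the commutativity of \eqref{eq:comm.K.homo} this will establish Theorem~\ref{thm:eq.K-homology}.

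I expect the main obstacle to be the simultaneous bookkeeping of $\Spinc$-structures and the slicing operation across the three equivalence moves --- in particular, ensuring that ``take the $K$-slice and destabilise'' respects bordism (which is exactly what Lemma~\ref{lem:boundaryslice} is there to supply) and vector bundle modification, and confirming that $[M,\widetilde E,\widetilde f]\mapsto[N,E,f]$ is insensitive both to the $K$-diffeomorphism ambiguity of the slice and to the $G$-homotopy that makes $\widetilde f$ a bundle map. The regular-value statement built into Abels' theorem, the equivariant homotopy-lifting property, and the identification $M\cong G\times_K N$ are routine once these compatibilities are secured.
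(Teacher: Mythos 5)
Your proposal is correct and takes essentially the same route as the paper: both arguments slice $M$ via Abels' theorem (and its boundary version, Lemma~\ref{lem:boundaryslice}, for bordisms), restrict the bundle and reference data, destabilise the $\Spinc$-structure, and verify compatibility with each of the three moves using the essential uniqueness of the slice (Remark~\ref{rem:Slice.unique}). The only differences are organisational --- you package the argument as constructing an explicit two-sided inverse $i^{-1}$ rather than checking surjectivity and injectivity separately, and you handle the fact that $\widetilde f$ need not a priori carry $N$ into $Y$ by a $G$-homotopy-lifting argument where the paper instead (implicitly) chooses the slice to be $(\pi_X\circ\widetilde f)^{-1}(eK)$, a point the paper in fact glosses over and which your version makes explicit.
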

	
	\begin{proof}
		Let $(M, E, f)$ be a geometric cycle for $K^{\textnormal{{geo,}} G}_\bullet(X)$, where $\bullet = 0$ or $1$. Note that $M$ is a $G$-manifold with a $G$-equivariant $\Spinc$-structure. 
		By Remark~\ref{rem:Spinc.stru.ind} one can choose a global $K$-slice $N$ with a compatible $K$-equivariant $\Spinc$-structure.  
		The restriction of $E$ and $f$ to the submanifold $N$ is a pre-image $[(N, E|_N, f|_N)]$ of $[(M, E, f)]$.
		Hence $i$ is surjective. 
		
		To show injectivity, let $x_k\in K^{\textnormal{{geo,}} K}_0(Y),\,\, k=1, 2,$ be represented by geometric cycles $(N_k, E_k, f_k)$, such that $i(x_1)=i(x_2)$. That is, we have a relation between cycles
		\begin{equation}
		\label{e1}
		(G\times_K N_1, G\times_K E_1, \widetilde{f_1})\sim (G\times_K N_2, G\times_K E_2, \widetilde{f_2}).
		\end{equation}
		We show that this induces a relation
		\begin{equation}
		\label{e2}
		(N_1, E_1, f_1)\sim (N_2, E_2, f_2).
		\end{equation}
	
		Indeed, if \eqref{e1} is a relation by disjoint union/direct sum, it follows that \eqref{e2} is also, once one remembers that the $K$-slice is essentially unique (see Remark \ref{rem:Slice.unique} or Theorem 2.2 in \cite{Abels}).
			
		Now suppose  \eqref{e1} is a relation by bordism, so that $G\times_K N_1 \sqcup - G\times_K N_2$ is the boundary of another $G$-cocompact $G$-Spin$^c$-manifold $W$, which is part of a triple $(W,\tilde E,\tilde f)$ with $\tilde E$ and $\tilde f$ restricting to $G\times_K E_i$ and $\tilde f_i$ at the ends of $W$. Then by Lemma \ref{lem:boundaryslice}, $W$ has a $K$-slice $S$, with boundary $\partial S = \partial W\cap S$. Since the $G$-action - which by hypothesis is by self-diffeomorphisms of $W$ - preserves the interior and boundary of $W$, one sees that $\partial W$ has a $K$-slice $\partial W_0$ by the usual version of Abels' global slice theorem. Observe that, $G$-equivariantly, 
		$$G\times_K\partial S\cong\partial W\cong G\times_K N_1\sqcup -G\times_K N_2\cong G\times_K(N_1\sqcup -N_2),$$ 
		where the first diffeomorphism follows by dimensional considerations. By uniqueness of $K$-slices up to $K$-diffeomorphism, this means that, $K$-equivariantly, $\partial W_0\cong\partial S\cong N_1\sqcup -N_2$. We now argue that $S$ can be equipped with a $K$-$\Spinc$-structure that is compatible (in the sense of Remark \ref{rem:Spinc.stru.ind}) with the $G$-$\Spinc$-structure on $W$. First note that the boundary pieces $G\times_K N_1$ and $G\times_K N_2$ are contained in $G$-equivariant collar neighbourhoods in $W$ (see the remark above Example \ref{ex:fundamental.class}). This allows one to form the double $\overline{W}$ of $W$, which is then naturally a $G$-$\Spinc$, $G$-cocompact manifold without boundary. Take a $K$-slice $\overline{S}$ of $\overline{W}$, observing that $\overline{S}$ is the double of $S$. By the correspondence mentioned in Remark \ref{rem:Spinc.stru.ind}, $\overline{S}$ has a $K$-$\Spinc$-structure that induces the $G$-$\Spinc$-structure on $\overline{W}$. The manifold $S$ equipped with the restriction of this $K$-$\Spinc$-structure to $S$ is now a $K$-$\Spinc$-bordism from $N_1$ to $N_2$. Hence $N_1$ and $N_2$ are bordant. Next, note that the vector bundle $G\times_K E$ by definition restricts to $G\times_K E_1$ and $G\times_K E_2$ on $\partial W$. Taking the fibre at the identity $eK$ of the fibre bundle $G\times_K E\rightarrow G/K$ then gives a $K$-vector bundle whose restriction to $\partial S$ is isomorphic to $E_1\sqcup E_2$. One sees that, by construction, $\tilde f$ restricts to a $K$-equivariant map $f$ on $S$, such that $f|_{\partial S} = f_1\sqcup f_2$, after identifying $\partial S$ with $N_1\sqcup N_2$. Thus the two reduced geometric cycles $(N_1,E_1,f_1)$ and $(N_2,E_2,f_2)$ are still related by a bordism operation in geometric $K$-homology.

		Finally, if \eqref{e1} is a relation by a vector bundle modification, then there is a $G$-Spin$^c$-vector bundle $V$ over $G\times_K N_1 =: M_1$ with even-dimensional fibres such that $G\times_K N_2 =: M_2$ is the sphere sub-bundle of $(M_1\times\R)\oplus V$, while $G\times_K E_2$ is (modulo a tensor product with $\pi^*(G\times_K E_1)$, where $\pi:M_2\rightarrow M_1$ is the projection of the sphere bundle) the bundle whose fibre over each sphere is the Bott generator vector bundle $\beta$. By Remark \ref{rem:Spinc.stru.ind}, the bundle $V|_{N_1}$ over the slice ${N_1}$ has a compatible $K$-equivariant $\Spinc$-structure. Further, the sphere sub-bundle of $(N_1\times\R)\oplus (V|_{N_1})$ is precisely the restriction of $M_2$ to the submanifold $N_1\subseteq M_1$; it is a $K$-slice of $M_2$. Thus after restriction, the manifolds $N_1$ and $N_2$ in \eqref{e2} are still related by a vector bundle modification. Let us write $G\times_K E_2$ in the form $P_{M_1}\times_{\Spinc}\beta$, where $P_{M_1}$ is the $G$-$\Spinc$-structure on $M_1$. By the correspondence in \cite{HochsMathaiAJM}, $P_{M_1}$ reduces to a compatible $\Spinc$-structure $P_{N_1}$ on $N_1$, and the associated bundle reduces to $P_{N_1}\times_{\Spinc}\beta$, which is still the Bott generator over each spherical fibre of $N_2$. One verifies also that the function $\tilde f\circ\pi$ restricts to $f\circ\check{\pi}$, where $\check{\pi}$ is the projection of the sphere bundle $N_2\rightarrow N_1$. Thus the two geometric cycles $(N_1,E_1,f_1)$ and $(N_2,E_2,f_2)$ are still related by a vector bundle modification.

		Hence the map $i$ is also injective and therefore an isomorphism.
	\end{proof}

	\subsection{Induction on Analytic $K$-homology}
	\label{sec:Induction on analytic K-homology}
	
	As the next step in proving the equivalence of geometric and analytic $K$-homologies, we now turn to the task of applying Abels' global slice theorem in the context of analytic $K$-homology. The goal of this subsection is to first show, using $KK$-theory, that there is have an isomorphism of abelian groups
	\[
	K^K_{\bullet}(Y)\simeq K^G_{\bullet+d}(X),
	\]
	before providing a natural map at the level of analytic $K$-cycles that realises this isomorphism. We recall the following definition (see \cite{Blackadar} for more details). Let $X$ be a $\sigma$-compact $G$-space and $A, B$ be $G$-$C_0(X)$-algebras.
	Then the representable $KK$-theory $\cR KK(X; A, B)$ is defined to be the group of equivalence classes of Kasparov $(A, B)$-cycles $(\cE, \phi,T)$, defined previously, that satisfy the additional condition
	\[
	(fa)eb=ae(fb), \qquad \forall f\in C_0(X), a\in A, b\in B, e\in\cE,
	\]
where the equivalence relation is identical to that defining $KK(A,B)$. The proof we give makes use of the following technical result of Kasparov.
	\begin{theorem}[{\cite[Theorem 3.4]{Kasparov}}]
		\label{thm:Kas}
		Let $X$ be a $\sigma$-compact space on which groups $G$ and $\Gamma$ act and assume that these actions commute. Suppose also that the $\Gamma$-action is proper and free. Then for any $G$-$C_0(X)$-algebras $A, B$, the descent map gives rise to an isomorphism
		\[
		\cR KK^{G\times\Gamma}(X; A, B)\simeq \cR KK^G(X/\Gamma; A^{\Gamma}, B^{\Gamma}).
		\]
		Here, $A^{\Gamma}$ is a ``fixed-point subalgebra" of $A$ under $\Gamma$, defined in \cite{Kasparov}.
	\end{theorem}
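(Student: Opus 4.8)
The plan is to realise the asserted isomorphism as a composite of Kasparov's $\Gamma$-descent $j_\Gamma$ with a $G$-equivariant Morita equivalence on the coefficient side, and then to build an inverse by inducing Kasparov modules along the principal $\Gamma$-bundle $X\to X/\Gamma$. Conceptually the statement is an equivariant, $\cR KK$-theoretic form of the imprimitivity theorem (Green, Rieffel), available precisely because the $\Gamma$-action on $X$ is \emph{free and proper}; it specialises, with $G$ trivial, to the familiar fact that $\cR KK$ over $X$ equals $\cR KK$ over $X/\Gamma$ for free proper actions.

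\textbf{The map.} Since $\Gamma$ acts freely and properly on $X$, its action on any $G\times\Gamma$-$C_0(X)$-algebra $A$ is proper and saturated in Rieffel's sense, so there is a generalised fixed-point algebra $A^\Gamma$ and an $(A\rtimes\Gamma)$--$A^\Gamma$ imprimitivity bimodule. Because the $G$- and $\Gamma$-actions commute, $A^\Gamma$ inherits a $G$-action, it is a $C_0(X)^\Gamma = C_0(X/\Gamma)$-algebra, and the bimodule is $G$-equivariant and $C_0(X/\Gamma)$-linear. Applying $j_\Gamma$ to a $G\times\Gamma$-equivariant cycle $(\cE,\phi,T)$ representing a class in $\cR KK^{G\times\Gamma}(X;A,B)$ produces a $G$-equivariant Kasparov $(A\rtimes\Gamma,B\rtimes\Gamma)$-cycle over $X/\Gamma$; tensoring on the left and right with the two imprimitivity bimodules turns it into a $G$-equivariant Kasparov $(A^\Gamma,B^\Gamma)$-cycle, and one checks that the representability condition $(fa)eb=ae(fb)$ is preserved, so the output lies in $\cR KK^G(X/\Gamma;A^\Gamma,B^\Gamma)$. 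Well-definedness on homotopy classes, additivity, and compatibility with Kasparov products all follow from the corresponding properties of $j_\Gamma$ and of interior tensor products with equivariant Morita bimodules.

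\textbf{The inverse.} Over an open $U\subseteq X/\Gamma$ trivialising the bundle one has $X|_U\cong U\times\Gamma$, and a $G$-$C_0(U)$-algebra $D$ induces to the $G\times\Gamma$-algebra $C_0(\Gamma)\otimes D$ (with $\Gamma$ translating the first factor and $G$ acting on the second), whose generalised fixed-point algebra is again $D$. Using the principal-bundle structure these local pictures assemble to an induction functor $B_0\mapsto\Ind B_0 := C_0(X)\otimes_{C_0(X/\Gamma)}B_0$ from $G$-$C_0(X/\Gamma)$-algebras to $G\times\Gamma$-$C_0(X)$-algebras, together with natural isomorphisms $(\Ind B_0)^\Gamma\cong B_0$ and $\Ind(A^\Gamma)\cong A$. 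One then checks that $\Ind$ carries $G$-equivariant Kasparov modules over $X/\Gamma$ to $G\times\Gamma$-equivariant Kasparov modules over $X$, respecting homotopies and Kasparov products, and that under $\Ind(A^\Gamma)\cong A$ it sends the unit of $\cR KK^G(X/\Gamma;A^\Gamma,A^\Gamma)$ to the unit of $\cR KK^{G\times\Gamma}(X;A,A)$. Since both the descent-plus-Morita map and $\Ind$ are product-preserving, comparing them on these unit classes forces them to be mutually inverse. Alternatively one can bypass the explicit inverse: both $\cR KK^{G\times\Gamma}(-;\,\cdot\,,\,\cdot\,)$ and $\cR KK^G(-;\,\cdot\,,\,\cdot\,)$ are homotopy-invariant and half-exact in the base, agree on trivial bundles by the computation above, and patch by Mayer--Vietoris, with $\sigma$-compactness of $X$ used to reduce to a countable trivialising cover and to kill the resulting $\varprojlim^{1}$-term.

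\textbf{Main obstacle.} The delicate points are analytic and live on the coefficient-algebra side: showing $A^\Gamma$ is genuinely a $C_0(X/\Gamma)$-algebra (continuity of the $C_0(X/\Gamma)$-action on the dense subalgebra of $\Gamma$-integrable elements), that the imprimitivity bimodule is $G$-equivariant and central over $C_0(X/\Gamma)$, and, for the induction functor over a \emph{nontrivial} bundle, that the local models $C_0(\Gamma)\otimes D$ patch to a well-defined $G\times\Gamma$-$C_0(X)$-algebra with $\Ind(A^\Gamma)\cong A$ naturally in $A$. I expect the imprimitivity-bimodule route to be cleaner than the Mayer--Vietoris patching, since it confines all $\sigma$-compactness and approximate-unit issues to Rieffel's theory of proper actions rather than to an iterated six-term exact sequence.
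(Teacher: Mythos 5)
The paper does not prove this statement: it is Theorem~3.4 of Kasparov's 1988 \emph{Inventiones} paper, quoted and used as a black box. There is therefore no ``paper's own proof'' to compare against; what can be said is how your outline stacks up against Kasparov's argument and where it is incomplete.

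Your overall strategy --- constructing the forward map as $\Gamma$-descent followed by the Rieffel/Green imprimitivity Morita equivalence $A\rtimes\Gamma\sim A^\Gamma$ (using freeness and properness of the $\Gamma$-action), and constructing the inverse as $C_0(X/\Gamma)$-balanced inflation $\Ind B_0 = C_0(X)\otimes_{C_0(X/\Gamma)}B_0$ --- is exactly the shape of Kasparov's own proof, where the backward map is pullback along $X\to X/\Gamma$ and the forward map is descent followed by compression by a cut-off projection realising the Morita equivalence. So on the level of ideas the attempt is faithful.

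Two places in the write-up are imprecise enough to count as gaps in a proof (as opposed to a sketch). First, the claim that ``since both \ldots\ are product-preserving, comparing them on these unit classes forces them to be mutually inverse'' is not valid as stated: preserving Kasparov products and fixing units does not by itself make two maps inverse. What you actually need is that both composites are $\cR KK$-linear endomorphisms (i.e.\ compatible with Kasparov products in both variables, hence given by Kasparov product with a fixed class), and then the substantive work is the verification that this class is the unit. That verification --- computing the effect of descent, compression and inflation on the identity cycle --- is precisely the analytic heart of Kasparov's proof and is asserted rather than carried out in your outline. Second, the ``alternative'' Mayer--Vietoris route quietly assumes half-exactness of $\cR KK^{G\times\Gamma}(-;\cdot,\cdot)$ in the base variable and the existence of a suitable six-term sequence for a trivialising cover; neither is immediate in the representable-$KK$ setting, and the $\lim^1$ control you invoke would need the compatibility of the connecting maps with descent and inflation. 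The imprimitivity-bimodule route you favour is indeed the cleaner one, but the mutual-inverse step must be filled in rather than argued by ``both preserve products and units.''
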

	We are now in a position to carry out the proof of the main result of this subsection.
	\begin{proposition}
		\label{prop:ana.ind.iso}
		Let $G$ be an almost-connected Lie group acting properly on $X$ and $K$ a maximal compact subgroup. For $Y$ a global $K$-slice of $X$, we have
		\[
		K^K_{\bullet}(Y)\simeq K^G_{\bullet+d}(X).
		\]
	\end{proposition}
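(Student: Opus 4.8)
The plan is to realise the isomorphism $K^K_\bullet(Y) \simeq K^G_{\bullet+d}(X)$ by passing through representable $KK$-theory and applying Kasparov's descent isomorphism, Theorem~\ref{thm:Kas}, with a clever choice of the auxiliary proper free group action. First I would rewrite both sides as representable $KK$-groups over a point: since $Y$ is $K$-compact, $K^K_\bullet(Y) = KK^K_\bullet(C_0(Y),\C) = \cR KK^K_\bullet(\mathrm{pt}; C_0(Y),\C)$, and similarly $K^G_\bullet(X) = \cR KK^G_\bullet(\mathrm{pt}; C_0(X),\C)$. The key geometric input is Abels' theorem (Theorem~\ref{thm:abels}): $X \cong G\times_K Y$, so $C_0(X) \cong C_0(G\times_K Y)$, together with the standard identification of the homogeneous space data. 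The natural setting for Theorem~\ref{thm:Kas} is to take the space $G\times Y$ (or $G/K$ paired with an appropriate module), on which $G$ acts (say by left translation on the first factor) and $K$ acts properly and freely (by $(g,y)\mapsto (gk^{-1},ky)$), with the two actions commuting; the quotient by $K$ is exactly $X = G\times_K Y$.

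The main steps, in order, would be: (1) Set up the commuting $G\times K$ action on a suitable $\sigma$-compact space — I expect this to be built from $G$ with the left $G$- and right $K$-actions, so that $(G)/K = G/K$ and the $K$-action is proper and free. (2) Feed into Theorem~\ref{thm:Kas} the algebras $A = B$ chosen so that the fixed-point subalgebras under $K$ produce $C_0(X)$ on one side; concretely one wants $\cR KK^{G\times K}(G; A, B) \simeq \cR KK^G(G/K; A^K, B^K)$, and then combine with the $K$-equivariant contractibility/triviality of $G$ down to $K$ (i.e. $G\simeq K$ as $K$-spaces up to $K$-homotopy, since $G/K$ is $K$-equivariantly contractible — $G$ deformation retracts $K$-equivariantly onto $K$). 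This collapses the left-hand $\cR KK^{G\times K}(G; -, -)$ to $\cR KK^K(\mathrm{pt}; -, -) = KK^K(-,-)$. (3) The degree shift by $d = \dim G/K$ appears because, to match $\C$ on both sides of the equivariant pairing, one must twist by the $\Spinc$-Dirac class of $G/K$ (this is where the hypothesis that $G/K$ is $G$-$\Spinc$, invoked for Phillips' isomorphism in the introduction, enters), i.e. Poincaré duality along the fibre $G/K$ contributes a $KK^K$-class in degree $d$ implementing $C_\tau(G/K)\simeq C_0(G/K)$ up to the shift; Bott periodicity on the normal/tangent bundle of $G/K$ gives the even/odd bookkeeping $\bullet \mapsto \bullet + d$. (4) Assemble the chain of isomorphisms and check $G$-equivariance and naturality at each stage.

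The hard part will be step (2)–(3): making precise which $G\times K$-space and which pair of $C_0(X)$-algebras to plug into Theorem~\ref{thm:Kas} so that the fixed-point subalgebras come out to be exactly $C_0(X)$ and $\C$ (in the right degrees), and then correctly tracking the dimension shift $d$ — this requires either invoking a $K$-equivariant Bott/Thom isomorphism for the homogeneous space $G/K$ (valid precisely because $G/K$ carries a $G$-invariant $\Spinc$-structure) or citing Phillips' induction isomorphism $K^K_\bullet(C_\tau(Y)) \simeq K^G_{\bullet+d}(C_\tau(X))$ from \cite{Phillips2} and transporting it along equivariant Poincaré duality for the compact group $K$. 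I would likely present the argument by first establishing the $KK$-level statement abstractly via Theorem~\ref{thm:Kas} and a $K$-equivariant contraction of $G$ to $K$, and then separately identifying the shift using the $\Spinc$-structure on $G/K$; the bulk of the remaining work — verifying that the auxiliary actions are proper, free and commuting, and that all the algebras are genuine $C_0(X)$-algebras — is routine and would be stated without detailed computation.
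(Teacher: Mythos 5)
Your overall plan is essentially the paper's: start from $K^K_\bullet(Y)\simeq\cR KK^K_\bullet(\mathrm{pt};C(Y),\C)$, push through Kasparov's descent isomorphism (Theorem~\ref{thm:Kas}) using the proper free actions arising from Abels' picture $X\cong G\times_K Y$, land in $KK^G_\bullet(C_0(G\times_K Y),C_0(G/K))$, and finally remove the $C_0(G/K)$ by a Dirac/Bott argument that produces the shift by $d=\dim G/K$. The differences are in the details, and one of them is a real error.

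First, a substantive misunderstanding: you claim the degree shift ``is where the hypothesis that $G/K$ is $G$-$\Spinc$ \ldots enters,'' treating the Dirac/Bott step as Morita equivalence $C_\tau(G/K)\simeq C_0(G/K)$. In fact the paper's argument, and Kasparov's, requires no $\Spinc$-structure here: because $G$ is almost-connected, $G/K$ is a \emph{special manifold} in the sense of~\cite{Kasparov}, and the Dirac element $[\partial_{G/K}]\in KK^G_d(C_0(G/K),\C)$ and Bott element $[\beta]\in KK^G_d(\C,C_0(G/K))$ exist and are mutually inverse Kasparov products without any spinorial assumption (they come from the Clifford/Bott operator on the vector space $\kp\cong\R^d$). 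The paper explicitly records this: ``The proof that $j$ is an isomorphism did not make use of a $G$-equivariant $\Spinc$-structure on $G/K$.'' That hypothesis is used elsewhere (geometric induction, Proposition~\ref{lem:iso.geo.ind}); you should not import it into this proposition, or you unnecessarily restrict the scope of the result.

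Second, your step (2) proposes to ``collapse $\cR KK^{G\times K}(G;-,-)$ to $KK^K(-,-)$'' via a $K$-equivariant deformation retraction of $G$ onto $K$. That is not how the paper handles it, and as stated it does not obviously work: a homotopy of the base space $G$ does not remove the $G$-equivariance from $\cR KK^{G\times K}$. The clean route, which the paper takes, is to apply Theorem~\ref{thm:Kas} \emph{twice}: once with $\Gamma=G$ acting freely and properly on $G$ (this turns $\cR KK^K(\mathrm{pt};C(Y),\C)$ into $\cR KK^{G\times K}(G;C_0(G\times Y),C_0(G))$, using $C_0(G)^G=\C$), and once more with $\Gamma=K$ (this produces $\cR KK^G(G/K;C_0(G\times_K Y),C_0(G/K))$). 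Contractibility of $G/K$ is then used only to pass from $\cR KK^G(G/K;-,-)$ to $KK^G(-,-)$, not to absorb a group action.

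Third, your suggested alternative---transporting Phillips' $K$-theory induction isomorphism for $C_\tau$ across equivariant Poincar\'e duality---is dangerous inside the paper's architecture, because Poincar\'e duality for non-compact $G$ (Theorem~\ref{thm:PD1}) is itself proved \emph{using} the analytic induction isomorphism you are trying to establish here. You could not invoke the $G$-side Poincar\'e duality without introducing circularity. The direct $KK$-theoretic argument avoids this entirely.
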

	\begin{proof}
		Note that for the $\mathbb{C}$-algebra $C(Y)$, the definitions of $\cR KK$ and $KK$ coincide:
		\begin{align*}
		K^K_\bullet(Y) &\simeq \cR KK^K_\bullet(pt;C(Y),\mathbb{C}).
		\end{align*}
		Now a manifold is a $\sigma$-compact space, and the action of $G$ on itself is proper and free. So from Theorem~\ref{thm:Kas} we obtain (noting that $C_0(G)^G$ is precisely $\mathbb{C}$ in the sense of \cite{Kasparov})
		\[
		\cR KK^K_{\bullet}(pt; C(Y), \C)\simeq \cR KK_{\bullet}^{G\times K}(G; C_0(G\times Y), C_0(G)).
		\]
		Applying Theorem~\ref{thm:Kas} again to the right-hand side gives us
		\[
		\cR KK_{\bullet}^{G\times K}(G; C_0(G\times Y), C_0(G))\simeq \cR KK_{\bullet}^{G}(G/K; C_0(G\times_K Y), C_0(G/K)).
		\]
		Therefore, 
		\begin{equation}
		\label{eq:1}
		K^K_{\bullet}(Y)\simeq \cR KK_{\bullet}^{G}(G/K; C_0(G\times_K Y), C_0(G/K))
		=KK_{\bullet}^{G}(C_0(G\times_K Y), C_0(G/K)),
		\end{equation}
		noting that $G/K$ is contractible to a point. Because $G$ is almost-connected, it is a {\em special manifold} in the sense of ~\cite{Kasparov}, so that there exist a Dirac element and a Bott element:  
		\[
		[\partial_{G/K}]\in KK_d^G(C_0(G/K), \C), \qquad [\beta]\in KK_d^G(\C, C_0(G/K)),
		\]
		such that (denoting by $\otimes$ the Kasparov product between classes,  cf. ~\cite{Kasparov})
		\begin{align*}
		[\partial_{G/K}]\otimes_\C[\beta]=1 &\in KK^G(C_0(G/K), C_0(G/K)),\\
		[\beta]\otimes_{C_0(G/K)}[\partial_{G/K}]=1 &\in KK^G(\C, \C).
		\end{align*}
		This leads to the isomorphism 
		\[
		KK^{G}_{\bullet}(C_0(G\times_K Y), C_0(G/K))\simeq KK^{G}_{\bullet+d}(C_0(G\times_K Y),\mathbb{C}).
		\] 
		Together with~(\ref{eq:1}) we obtain $K^K_{\bullet}(Y)\simeq K^G_{{\bullet}+d}(X).$
		\color{black}
	\end{proof}
	
	Now we give an explicit map for this isomorphism. From the proof of the above Proposition, we find that the image under $j$ of a $K$-equivariant Kasparov cycle $[(\cH, \phi, F)] \in K^{K}_{\bullet}(Y)$ is obtained by taking the Kasparov product of the ``lifted" cycle 
	\begin{equation}
	\label{eq:lifted.cycle}
	[(C_0(G\times_K \cH), \tilde\phi, \tilde F)] \in KK^{G}_{\bullet}(C_0(G\times_K Y), C_0(G/K))
	\end{equation}
	with the Dirac element $[\partial_{G/K}]\in KK_d^G(C_0(G/K), \C)$.
	Note that $C_0(G\times_K \cH)$ is a $G$-Hilbert $C_0(G/K)$-module whose $C_0(G/K)$-valued inner product is given by the fibrewise inner product on $\cH$, and $\tilde\phi$ is the multiplication action on $C_0(G\times_K\cH)$ and $\tilde F$ is a family of operators indexed by $G/K$ and given on each fibre $\cH$ by $F$.
	
	The proof that $j$ is an isomorphism did not make use of a $G$-equivariant $\Spinc$-structure on $G/K$, but for the remainder of this paper, we shall assume that such a structure on $G/K$ exists.
	Let $\mathfrak g, \mathfrak k$ be the Lie algebras of $G$ and $K$. There is a Lie algebra $\mathfrak p$ such that the splitting $\mathfrak g=\mathfrak k\oplus\mathfrak p$ is invariant 
	under the adjoint action of $K.$ Our assumption of a $G$-equivariant $\Spinc$-structure on $G/K$ means that 
	$\Ad: K\rightarrow SO(\mathfrak p)$ can be lifted to
	\begin{equation} 
	\label{eq tilde Ad}
	\widetilde{\Ad}: K \to \Spin^c(\kp).
	\end{equation}  
	\begin{remark}
		\label{rem:G/Kspinc}
		Replace $G$ by a double cover $\widetilde G$ and consider the diagram
		\[
		\xymatrix{
			\widetilde{K} \ar[r]^-{\widetilde{\Ad}} \ar[d]_{\pi_K}& \Spin^c(\kp) \ar[d]_{\pi}^{}\\
			K \ar[r]^-{\Ad} & \SO(\kp),
		}
		\]
		where
		\[
		\widetilde{K} := \{ (k, a) \in K\times \Spin^c(\kp); \Ad(k) = \pi(a)\},
		\]
		and the maps $\pi_K$ and $\widetilde{\Ad}$ are defined by
		\[
		\begin{split}
		\pi_K(k, a)&:= k; \\
		\widetilde{\Ad}(k, a) &:= a,
		\end{split}
		\]
		for $k \in K$ and $a \in \Spin^c(\kp)$.
		Then $\widetilde G/\widetilde K$ has a $G$-equivariant $\Spinc$-structure. Indeed, for all $k \in K$,
		\[
		\pi_K^{-1}(k) \cong \pi^{-1}(\Ad(k)) \cong U(1),
		\]
		so $\pi_K$ is the projection of a $U(1)$-central extension. Since $G/K$ is contractible, $\widetilde{K}$ is the maximal compact subgroup of a $U(1)$-central extension of $G$.
	\end{remark}
	
	Denote by $S$ the $K$-vector space underlying $\mathfrak{p}$ in the $\Spinc$-representation \eqref{eq tilde Ad} of $K$.
	Fix the normalising function
	\[
	b(x)=\frac{x}{\sqrt{x^2+1}}.
	\]
	The Dirac element can be written as 
	\[
	[\partial_{G/K}]:=\left[\left((L^2(G)\otimes S)^K, m, b(\partial_{G/K})\right)\right],
	\]
	where $m$ is scalar multiplication of $C_0(G/K)$ on the Hilbert space $(L^2(G)\otimes S)^K$ and $\partial_{G/K}$ is the $\Spinc$-Dirac operator on $G/K$.
	Then the image of $[(\cH, \phi, F)]$ under induction is 
	\[
	j[(\cH, \phi, F)]=[(\cE, \tilde\phi, \tilde F\sharp b(\partial_{G/K}))],
	\] 
	where $\cE$ is the Hilbert space
	\[
	\cE:=C_0(G\times_K \cH)\otimes_{C_0(G/K)}(L^2(G)\otimes S)^K\simeq (L^2(G)\otimes\cH\otimes S)^K,
	\]
	$\tilde\phi$ is the representation 
	\[
	\tilde\phi: C_0(G\times_K Y)\rightarrow \cL(\cE)
	\] 
	given by the obvious pointwise multiplication determined by $\phi: C(Y)\rightarrow\cL(\cH)$,
	and 
	\[
	\tilde F\sharp b(\partial_{G/K})
	\]
	means the Kasparov product (of operators) of $\tilde F$ and $b(\partial_{G/K})=\partial_{G/K}(1+\partial_{G/K}^2)^{-\frac12}$.
	When $F$ is also a Dirac-type operator, the Kasparov product can be explicitly written down, as follows.
	
	\begin{example}
		Let $Y$ be a $K$-manifold with a $K$-equivariant $\Spinc$-structure and $\partial_Y$ be the associated $\Spinc$-Dirac operator.
		Then 
		\begin{align*}
		\cE& =C_0(G\times_K L^2(Y, S_Y))\otimes_{C_0(G/K)}(L^2(G)\otimes S)^K\simeq (L^2(G)\otimes L^2(Y, S_Y)\otimes S)^K\\
		& \simeq L^2(G\times_K Y, G\times_K(S_Y\times S))\simeq L^2(X, S_X).
		\end{align*}
		and $b(\partial_X)$ represents the Kasparov product of $b(\tilde\partial_Y\otimes 1)$ and $b(1\otimes\partial_{G/K})$, namely
		\[
		\left(\frac{\tilde\partial_Y}{\sqrt{\tilde\partial_Y^2+1}}\otimes 1\right)\sharp\left(1\otimes \frac{\partial_{G/K}}{\sqrt{1+\partial_{G/K}^2}}\right)=\frac{\partial_X}{\sqrt{1+\partial_X^2}}.
		\] 
		Therefore
		\begin{equation}
		\label{eq:Khomology.XY}
		j[\partial_Y]=[\partial_X].
		\end{equation}
		This relation also holds for twisted Dirac operators (cf. the proof of Lemma~\ref{lem:comm.diagram}).
	\end{example}


	\subsection{Commutativity of the Diagram}
	\label{sec:Commutativity of the diagram}
	
	\begin{lemma}
		\label{lem:comm.diagram}
		Diagram~(\ref{eq:comm.K.homo}) commutes.
	\end{lemma}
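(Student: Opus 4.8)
The plan is to verify commutativity on generators: since $BD$, $i$ and $j$ are homomorphisms of abelian groups, it suffices to prove that $j\bigl(BD[(N,E,f)]\bigr)=BD\bigl(i[(N,E,f)]\bigr)$ for an arbitrary geometric cycle $(N,E,f)$ for $K^{\textnormal{{geo,}} K}_\bullet(Y)$. Write $D_E$ for the $E$-twisted $\Spin^c$-Dirac operator on $N$, set $M:=G\times_K N$ with the $\Spin^c$-structure obtained from that of $N$ by the stabilisation procedure of Remark~\ref{rem:Spinc.stru.ind}, put $\tilde E:=G\times_K E$, and let $D_{\tilde E}$ be the $\tilde E$-twisted $\Spin^c$-Dirac operator on $M$. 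Unwinding the definitions of $BD$ and of $i$, one has $BD[(N,E,f)]=f_*[D_E]$ and $BD\bigl(i[(N,E,f)]\bigr)=\tilde f_*[D_{\tilde E}]$, where $\tilde f=G\times_K f\colon M\to X$. Hence the lemma reduces to two assertions: (i) $j[D_E]=[D_{\tilde E}]$ in $K^G_{\bullet+d}(M)$, where $j$ is the induction map attached to the $K$-slice $N\subseteq M$; and (ii) $j\circ f_*=\tilde f_*\circ j$ as maps $K^K_\bullet(N)\to K^G_{\bullet+d}(X)$, comparing the induction maps attached to the slices $N\subseteq M$ and $Y\subseteq X$.

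Assertion (ii) I would obtain from naturality. In the representable $KK$-picture used in the proof of Proposition~\ref{prop:ana.ind.iso}, the map $j$ is assembled from the descent isomorphisms of Theorem~\ref{thm:Kas} and from Kasparov product with the Dirac element $[\partial_{G/K}]$, all of which are natural in the coefficient $C_0(X)$-algebras; since $\tilde f=G\times_K f$ is the map induced by $f$, the square commutes. Concretely at the level of cycles: $j$ sends $(\cH,\phi,F)$ to $\bigl(\cE,\tilde\phi,\tilde F\sharp b(\partial_{G/K})\bigr)$ with $\cE\simeq(L^2(G)\otimes\cH\otimes S)^K$, where the lifting $\psi\mapsto\tilde\psi$ of representations and of $*$-homomorphisms is compatible with composition, and $\tilde f'\colon C_0(X)\to C_0(M)$ is the pullback along $G\times_K f$, which restricts on the fibre over $eK$ to $f'$; therefore inducing after pushing forward by $f$, and pushing forward by $\tilde f$ after inducing, produce literally the same cycle. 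This step is essentially formal.

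Assertion (i) is the crux, and is the twisted refinement promised in the Example preceding the lemma. By the stabilisation construction of \cite{HochsMathaiAJM}, the spinor bundle of $M$ for the induced $\Spin^c$-structure is $S_M\cong G\times_K(S_N\otimes S)$, with $S$ the $K$-equivariant spinor module introduced above; consequently
\[
\cE=C_0\bigl(G\times_K L^2(N,S_N\otimes E)\bigr)\otimes_{C_0(G/K)}(L^2(G)\otimes S)^K\;\simeq\;L^2\bigl(M,\,S_M\otimes\tilde E\bigr),
\]
and $j[D_E]$, $[D_{\tilde E}]$ are then represented on the same Hilbert space with the same multiplication representation of $C_0(M)$. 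It remains to identify the operators, i.e. to show that the Kasparov product of bounded transforms $b\bigl(\widetilde{D}_E\otimes 1\bigr)\sharp\, b\bigl(1\otimes\partial_{G/K}\bigr)$ represents the same class as $b(D_{\tilde E})$. For this I would fix a $K$-invariant splitting $TM\cong T^{\mathrm{vert}}M\oplus(G\times_K\kp)$ and its associated connection; with respect to it, $D_{\tilde E}$ differs from $\widetilde{D}_E\otimes 1+1\otimes\partial_{G/K}$ by a zeroth-order bundle endomorphism (curvature terms), which does not affect the bounded transform modulo compacts, while for the sum one checks the standard conditions — positivity of the relevant graded commutators together with the connection condition — that identify the product of the two bounded transforms with the bounded transform of the sum, exactly as in the untwisted computation of the Example, the bundle $E$ entering only as a spectator tensor factor. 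This gives $j[D_E]=[D_{\tilde E}]$.

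Putting these together, $j\bigl(BD[(N,E,f)]\bigr)=j\bigl(f_*[D_E]\bigr)=\tilde f_*\bigl(j[D_E]\bigr)=\tilde f_*[D_{\tilde E}]=BD\bigl(i[(N,E,f)]\bigr)$, which proves the lemma. The principal obstacle is the operator identification in (i): one must follow the $\Spin^c$-structures through stabilisation carefully so that the Clifford multiplications and $\Z_2$-gradings on $S_M$ and on the external product $S_N\otimes S$ match, and only then invoke the product-of-Dirac-operators computation; assertion (ii), by contrast, is a routine naturality check.
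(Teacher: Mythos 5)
Your proposal is essentially correct and tracks the same overall strategy as the paper's proof: reduce to geometric cycles, identify the underlying Hilbert spaces and multiplication representations, and then show the operators agree at the level of $K$-homology classes. You also cleanly isolate naturality (your assertion (ii)) as a separate step, which the paper handles only implicitly by comparing cycles directly.

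The genuine point of divergence is in the operator identification, your assertion (i). You argue that, for a suitable $K$-invariant splitting of $TM$, the operator $D_{\tilde E}$ differs from $\widetilde{D}_E\otimes 1 + 1\otimes\partial_{G/K}$ by a zeroth-order (bounded) endomorphism, and then invoke the unbounded-picture criterion (positivity of graded commutators, connection condition) to identify the Kasparov product of bounded transforms with $b(D_{\tilde E})$. The paper instead sidesteps the Kasparov-product verification by an adiabatic-limit homotopy of $G$-invariant metrics $g_{M,s}=g_{M/(G/K)}+s^{-2}\pi^*g_{G/K}$, appealing to the fact that the $K$-homology class of a Dirac operator is independent of the choice of metric. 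Both are valid; your route requires one to actually check the Kucerovsky/Baaj--Julg-type hypotheses (or to justify in passing that a bounded perturbation does not change the class), whereas the paper's route turns the problem into a metric deformation argument where everything is handled by a single invariance principle. The adiabatic argument is arguably cleaner here because it delivers $b(D_{M,\tilde E})$ directly as a representative of the product class, without having to verify the product criteria. Your bounded-perturbation argument, on the other hand, is perhaps more transparent about \emph{why} the operators agree, making visible the curvature/mean-curvature correction terms that the adiabatic rescaling is suppressing.

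One small caution: you should be explicit that the zeroth-order difference argument requires a product-type $G$-invariant metric on $M$ (so that the fibres are totally geodesic and the mean curvature term vanishes); for a generic $G$-invariant metric the decomposition of the Dirac operator has further lower-order corrections, and the cleanest justification that those can be ignored is exactly the metric-independence fact the paper leans on.
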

	
	\begin{proof}
		Let $x$ be an element of $K^K_{\bullet}(Y)$. 
		From~\cite{BOSW}, there is a geometric cycle $(N, E, f)$ representing an element of $K^{\textnormal{{geo,}} K}_{\bullet}(Y)$ such that 
		\[
		x=f_*([E]\cap[D_N]).
		\]
		By definition, the map $i$ sends the class of geometric cycles $[(N, E, f)]$ to the class of geometric cycles 
		\[
		[(M, \tilde E, \tilde f)]\in K^{\textnormal{{geo,}} G}_{\bullet+d}(X),
		\]
		where $M=G\times_K N$, $\tilde E=G\times_K E$ and $\tilde f: M\rightarrow X$ is the lift of $f: N\rightarrow Y$ as in Definition \ref{def:geometricinduction}.
		Thus, elements in~(\ref{eq:comm.K.homo}) are related as follows:
		\begin{equation*}
		\label{eq:comm.K.homo.ele}
		\begin{CD}
		[(N, E, f)] @>BD>> f_*([E]\cap[D_N])\\
		@ViVV @VjVV\\
		[(M, \tilde E, \tilde f)] @>BD>> \tilde f_*([\tilde E]\cap[D_N]),
		\end{CD}
		\end{equation*}
		and commutativity of~(\ref{eq:comm.K.homo}) means precisely
		\[
		j(f_*([E]\cap[D_N]))=\tilde f_*([\tilde E]\cap[D_M]).
		\]
		Writing both sides in terms of cycles, we need to show that the Kasparov $(C_0(X), \C)$-cycles
		\[
		((L^2(G)\otimes L^2(S_N\otimes E)\otimes S)^K, \widetilde{m\circ f'}, b(\widetilde{D_{N, E}})\sharp b(\partial_{G/K}))
		\]
		and 
		\[
		(L^2(S_M\otimes\tilde E), \tilde m\circ \tilde f', b(D_{M, \tilde E}))
		\]
		give rise to the same element in $K^G_0(X).$
		Here $S_N$ stands for the spinor bundle over $N$ and $m$ stands for the pointwise multiplication of the algebra $C_0(N)$ of continuous functions on the Hilbert space. At this point, recall the well-known fact that the $K$-homology class of a Dirac operator is independent of the choice of metric. Given a $K$-invariant metric on $N$, let us pick the natural ``product-type" metric on $G\times_K N$. Then we have
		\[
		(L^2(G)\otimes L^2(S_N\otimes E)\otimes S)^K\simeq L^2(S_M\otimes\tilde E),
		\]
		since $S_M=G\times_K (S_M|_N)$ and the restriction $S_M$ to $N$ splits into a tensor product of $S_N$ and $S$.  
		Obviously, $\tilde m\circ \tilde f'$ and $\widetilde{m\circ f'}$ both give the same representation of $C_0(X)$ on $L^2(S_M\otimes\tilde E)$ by scalar multiplication, compatible with $\tilde f': C_0(X)\rightarrow C_0(M).$
		Finally, from $D_{N, E}$ to $\widetilde{D_{N, E}}$ we replace the $K$-equivariant $\Spinc$-connection $\nabla^{E}$ on $N$ by the $G$-equivariant connection $\nabla^{\tilde E}=G\times_K\nabla^{E}$ on $M$. 
		If $\pi: M=G\times_K N\rightarrow G/K$ is the projection, the $\Spinc$-connection $\nabla^{G/K}$ on $G/K$ is pulled back to a $G$-equivariant $\Spinc$-connection $\pi^*\nabla^{G/K}$ on $M.$
		Let $\{e_i\}$ be a local orthonormal frame in the direction of fibres of $G\times_KN\rightarrow G/K$ and $\{f_j\}$ a local orthonormal frame of the base $G/K.$
		Then $b(\widetilde{D_{N, E}})\sharp b(\partial_{G/K})$ looks locally like $b(x)$, where
		\[
		x=\sum_{i=1}^{\dim N}c(e_i)\nabla^{\tilde E}_{e_i}+\sum_{j=1}^{d}c(f_j)\pi^*\nabla^{G/K}_{f_j}.
		\]
		With respect to the fibration $N\rightarrow M\rightarrow G/K$, a $G$-invariant metric $g_M$ on $M$ determines a metric $g_{G/K}$ on the base $G/K$ and a family of metrics $g_{M/(G/K)}$ on the family of manifolds $\pi: M\rightarrow G/K$ parametrised by $G/K$. Taking the adiabatic limit in the parameter $s$,
		$$g_{M, s}:=g_{M/(G/K)}+s^{-2}\pi^*g_{G/K}$$ approaches a product as $s\to 0$, and these metrics are $G$-invariant (see \cite{Bismut}). 
		Since the $K$-homology class represented by a Dirac operator is independent of the choice of metrics, the operator $b(\widetilde{D_{N, E}})\sharp b(\partial_{G/K})$ represents the same $K$-homology class as $b(D_{M, \tilde E}).$
		The lemma is then proved.
	\end{proof}
	
	\remark
	\label{sec:Consequence}
	
	Let us summarise of the consequences of Theorem~\ref{thm:eq.K-homology}. Let $G$ be an almost-connected Lie group acting on a manifold $X$ properly and cocompactly. 
	For every element $x$ in the analytic $K$-homology $K_0^G(X)$, there exists a unique class $[(M, E, f)]\in K_0^{G, geo}(X)$ such that
	\[
	x=f_*([E]\cap[D_M])=f_*([D_{M, E}]),
	\] 
	where $f: M\rightarrow X$. 
	Moreover, letting $K$ be a maximal compact subgroup, there exists a $K$-submanifold $N$ such that: 
	\begin{itemize} 
		\item $M$ is diffeomorphic to $G\times_K N$,
		\item $N$ admits a $K$-equivariant $\Spinc$-structure compatible with the $G$-equivariant $\Spinc$-structure on $M$ and
		\item $(N, E|_N, f|_N)$ is a geometric cycle for $K_{d}^{K, geo}(Y)$,
	\end{itemize}
	and there exists a unique element $(f|_N)_*([D_{N, E|_{N}}])\in K_d^{K}(Y)$ satisfying
	\begin{equation}
	\label{eq:K-homo.ind}
	K_d^K(Y)\rightarrow K_0^G(X), \qquad (f|_N)_*([D_{N, E|_{N}}])\mapsto f_*([D_{M, E}])=x.
	\end{equation}
	Here $Y=f(N)\subset X$ is a $K$-slice of $X$ and $d$ is the (mod 2) dimension of $G/K$.
	
	\begin{remark}
		\label{rmk:peter's.result}
		In Theorem 4.6 in \cite{HochsDS} and Theorem 4.5 in \cite{HochsPS}, a map
		\[
		\KInd_K^G\colon K_{\bullet}^K(Y) \to K_{\bullet+d}^G(X)
		\]
		is constructed by a different method.
		In Section 6 of \cite{HochsDS}, it is shown that the $K$-homology class of a $\Spinc$-Dirac operator on $Y$, associated to a connection $\nabla^Y$ on the determinant line bundle of a $\Spinc$-structure, is mapped to the class  of a $\Spinc$-Dirac operator on $X$ associated to a connection $\nabla^X$ induced by $\nabla^Y$ on the determinant line bundle of the induced $\Spinc$-structure, by the map $\KInd_K^G$:
		\[
		\KInd_K^G[\partial_Y] = [\partial_X].
		\] 
	\end{remark}
	
	Since we have shown that the two $K$-homology theories are isomorphic and that the diagram (2.5) commutes, the two induction isomorphisms $i$ and $j$ can be thought of as being the same map. In view of Remark~\ref{rmk:peter's.result}, we shall denote both $i$ and $j$ by $\KInd_K^G$, which we refer to as \emph{induction on $K$-homology}. (\ref{eq:K-homo.ind}) can now be formulated in the following way:
	
	\begin{proposition}
		Any $x\in K_0^G(X)$ can be represented by a $G$-equivariant $\Spinc$-Dirac operator on $M$ twisted by a $G$-vector bundle $E$, where $f: M\rightarrow X$ is a continuous $G$-equivariant map such that 
		\[
		x=f_*([D_{M, E}])=\KInd_K^G((f|_N)_*([D_{N, E|_N}])).
		\] 
	\end{proposition}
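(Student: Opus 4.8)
\emph{Proof plan.} The statement is a repackaging of the isomorphisms and the commutative diagram already established; no new analysis is needed, only careful bookkeeping.

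First I would use the surjectivity half of Theorem~\ref{thm:eq.K-homology}: since $BD\colon K^{\textnormal{{geo,}} G}_{\bullet}(X)\to K^G_{\bullet}(X)$ is an isomorphism, there is a geometric cycle $(M,E,f)$ with $BD[(M,E,f)]=x$. Unwinding the definition of the Baum--Douglas map~\eqref{eq:BaumDoug}, this means precisely that $x=f_*([D_{M,E}])$, where $D_{M,E}$ is the $G$-equivariant $\Spinc$-Dirac operator on the $G$-$\Spinc$-manifold $M$ twisted by the $G$-bundle $E$ and $f\colon M\to X$ is a continuous $G$-equivariant map. This gives the first half of the claimed equality.

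Next I would run this cycle through Proposition~\ref{lem:iso.geo.ind}. Let $\phi\colon X\to G/K$ be the Abels map of Theorem~\ref{thm:abels}, with slice $Y=\phi^{-1}(eK)$, and take the compatible $K$-slice $N:=(\phi\circ f)^{-1}(eK)\subseteq M$, which by Remark~\ref{rem:Slice.unique} is the essentially unique slice of $M$ and satisfies $M\cong G\times_K N$ and $f(N)\subseteq Y$. Equip $N$ with the $K$-$\Spinc$-structure compatible with that of $M$ as in Remark~\ref{rem:Spinc.stru.ind}. Then $(N,E|_N,f|_N)$ is a geometric cycle for $K^{\textnormal{{geo,}} K}_{\bullet}(Y)$, lying in degree $d$ when $x\in K_0^G(X)$ by the relation $\dim M=\dim N+\dim G/K$, and, by the surjectivity argument in the proof of Proposition~\ref{lem:iso.geo.ind}, $i[(N,E|_N,f|_N)]=[(M,E,f)]$.

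Finally I would invoke Lemma~\ref{lem:comm.diagram}: commutativity of~\eqref{eq:comm.K.homo} gives
\[
j\bigl(BD[(N,E|_N,f|_N)]\bigr)=BD\bigl(i[(N,E|_N,f|_N)]\bigr)=BD[(M,E,f)]=x,
\]
while the $K$-equivariant Baum--Douglas map sends $[(N,E|_N,f|_N)]$ to $(f|_N)_*([D_{N,E|_N}])\in K_d^K(Y)$. Since $j$ is identified with $\KInd_K^G$ in the paragraph preceding the statement, this yields $x=\KInd_K^G\bigl((f|_N)_*([D_{N,E|_N}])\bigr)$, completing the proof. The only step needing a moment's thought --- and the one I would flag as the main (if minor) obstacle --- is the compatible choice of the slice $N$ of $M$ so that $f$ restricts to a $K$-map $N\to Y$ and Lemma~\ref{lem:comm.diagram} applies verbatim; this is exactly what essential uniqueness of Abels' slice (Remark~\ref{rem:Slice.unique}) secures. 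Everything else is a direct consequence of Theorem~\ref{thm:eq.K-homology}, Proposition~\ref{lem:iso.geo.ind} and Lemma~\ref{lem:comm.diagram}.
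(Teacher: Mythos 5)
Your proposal is correct and takes essentially the same approach as the paper, which treats this proposition as a direct reformulation of the preceding summary (Theorem~\ref{thm:eq.K-homology}, the surjectivity step in Proposition~\ref{lem:iso.geo.ind}, and Lemma~\ref{lem:comm.diagram}) and gives no separate proof. Your explicit description of the slice as $N=(\phi\circ f)^{-1}(eK)$ is a helpful clarification of what the paper leaves implicit when it says one can ``choose a global $K$-slice $N$'' so that $f|_N$ maps into $Y$.
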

	
	\vspace{0.5cm}
	\section{Poincar\'e Duality}
	\label{sec:PD}
	\subsection{Almost-connected Groups}
	We begin by remarking that Phillips' \cite{Phillips2}
	generalisation of equivariant $K$-theory, denoted $\bar K^\bullet_G(X)$, which is defined using finite-dimensional 
	equivariant vector bundles over $X$, is not necessarily the
	same as the definition via $C^*$-algebras, namely $K^\bullet_G(X):= K_\bullet(C_0(X) \rtimes G).$ However, in the case of almost-connected Lie groups, these groups are isomorphic,
	as will be argued presently.
	
	\begin{lemma}\label{lem:phil}
		Let $G$ be an almost-connected Lie group
		acting properly and cocompactly on a smooth manifold $X$. Then 
		\begin{equation}
		\label{eq:K-theory.iso}
		\bar K^\bullet_G(X) \simeq  K^\bullet_G(X) = K_\bullet(C_0(X) \rtimes G).
		\end{equation}
	\end{lemma}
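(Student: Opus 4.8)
The plan is to reduce the statement to a comparison of two equivariant $K$-theories over a compact manifold with an action of a compact Lie group, where the isomorphism is classical. Concretely, by Abels' global slice theorem (Theorem \ref{thm:abels}) there is a maximal compact subgroup $K \le G$ and a $K$-slice $Y \subseteq X$ with $X \cong G \times_K Y$, and since the action is cocompact, $Y$ is a compact $K$-manifold. First I would record the behaviour of each side under this induction. On the $C^*$-side, the Green--Julg-type / Morita-invariance statement $C_0(G \times_K Y) \rtimes G \sim_{\mathrm{Morita}} C(Y) \rtimes K$ gives $K^\bullet_G(X) = K_\bullet(C_0(X) \rtimes G) \simeq K_\bullet(C(Y) \rtimes K) = K^\bullet_K(Y)$. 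On the Phillips side, the pullback/restriction along $Y \hookrightarrow X$ together with the induced-bundle construction $E \mapsto G \times_K E$ sets up an equivalence of categories between finite-dimensional $G$-equivariant vector bundles on $G\times_K Y$ and finite-dimensional $K$-equivariant vector bundles on $Y$ (this is essentially the content used already in Definition \ref{def:geometricinduction} and Proposition \ref{lem:iso.geo.ind}), hence $\bar K^\bullet_G(X) \simeq \bar K^\bullet_K(Y)$.

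Second, I would invoke the fact that for a \emph{compact} group $K$ acting on a compact space $Y$, Phillips' equivariant $K$-theory built from finite-dimensional equivariant vector bundles agrees with the operator-algebraic definition: $\bar K^\bullet_K(Y) \simeq K_\bullet(C(Y) \rtimes K)$. This is standard --- it is the equivariant Swan theorem identifying equivariant vector bundles with finitely generated projective modules over $C(Y) \rtimes K$ (equivalently, over the fixed-point algebra), so no pathology of the kind occurring for non-compact $G$ (as in the example $(S^1\times S^1)\rtimes_\alpha \Z$ mentioned in the introduction) can arise. Chaining the three identifications
\[
\bar K^\bullet_G(X) \;\simeq\; \bar K^\bullet_K(Y) \;\simeq\; K_\bullet(C(Y)\rtimes K) \;\simeq\; K_\bullet(C_0(X)\rtimes G) \;=\; K^\bullet_G(X)
\]
yields \eqref{eq:K-theory.iso}, and I would check that the composite is the natural comparison map (so the isomorphism is not merely abstract) by tracking a single equivariant bundle $E$ on $Y$ through: it is sent to the projective module of its sections on one route and, on the other, to the induced bundle $G\times_K E$ on $X$ and thence to its module of sections, and these agree because sections of $G\times_K E$ over $G\times_K Y$ are exactly the $K$-invariant $E$-valued sections over $G\times Y$.

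The main obstacle I anticipate is not any single deep input but the bookkeeping in the first step: one must verify that the induction equivalence of categories of finite-dimensional equivariant bundles is honest (every $G$-bundle on $G\times_K Y$ really is induced from its restriction to $Y$, with no extra $G$-bundles appearing), and that this is compatible with direct sums and stabilisation so that it descends to Grothendieck groups and matches the Morita equivalence on the $C^*$-side. For the $G$-bundle-to-$K$-bundle direction this is immediate from Abels' diffeomorphism $X \cong G\times_K Y$ applied fibrewise; for well-definedness one uses that the $K$-diffeomorphism type of $Y$ is intrinsic to the $G$-action (Remark \ref{rem:Slice.unique}). A secondary point to be careful about is that $\bar K_G$ is being asserted to be defined (as a group) at all in this generality --- but cocompactness plus the slice description reduces this to the unproblematic compact-group case, which is precisely why the hypotheses of the lemma are what they are.
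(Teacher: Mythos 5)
Your argument is correct and runs along essentially the same lines as the paper's proof: both reduce via Abels' slice theorem to a compact $K$-slice $Y$, use the Rieffel--Green Morita equivalence of $C_0(X)\rtimes G$ with $C(Y)\rtimes K$ together with Green--Julg on the $C^*$-algebraic side, and then compare with the Phillips side. The one point of divergence is how the Phillips side is handled. The paper simply cites Phillips \cite{Phillips2} for the single isomorphism $\bar K^0_G(X) \simeq K_K^0(Y)$ and then chains it with the Morita/Green--Julg identification. You instead unwind that citation into two steps: a bundle-induction equivalence $\bar K^\bullet_G(X) \simeq \bar K^\bullet_K(Y)$ (every $G$-equivariant finite-rank bundle on $G\times_K Y$ is induced from its restriction to $Y$, by Abels and Remark \ref{rem:Slice.unique}) followed by the equivariant Swan theorem identifying $\bar K^\bullet_K(Y)$ with $K_\bullet(C(Y)\rtimes K)$ for compact $K$ and compact $Y$. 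This is more self-contained and makes transparent exactly where compactness of $K$ and cocompactness of the action enter, but it is essentially reproducing (a sketch of) Phillips' own argument rather than offering an independent route. The paper's version is shorter; yours is more illuminating to a reader unfamiliar with \cite{Phillips2}. Your final remark about tracking an actual bundle $E$ through the maps to see that the composite is the natural comparison map is a genuine addition that the paper does not spell out, and is worth keeping.
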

	
	\begin{proof}
		Without loss of generality, assume $\bullet = 0$. Under the hypotheses of the lemma, $C_0(X) \rtimes G$ and $C_0(Y) \rtimes K$ are strongly
		Morita-equivalent, by Rieffel-Green \cite{Rieffel}, where $Y$ is a global slice given by Abels' theorem \cite{Abels}.
		By Green-Julg \cite{Julg}, one has 
		$$
		K_0(C_0(Y) \rtimes K) \simeq K_K^0(Y).
		$$
		By definition, 
		$$
		K^0_G(X) = K_0(C_0(X) \rtimes G),
		$$
		therefore
		$$
		K^0_G(X) \simeq K_K^0(Y).
		$$
		But Phillips' \cite{Phillips2} proves that $\bar K^0_G(X) \simeq K_K^0(Y)$, so we conclude.
	\end{proof}
	
	\begin{remark}
		We do not need to specify whether or not the crossed product $C_0(X)\rtimes G$ is reduced, as the action of $G$ on $X$ is proper. See also Remark~\ref{rmk:prop.crossed.prod}.
	\end{remark}
	
	\begin{remark}
		Concretely, the isomorphism \eqref{eq:K-theory.iso} can be expressed as the composition
		\[
		\bar  K^0_G(X)\rightarrow KK^G(C_0(X), C_0(X))\rightarrow KK(C_0(X)\rtimes G, C_0(X)\rtimes G)\rightarrow K_0(C_0(X)\rtimes G),
		\]
		where the first map takes a $G$-equivariant vector bundle to its continuous sections, 
		the second is the descent map~(\ref{eq:descent}) in $KK$-theory and the last is left multiplication via $KK$-product by the canonical projection \eqref{eq:projection} in $C_0(X)\rtimes G$.
		Given a $G$-equivariant vector bundle $V$ over $X$, the images in the above sequence of maps are 
		\[
		[V]\mapsto[(\Gamma(V), 0)]\mapsto[(\Gamma(V)\rtimes G, 0)]\mapsto[p\cdot(\Gamma(V)\rtimes G)].
		\]
		The induction on $\bar K^0$ is simpler than the induction on $K^0$. 
		In fact, if $E$ is a $K$-equivariant vector bundle over $Y$ and $p^K$, $p^G$ are the canonical projections in $C(Y)\rtimes K$, $C_0(X)\rtimes G$ respectively, then in the diagram
		\begin{equation}
		\label{eq:ind.barK.K}
		\begin{CD}
		\bar K_{K}^0(Y) @>>> K_K^0(Y)\\
		@VVV @VVV\\
		\bar K_{G}^0(X) @>>> K_G^0(X)
		\end{CD}
		\end{equation}
		we have 
		\begin{equation}
		\label{eq:ind.barK.K.ele}
		\begin{CD}
		[V] @>>> [p^K\cdot(\Gamma(V)\rtimes K)]\\
		@VVV @VVV\\
		[G\times_K V] @>>> [p^G\cdot(\Gamma(G\times_K V)\rtimes G)].
		\end{CD}
		\end{equation}
		In particular, if $V$ is the rank $1$ trivial vector bundle over $Y$, then $G\times_K V$ is also a rank $1$ vector bundle over $X$ and they correspond to the canonical projections $[p^K]\in K^0_K(Y)$ and $[p^G]\in K^0_G(X)$ respectively.
		The induction map $K_K^0(Y)\rightarrow K_G^0(X)$ can also be understood using the isomorphism  
		\begin{equation}
		\label{eq:GXKX}
		K_0(C_0(X)\rtimes G)\simeq K_d(C_0(X)\rtimes K)
		\end{equation}
		proved in~\cite{Raeburn} and the fact that $X$ is $K$-homeomorphic to $Y\times\R^d$, where $K$ acts on $\R^d$ via the diffeomorphism $\R^d\cong G/K$ (see Remark~5.19 in~\cite{Raeburn}).
		Finally, the induction maps \eqref{eq:ind.barK.K}-\eqref{eq:ind.barK.K.ele} imply that both $K^0_G(X)$ and $\bar K^0_G(X)$ are $R(K)$-modules and that~(\ref{eq:K-theory.iso}) is an isomorphism of $R(K)$-modules.
	\end{remark}
	
	We now prove an equivariant Poincar\'e duality under the same hypotheses as above.

	\begin{theorem}[Poincar\'e duality: cocompact case] \label{thm:PD1} Let $G$ be an almost-connected Lie group
		acting properly and cocompactly on a smooth manifold $X$. 
		Then there are isomorphisms
		\begin{align}\label{PD}
		{\mathcal{P}\mathcal{D}_X}_*: K^G_{\bullet}(C_\tau(X)) &\simeq K^G_{\bullet}(X);\\
		\mathcal{P}\mathcal{D}_X^*: K_G^{\bullet}(C_\tau(X)) &\simeq K_G^{\bullet}(X) \label{PD'},
		\end{align}
		where $C_\tau(X)$ is the algebra of continuous sections, tending to 0 at $\infty$, of the complex Clifford bundle associated with 
		the tangent bundle $TX$ of $X$. 
	\end{theorem}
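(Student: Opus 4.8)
The plan is to reduce Theorem~\ref{thm:PD1} to the already-understood case of a compact group acting on a slice, and then transport that duality up to $G$ along the two induction isomorphisms already at our disposal: Phillips' isomorphism on the $K$-theory of the Clifford algebra, and the isomorphism on analytic $K$-homology of Proposition~\ref{prop:ana.ind.iso}. Fix a maximal compact subgroup $K\subseteq G$. By Abels' Theorem~\ref{thm:abels} there is a global $K$-slice $Y\subseteq X$ with $X\cong G\times_K Y$, and by Remark~\ref{rem:Slice.unique} the $K$-manifold $Y$ is determined up to $K$-equivariant diffeomorphism; recall that $d=\dim G/K$. Under the fibration $X\to G/K$ with fibre $Y$, the tangent bundle restricts over the slice as $TX|_Y\cong TY\oplus(Y\times\kp)$, so that $C_\tau(X)|_Y\cong C_\tau(Y)\mathbin{\widehat{\otimes}}\mathrm{Cliff}(\kp)$ and hence $C_\tau(X)\cong G\times_K\bigl(C_\tau(Y)\mathbin{\widehat{\otimes}}\mathrm{Cliff}(\kp)\bigr)$; since $G/K$ is $G$-$\Spinc$, the fibre factor $\mathrm{Cliff}(\kp)$ is $K$-equivariantly Morita equivalent to $\C$ via the spinor module $S$ of the lift $\widetilde{\Ad}\colon K\to\Spinc(\kp)$, which is what accounts for the degree shifts by $d$ appearing below.

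First I would assemble three isomorphisms. (i)~Phillips' induction theorem \cite{Phillips2} (as quoted in the Introduction), transferred to the present notation by way of the identification $\bar K^\bullet_G=K^\bullet_G$ of Lemma~\ref{lem:phil}, supplies
\[
\iota\colon K^K_{\bullet}(C_\tau(Y))\xrightarrow{\ \simeq\ }K^G_{\bullet+d}(C_\tau(X)),\qquad \iota'\colon K_K^{\bullet}(C_\tau(Y))\xrightarrow{\ \simeq\ }K_G^{\bullet+d}(C_\tau(X)).
\]
(ii)~For the \emph{compact} group $K$, equivariant Poincar\'e duality for the $K$-manifold $Y$ is a consequence of Kasparov's work \cite{Kasparov}: there are isomorphisms ${\mathcal{P}\mathcal{D}_Y}_*\colon K^K_{\bullet}(C_\tau(Y))\xrightarrow{\ \simeq\ }K^K_{\bullet}(Y)$ and $\mathcal{P}\mathcal{D}_Y^*\colon K_K^{\bullet}(C_\tau(Y))\xrightarrow{\ \simeq\ }K_K^{\bullet}(Y)$, each implemented by the Kasparov product with the fundamental class $[Y]_K$ of Example~\ref{ex:fundamental.class}. (iii)~Proposition~\ref{prop:ana.ind.iso}, together with the discussion in Section~\ref{sec:equivalence} in which the resulting map is named $\KInd_K^G$, provides the $K$-homology induction isomorphism $\KInd_K^G\colon K^K_{\bullet}(Y)\xrightarrow{\ \simeq\ }K^G_{\bullet+d}(X)$, and, in the same way, a $K$-theory induction isomorphism $K_K^{\bullet}(Y)\xrightarrow{\ \simeq\ }K_G^{\bullet+d}(X)$ (which for almost-connected $G$ is the one implicit in Lemma~\ref{lem:phil}). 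I would then simply \emph{define} ${\mathcal{P}\mathcal{D}_X}_*:=\KInd_K^G\circ{\mathcal{P}\mathcal{D}_Y}_*\circ\iota^{-1}$, and let $\mathcal{P}\mathcal{D}_X^*$ be the analogous composite of $(\iota')^{-1}$, $\mathcal{P}\mathcal{D}_Y^*$, and the $K$-theory induction isomorphism. Since the two shifts by $d$ cancel, these are isomorphisms $K^G_{\bullet}(C_\tau(X))\simeq K^G_{\bullet}(X)$ and $K_G^{\bullet}(C_\tau(X))\simeq K_G^{\bullet}(X)$, which is exactly \eqref{PD} and \eqref{PD'}; independence of the auxiliary slice $Y$ follows once more from Remark~\ref{rem:Slice.unique}.

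To see that these maps deserve the name ``Poincar\'e duality'', I would then check that ${\mathcal{P}\mathcal{D}_X}_*$ is implemented by the Kasparov product with the fundamental class $[X]_G$ of Example~\ref{ex:fundamental.class} --- equivalently, with the class of the $\Spinc$-Dirac-type operator canonically attached to $C_\tau(X)$. Because, by its definition, ${\mathcal{P}\mathcal{D}_X}_*=\KInd_K^G\circ(-\cap[Y]_K)\circ\iota^{-1}$, this amounts to showing that $\iota$ and $\KInd_K^G$ intertwine the cap products, i.e.\ that the square with upper edge $-\cap[Y]_K\colon K^K_{\bullet}(C_\tau(Y))\to K^K_{\bullet}(Y)$, lower edge $-\cap[X]_G\colon K^G_{\bullet+d}(C_\tau(X))\to K^G_{\bullet+d}(X)$, left edge $\iota$ and right edge $\KInd_K^G$ commutes. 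In Section~\ref{sec:equivalence} it was shown (see the Example containing \eqref{eq:Khomology.XY} and Lemma~\ref{lem:comm.diagram}) that $\KInd_K^G$ carries the twisted $\Spinc$-Dirac class $[\partial_Y]$ on the slice to the corresponding class $[\partial_X]$ on $X$, and in particular $[Y]_K$ to $[X]_G$; granting this, commutativity of the square reduces to the associativity and functoriality of the Kasparov descent and restriction maps out of which both $\iota$ and $\KInd_K^G$ are built. The contravariant statement \eqref{PD'} is handled in the same way with the roles of $K$-theory and $K$-homology interchanged, or else deduced from \eqref{PD} by $KK(-,\C)$-duality.

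The step I expect to be the main obstacle is exactly this last compatibility --- matching Phillips' $K$-theoretic induction on the Clifford algebra with the analytic $K$-homology induction of Section~\ref{sec:equivalence} through Kasparov's compact-group duality. Concretely, it calls for a ``Clifford-algebra'' analogue of Lemma~\ref{lem:comm.diagram}: using the decomposition $C_\tau(G\times_K Y)\cong G\times_K\bigl(C_\tau(Y)\mathbin{\widehat{\otimes}}\mathrm{Cliff}(\kp)\bigr)$ and an adiabatic-limit argument identifying the Kasparov product of the fibrewise Dirac operator with $b(\partial_{G/K})$ as the Dirac-type operator of $C_\tau(X)$ --- just as in the Example around \eqref{eq:Khomology.XY} --- one must check that the Kasparov cycles on the two sides represent the same class. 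Once that square is known to commute, Theorem~\ref{thm:PD1} follows immediately.
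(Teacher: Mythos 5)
Your proof follows the paper's own argument essentially step for step: reduce to a slice $Y$ via Abels' theorem, apply Phillips' induction on $K$-theory of the Clifford algebra, compose with Kasparov's compact-group Poincar\'e duality on $Y$ and the analytic $K$-homology induction of Proposition~\ref{prop:ana.ind.iso}, and cancel the two degree shifts by $d$. The extra paragraphs checking that the resulting map is cap product with the fundamental class are not needed for Theorem~\ref{thm:PD1} itself (the paper defers that geometric description to Corollary~\ref{cor:spinc}), and your route to the first induction isomorphism --- trivialising $\mathrm{Cliff}(\kp)$ by the spinor module of $\widetilde{\Ad}$, which uses the standing $\Spinc$ hypothesis on $G/K$ --- is an acceptable substitute for the paper's use of the Morita equivalence $C_\tau(X)\sim C_0(TX)$ together with the decomposition $TX=G\times_K[TY\oplus\kp]$.
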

	
	\begin{proof}
		We use  Abels' global slice theorem to see that $X$ is diffeomorphic to  $G\times_K Y$, where 
		$K$ is a maximal compact subgroup of $G$ and $Y$ is a smooth compact manifold. 
		Using Morita equivalence of $C_{\tau}(X)$ and $C_0(TX)$~{\cite[Theorem~2.7]{Kasparov2016}}, the decomposition 
		\begin{equation}
		\label{eq:decom.tangent}
		TX=G\times_K[TY\oplus\mathfrak p],
		\end{equation}
		where $\mathfrak p\oplus\mathfrak k=\mathfrak g$, and Phillips' result \cite{Phillips1,Phillips2} proving that induction from $K$ to $G$ in $K$-theory is an isomorphism,
		we obtain \begin{equation}
		\label{eq:K.theory.ind}
		K^{K}_{\bullet}(C_\tau(Y)) \simeq K^{G}_{\bullet+d}(C_\tau(X)).
		\end{equation}
		Then, \eqref{eq:K.theory.ind} using equivariant Poincar\'e duality in the compact case (see \cite{Kasparov}), 
		$$
		{\mathcal P\mathcal D}: K^K_{\bullet}(C_\tau(Y))\simeq K^K_{\bullet}(Y),
		$$
		together with analytic induction from $K$ to $G$, which is an isomorphism
		$$
		K^{K}_{\bullet}(Y) \simeq K^{G}_{\bullet+d}(X),
		$$
		we deduce the isomorphism \eqref{PD}. The second isomorphism is proved analogously.
	\end{proof}
	
	We note that the above proof only used induction on the {\it analytic} version of $K$-homology. However, the equivalence between the two $K$-homologies can be used to give the following geometric interpretation of Poincar\'e duality.

	\begin{corollary}
		\label{cor:spinc}
		Let $G$ be an almost-connected Lie group acting properly and cocompactly on a manifold $X$. If $X$ and $G/K$ admit  $G$-equivariant $\textnormal{Spin}^c$-structures, we have
		\begin{equation}
		\label{eq:PD.geo.K}
		\mathcal{P}\mathcal{D}_X: K^0_G(X)\rightarrow K_a^{G, geo}(X), \quad [E]\mapsto [X]\cap [E]:=[X,E\otimes\C,\id_X],
		\end{equation}
		where $E$ is a finite-rank $G$-equivariant vector bundle over $X$ and $a=\dim X$ (mod $2$).
	\end{corollary}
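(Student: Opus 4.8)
The plan is to combine the $K$-theory Poincaré duality of Theorem \ref{thm:PD1}, specifically the isomorphism ${\mathcal{P}\mathcal{D}_X}_*\colon K^G_\bullet(C_\tau(X)) \simeq K^G_\bullet(X)$, with the identification of analytic and geometric $K$-homology from Theorem \ref{thm:eq.K-homology}, in order to obtain an explicit geometric description of the duality map on classes represented by vector bundles. The key observation is that when $X$ carries a $G$-equivariant $\Spinc$-structure, the Clifford algebra bundle $C_\tau(X)$ is Morita-equivalent (in a $G$-equivariant, $KK$-compatible way) to $C_0(X)$ itself; this is a $\Spinc$-version of the fact already used in the proof of Theorem \ref{thm:PD1} that $C_\tau(X)$ is Morita-equivalent to $C_0(TX)$, combined with the Thom isomorphism for the $\Spinc$-bundle $TX\to X$. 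Concretely, the spinor bundle $S_X$ of the $\Spinc$-structure gives a $G$-equivariant Morita equivalence $C_\tau(X)\sim C_0(X)$, which induces $K^G_\bullet(C_\tau(X))\simeq K^G_\bullet(C_0(X)) = K^{-\bullet}_G(X)$ (up to the dimension shift by $a = \dim X \bmod 2$). Under this chain of isomorphisms, the abstract duality map ${\mathcal{P}\mathcal{D}_X}_*$ becomes a map $K^0_G(X)\to K^G_a(X)$, and the task is to show it sends $[E]$ to the fundamental-class cap product.

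First I would make precise the statement that the fundamental class $[X]_G = [(X, X\times\C, \id_X)]\in K^{\textnormal{geo},G}_a(X)$ corresponds, under the Baum--Douglas isomorphism $BD$ of Theorem \ref{thm:eq.K-homology}, to the analytic class $[D_X]\in K^G_a(X)$ of the $\Spinc$-Dirac operator on $X$ — this is immediate from the definition of $BD$ in the excerpt. Next I would recall that, on the analytic side, Kasparov's Poincaré duality is implemented by Kasparov product with a dual-Dirac/fundamental class $[\Delta_X]\in KK^G(C_\tau(X)\otimes C_0(X), \C)$ (or, equivalently for $\Spinc$-manifolds, with $[D_X]$ under the Morita identification), so that ${\mathcal{P}\mathcal{D}_X}_*(y) = y\otimes_{C_\tau(X)}[\Delta_X]$. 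Then for $y = [E]$ viewed as an element of $K_G^0(X) \simeq K^G_0(C_\tau(X))$ via the Morita equivalence, the associativity of the Kasparov product gives $[E]\otimes_{C_\tau(X)}[\Delta_X] = [E]\cap[D_X]$, where the cap product is the one defined in Section \ref{sec:equivalence} (there written $[E]\cap[D]$). By the excerpt's identification of $[E]\cap[D_X]$ with $[D_{X,E}]$ and the fact that $BD$ sends $[(X, E\otimes\C, \id_X)]$ to exactly $(\id_X)_*([D_{X,E}]) = [D_{X,E}]$, this yields
\begin{equation*}
\mathcal{P}\mathcal{D}_X([E]) = [X]\cap[E] = [X, E\otimes\C, \id_X]\in K^{\textnormal{geo},G}_a(X),
\end{equation*}
which is the assertion. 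The hypothesis that $G/K$ is $G$-$\Spinc$ enters precisely because Theorem \ref{thm:eq.K-homology} (hence the geometric reformulation on the right-hand side) requires it.

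The main obstacle I expect is bookkeeping rather than conceptual: one must check that the various Morita equivalences, Thom isomorphisms, and dimension shifts are all $G$-equivariant and compatible with the specific normalizations in Theorem \ref{thm:PD1} and in the Baum--Douglas map, and in particular that the shift by $d = \dim G/K$ appearing in the induction isomorphisms cancels correctly so that the net degree is $a = \dim X \bmod 2$ (note $\dim X = \dim Y + d$ and $\dim Y$ governs the compact-group duality on the slice). A secondary technical point is to justify that the $\Spinc$ Morita equivalence $C_\tau(X)\sim C_0(X)$ intertwines Kasparov's fundamental class $[\Delta_X]$ with the Dirac class $[D_X]$; this is standard over a point and for compact groups (it is implicit in \cite{Kasparov, Kasparov2016}), and for almost-connected $G$ it can be reduced to the slice $Y$ via the induction isomorphisms $i$, $j$ of Section \ref{sec:equivalence} together with the commutativity of diagram (\ref{eq:comm.K.homo}), exactly as in the proof of Theorem \ref{thm:PD1}. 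Once these compatibilities are in place the corollary follows formally.
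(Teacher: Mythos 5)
Your approach matches the paper's: both rely on the Morita equivalence $C_\tau(X)\sim C_0(X)$ afforded by the $G$-$\Spinc$-structure, then invoke the Poincaré duality isomorphism of Theorem \ref{thm:PD1} together with the geometric--analytic equivalence of Theorem \ref{thm:eq.K-homology}. You supply more detail than the paper (which leaves implicit the identification of the duality map with the cap product via the Kasparov product and the Baum--Douglas map), but the underlying argument is the same.
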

	\begin{proof}
		Since $X$ is a proper $G$-cocompact $\Spinc$-manifold, $C_{\tau}(X)$ is Morita-equivalent to $C_0(X)$. Thus we may write
		\begin{equation}
		\mathcal{P}\mathcal{D}_X: K^0_G(X)\rightarrow K_a^{G}(X).
		\end{equation}
		Then the claim follows by the equivalence between geometric and analytic $K$-homologies given in Theorem~\ref{thm:eq.K-homology}.
	\end{proof}
	
	\begin{remark}
		When $X$ is a proper and cocompact $G$-manifold, not necessarily $\Spinc$, the first isomorphism ~(\ref{PD}) is equivalent to 
		\begin{equation}
		\label{eq:op.sym}
		K^G_{\bullet}(X)\simeq K^G_{\bullet}(C_0(TX))\simeq K_{\bullet}(C_0(TX)\rtimes G).
		\end{equation}
		This isomorphism is closely related to a generalisation of the Atiyah-Singer index formula, since it is an operator-to-symbol map.
		In fact, recall that for a $K$-slice $Y$ of $X$, $K$-invariant pseudo-differential operators represent classes in $K^K_{\bullet}(Y)$, while their symbols give rise to classes in $K^K_{\bullet}(C_0(TY))$ (cf. {\cite[Section~5]{AS}}). 
		Then, for a class of $G$-invariant pseudo-differential operators, which we shall denote by $[D_X]\in K^G_{\bullet}(X)$, with symbol class $[\sigma(D_X)]\in KK^G_{\bullet}(C_0(X), C_0(TX))$ (cf. \cite{Kasparov2016}),  
		the map 
		\[
		[D_X]\mapsto [p]\otimes_{C_0(X)\rtimes G}j^G[\sigma(D_X)]
		\] 
		realises the isomorphism~\eqref{eq:op.sym}.
		Note that using Theorem~\ref{thm:eq.K-homology} and the commutative diagram
		\[
		\begin{CD}
		K_0^G(M) @>>> K_0^G(C_0(T^*M))\\
		@Vf_*VV @Vf_*VV\\
		K_0^G(X) @>>> K_0^G(C_0(T^*X)),
		\end{CD}
		\]
		we see that every element of $K^G_{\bullet}(X)$ is represented by a $G$-invariant $\Spinc$-Dirac operator.
	\end{remark}
	
	\begin{remark}
		When $X$ is a proper and cocompact $G$-manifold, not necessarily $\Spinc$, the second isomorphism~(\ref{PD'}) maps a $G$-equivariant vector bundle $[E]\in K_G^0(X)$ to
		\[
		[d_{X, E}]=[d_X]\cap[E]\in K^0_G(C_{\tau}(X)),
		\] 
		where $[d_X]\in K^0_G(C_{\tau}(X))$ is the Dirac element defined using the de Rham operator on $X$ in~\cite{Kasparov}.
		This can be easily verified using induction: 
		\[
		\begin{CD}
		K_0^K(Y) @>>> K^0_K(C_{\tau}(Y))\\
		@VVV @VVV\\
		K_0^G(X) @>>> K^0_G(C_{\tau}(X)),
		\end{CD}
		\]
		given by 
		\[
		\begin{CD}
		[E|_Y] @>>> [d_Y]\cap[E|_Y]\\
		@VVV @VVV\\
		[E] @>>> [d_X]\cap[E],
		\end{CD}
		\]
		with the help of the fact that $[d_X]$ is mapped to $[d_Y]$ under $K$-homology induction adapted to Clifford algebras.
	\end{remark}
	
	\begin{remark}
		From~{\cite[Theorem~4.1]{Phillips2}}, for any proper $G$-space $X$ (not necessarily cocompact), there is an isomorphism  
		\[
		K^{\bullet}_G(X)\simeq K_K^{\bullet}(X\times\mathfrak g/\mathfrak k)\simeq K_K^{\bullet+d}(X).
		\]
		This is the same map as~\eqref{eq:GXKX}.
		Replacing $X$ by $TX$ and using $KK$-equivalence of $C_0(TX)$ and $C_{\tau}(X)$ and applying Theorem~4.11 of {\cite{Kasparov}} and Theorem~\ref{thm:PD1}, we obtain the dual version of Phillips' isomorphism for a proper \emph{cocompact} $G$-space $X$:
		\[
		K_{\bullet}^G(X)\simeq RK^K_{\bullet+d}(X) \simeq RK^K_{\bullet}(X\times\mathfrak g/\mathfrak k).
		\]
	\end{remark}
	
	For compact manifolds $\bar Y$ with boundary $\partial \bar Y \ne \emptyset$, Poincar\'e duality is proved in \cite{Kasparov}:

	\begin{theorem} [Poincar\'e duality for $K$-compact manifolds with boundary]\label{thm:PD2}
		Assume that $\bar Y$ is a smooth compact manifold with boundary $\partial \bar Y$ and that a compact group $K$ acts on $\bar Y$ smoothly. Set $Y = \bar Y\setminus \partial \bar Y$. Then there are isomorphisms
		\begin{align*}
		{\mathcal{P}\mathcal{D}_Y}_*:  K^K_{\bullet}(C_\tau(Y)) & \simeq K^K_{\bullet}(\bar Y);\\
		\mathcal{P}\mathcal{D}_Y^*:  K_K^{\bullet}(C_\tau(Y)) & \simeq K_K^{\bullet}(\bar Y).
		\end{align*}
	\end{theorem}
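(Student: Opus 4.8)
The statement is essentially Kasparov's (see \cite{Kasparov}); my plan is to exhibit both isomorphisms as Kasparov products with a single $K$-equivariant fundamental class for the pair of algebras $\bigl(C(\bar Y),\,C_\tau(Y)\bigr)$, i.e.\ Poincar\'e duality in the algebraic sense of Connes--Skandalis and Kasparov. Two features distinguish the situation from the closed $\Spinc$-case used, e.g., in the proof of Theorem~\ref{thm:PD1}: $\bar Y$ is only assumed smooth, and it has a boundary. The first is dealt with by building the fundamental class out of the Hodge--de Rham operator $d+d^{*}$ rather than a $\Spinc$-Dirac operator, so that no orientation or $\Spinc$-datum is needed; the second by passing to a cylindrical-end completion of the interior.

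Concretely: choose a $K$-invariant Riemannian metric on $\bar Y$ that is of product type on a $K$-invariant collar $\partial\bar Y\times[0,1)$ and attach the half-cylinder $\partial\bar Y\times[1,\infty)$, obtaining a complete $K$-manifold that is $K$-equivariantly diffeomorphic to $Y$. On a complete manifold $d+d^{*}$ is essentially self-adjoint on $L^{2}$-forms; $C(\bar Y)$ acts by multiplication through the proper collapse map onto $\bar Y$, $C_\tau(Y)$ acts by Clifford multiplication, and both commute modulo compacts with the bounded transform $b(d+d^{*})=(d+d^{*})(1+(d+d^{*})^{2})^{-1/2}$, so the triple defines a class
\[
[d_{\bar Y}]\in KK^{K}\bigl(C(\bar Y)\otimes C_\tau(Y),\,\C\bigr).
\]
A $K$-invariant tubular neighbourhood of the diagonal $\bar Y\hookrightarrow\bar Y\times\bar Y$, identified $K$-equivariantly with a neighbourhood of the zero section of $TY$, furnishes the dual Bott-type class $[\beta_{\bar Y}]\in KK^{K}\bigl(\C,\,C(\bar Y)\otimes C_\tau(Y)\bigr)$ from the fibrewise Clifford symbol in the normal directions; over the collar this is modelled on a Euclidean half-space, where $C_\tau$ is treated as for $\R^{n}$.

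The core of the proof is then to check that $[\beta_{\bar Y}]$ and $[d_{\bar Y}]$ are inverse to one another, in the sense that
\[
[\beta_{\bar Y}]\otimes_{C_\tau(Y)}[d_{\bar Y}]=1_{C(\bar Y)},\qquad
[\beta_{\bar Y}]\otimes_{C(\bar Y)}[d_{\bar Y}]=1_{C_\tau(Y)}
\]
in $KK^{K}$. As in the closed case this is a local computation: a deformation onto the normal bundle of the diagonal (Kasparov's ``rotation'' homotopy) reduces each identity to the standard Bott--Dirac computation, performed $K$-equivariantly in slice charts, for $\R^{\dim Y}$ in the interior and for a Euclidean half-space on the collar. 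Granted this, the cap products with $[d_{\bar Y}]$ and with $[\beta_{\bar Y}]$ are mutually inverse, giving for all $K$-$C^{*}$-algebras $A,B$ natural isomorphisms $KK^{K}_{\bullet}\bigl(C(\bar Y)\otimes A,B\bigr)\simeq KK^{K}_{\bullet}\bigl(A,C_\tau(Y)\otimes B\bigr)$ and $KK^{K}_{\bullet}\bigl(C_\tau(Y)\otimes A,B\bigr)\simeq KK^{K}_{\bullet}\bigl(A,C(\bar Y)\otimes B\bigr)$. Putting $A=B=\C$ in the first yields ${\mathcal{P}\mathcal{D}_Y}_{*}$ and in the second $\mathcal{P}\mathcal{D}_Y^{*}$, after translating the crossed-product $K$-theory implicit in the notation $K^{K}_{\bullet}(C_\tau(Y))$, $K_{K}^{\bullet}(\bar Y)$, etc.\ into the relevant $KK^{K}$-groups via Green--Julg, just as in the proof of Lemma~\ref{lem:phil}.

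The step I expect to be the real obstacle is the boundary: verifying that $[d_{\bar Y}]$ meets the compactness requirements of Definition~\ref{def:ana.K.homology} on the cylindrical-end model, and --- more delicately --- that the two product identities persist over the collar $\partial\bar Y\times[0,1)$, where the relevant local model is a Euclidean half-space. This half-space computation is precisely what \cite{Kasparov} supplies; everything else is the closed-manifold machinery combined with the slice theorem for actions of the compact group $K$.
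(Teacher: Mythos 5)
The paper offers no proof of Theorem~\ref{thm:PD2}; it simply imports Kasparov's Poincar\'e duality for manifolds with boundary by citation. Your proposal is a faithful reconstruction of that Kasparov argument (cylindrical-end completion of $Y$, the de Rham duality class $[d_{\bar Y}]$ and the Bott class $[\beta_{\bar Y}]$ as mutually inverse elements, rotation homotopy for the local verification, half-space model over the collar), so it takes essentially the same route the paper itself relies on, just spelled out rather than referenced.
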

	

	The proof of the following theorem is similar to that of Theorem \ref{thm:PD1} but using instead Theorem \ref{thm:PD2}  and will be omitted.
	Observe that by Abels' global slice theorem, $\bar X = G\times_K \bar Y,$  $\partial \bar X = G\times_K \partial\bar Y,$
	and  $X = G\times_K Y.$
	
	\begin{theorem} [Poincar\'e duality for $G$-cocompact manifolds with boundary]\label{thm:PD3}
		Assume that $\bar X$ is a smooth $G$-cocompact manifold with boundary $\partial \bar X$ and that an almost-connected Lie group $G$ acts on $\bar X$ smoothly. Set $X = \bar X\setminus \partial \bar X$. Then there are isomorphisms
		\begin{align*}
		{\mathcal{P}\mathcal{D}_X}_*:  K^G_{\bullet}(C_\tau(X)) & \simeq K^G_{\bullet}(\bar X);\\
		\mathcal{P}\mathcal{D}_X^*:  K_G^{\bullet}(C_\tau(X)) & \simeq K_G^{\bullet}(\bar X).
		\end{align*}
	\end{theorem}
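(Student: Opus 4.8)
The plan is to mimic the proof of Theorem~\ref{thm:PD1} verbatim, replacing each ingredient about closed manifolds by its counterpart for manifolds with boundary. By Abels' global slice theorem in the boundary version (Lemma~\ref{lem:boundaryslice}), we may write $\bar X = G\times_K \bar Y$ with $\partial\bar X = G\times_K \partial\bar Y$, $X = G\times_K Y$, and $K$ a maximal compact subgroup of $G$; here $\bar Y$ is a compact $K$-manifold with boundary. The decomposition of the tangent bundle $T\bar X = G\times_K[T\bar Y\oplus\mathfrak p]$ still holds (with $\mathfrak k\oplus\mathfrak p=\mathfrak g$), so $C_\tau(X)$ is again, fibrewise over $G/K$, the induction of $C_\tau(Y)$ tensored with the Clifford algebra of $\mathfrak p$.

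First I would establish the $K$-theory induction isomorphism $K^K_{\bullet}(C_\tau(Y))\simeq K^G_{\bullet+d}(C_\tau(X))$, exactly as in Theorem~\ref{thm:PD1}: apply the Morita equivalence of $C_\tau(X)$ with $C_0(TX)$ from {\cite[Theorem~2.7]{Kasparov2016}}, use the tangent bundle decomposition above, and invoke Phillips' result \cite{Phillips1,Phillips2} that induction from $K$ to $G$ in $K$-theory is an isomorphism. Next I would invoke Poincar\'e duality in the compact-with-boundary case, Theorem~\ref{thm:PD2}, to obtain $\mathcal P\mathcal D\colon K^K_{\bullet}(C_\tau(Y))\simeq K^K_{\bullet}(\bar Y)$. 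Finally I would apply the analytic induction isomorphism on $K$-homology, $K^K_{\bullet}(\bar Y)\simeq K^G_{\bullet+d}(\bar X)$, which is Proposition~\ref{prop:ana.ind.iso} applied to the manifold-with-boundary $\bar X$ (using Lemma~\ref{lem:boundaryslice} in place of the usual Abels theorem; the $KK$-theoretic argument via Theorem~\ref{thm:Kas} is insensitive to the presence of a boundary, since it only uses $\sigma$-compactness and the proper free action of $G$ on itself). Composing these three isomorphisms yields the first duality~\eqref{PD} in the boundary setting, and the cohomological version is proved by the dual chain of isomorphisms.

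The one point that needs a little care --- and the place I expect the mild friction to be --- is checking that each of the three building blocks is genuinely available for manifolds with boundary. Phillips' induction isomorphism in $K$-theory is stated for arbitrary proper $G$-spaces, so the boundary causes no problem there. Theorem~\ref{thm:PD2} is quoted directly from \cite{Kasparov} for the compact-with-boundary case, so that is in hand. For the analytic $K$-homology induction, one must confirm that the proof of Proposition~\ref{prop:ana.ind.iso} goes through with $\bar X$ in place of $X$: the only input is that $\bar X \cong G\times_K \bar Y$ (Lemma~\ref{lem:boundaryslice}) and that the sequence of applications of Kasparov's Theorem~\ref{thm:Kas}, together with the Dirac--dual-Dirac elements of the special manifold $G/K$, is formal in the fibre and so unaffected by $\partial\bar Y$. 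Granting this, the composition is immediate and the argument is complete; since it is a line-by-line repetition of Theorem~\ref{thm:PD1}, the details are omitted.
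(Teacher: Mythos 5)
Your proposal is correct and coincides with the paper's intended argument: the paper explicitly omits the proof, noting only that it follows that of Theorem~\ref{thm:PD1} with Theorem~\ref{thm:PD2} in place of the closed compact case and using $\bar X = G\times_K \bar Y$, $\partial\bar X = G\times_K\partial\bar Y$, $X = G\times_K Y$. Your care in verifying that each ingredient (the boundary version of Abels' slice theorem via Lemma~\ref{lem:boundaryslice}, Phillips' $K$-theory induction, Kasparov's compact-with-boundary duality, and the $KK$-theoretic analytic induction) survives the passage to manifolds with boundary is exactly the content the paper leaves to the reader.
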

	
	We now generalise Poincar\'e duality to the case 
	when $X$ is not necessarily $G$-cocompact.
	
	Recall that the representable equivariant $K$-theory of a $G$-space $X$, as defined by Fredholm complexes in \cite{Segal}, denoted by $RK^\bullet_G(X)$, is equal to $K^G_\bullet(C_0(X))$ when $X$ is $G$-cocompact, and is defined as the direct limit
	$$
	RK^\bullet_G(X) :=\lim_{Z\subset X} K^G_\bullet(C_0(Z))
	$$
	over the inductive system of all cocompact $G$-subsets $Z \subset X$.

	Theorem~\ref{thm:PD3} allows us to prove Poincar\'e duality for non-cocompact manifolds, the main result of this section:
	
	\begin{theorem} [Poincar\'e duality for non-cocompact manifolds]
		\label{thm:PD.NC}
		Assume that $X$ is a complete Riemannian manifold on which an almost-connected Lie group $G$ acts isometrically.
		Then one has isomorphisms
		\begin{align}
		{\mathcal{P}\mathcal{D}_X}_*:  K^G_{\bullet}(C_\tau(X)) & \simeq RK^G_{\bullet}(X);\\
		\mathcal{P}\mathcal{D}_X^*:  K_G^{\bullet}(C_\tau(X)) & \simeq RK_G^{\bullet}(X),
		\label{eq:2}
		\end{align}
		where the right-hand side denotes the representable versions of equivariant $K$-homology and $K$-theory.
	\end{theorem}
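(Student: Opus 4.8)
The plan is to reduce the non-cocompact statement to the $G$-cocompact case with boundary, Theorem~\ref{thm:PD3}, by exhausting $X$ by $G$-cocompact manifolds with boundary and passing to a limit. First, the $G$-action is proper, being isometric on the complete manifold $X$, so Abels' global slice theorem, Theorem~\ref{thm:abels}, gives $X\cong G\times_K Y$ with $K\subseteq G$ a maximal compact subgroup and $Y$ a (not necessarily compact) $K$-manifold. Averaging a smooth proper function $Y\to[0,\infty)$ over the compact group $K$ yields a $K$-invariant such function, which induces a $G$-invariant smooth proper function $h\colon X\to[0,\infty)$; by Sard's theorem pick regular values $0<r_1<r_2<\cdots\to\infty$ and set $\bar X_n:=h^{-1}([0,r_n])$, a smooth $G$-cocompact manifold with $G$-invariant boundary $\partial\bar X_n=h^{-1}(r_n)$ and interior $X_n:=h^{-1}([0,r_n))$. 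Then $X=\bigcup_n X_n=\bigcup_n\bar X_n$; the family $\{\bar X_n\}$ is cofinal among all $G$-cocompact subsets of $X$ (any such subset has bounded image under $h$); and $C_\tau(X)=\varinjlim_n C_\tau(X_n)$ as a $C^*$-algebra, since every section vanishing at infinity is norm-approximated by sections supported in some $X_n$.

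Applying Theorem~\ref{thm:PD3} to each $\bar X_n$ gives isomorphisms $\mathcal{PD}_{X_n}\colon K^G_\bullet(C_\tau(X_n))\simeq K^G_\bullet(\bar X_n)$ and $K_G^\bullet(C_\tau(X_n))\simeq K_G^\bullet(\bar X_n)$, and the next task is to make these natural in $n$. The closed inclusion $\bar X_n\hookrightarrow\bar X_{n+1}$ induces the restriction homomorphism $C_0(\bar X_{n+1})\to C_0(\bar X_n)$; the open inclusion $X_n\hookrightarrow X_{n+1}$ induces the extension-by-zero inclusion of the ideal $C_\tau(X_n)\subseteq C_\tau(X_{n+1})$, compatible with the Clifford data since the Clifford bundle over $X_n$ is the restriction of that over $X_{n+1}$. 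As Kasparov's duality isomorphism is Kasparov product with a duality class, naturality amounts to checking that the duality class of $\bar X_{n+1}$ localizes along these inclusions to that of $\bar X_n$; this is the same compatibility across a boundary hypersurface that underlies Kasparov's treatment of the boundary case, Theorem~\ref{thm:PD2}, and it uses the $G$-equivariant collar neighbourhood of $\partial\bar X_n$ inside $\bar X_{n+1}$ (\cite{Kankaanrinta}). This produces a commuting ladder relating the systems $\{K^G_\bullet(\bar X_n)\}_n$, $\{K_G^\bullet(\bar X_n)\}_n$ to $\{K^G_\bullet(C_\tau(X_n))\}_n$, $\{K_G^\bullet(C_\tau(X_n))\}_n$.

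It remains to take (co)limits. By cofinality of $\{\bar X_n\}$ and the definition of the representable theories, $RK^G_\bullet(X)$ and $RK_G^\bullet(X)$ are the appropriate colimit, respectively limit, over the $\bar X_n$. On the other side, $KK$-theory is continuous with respect to the inductive limit $C_\tau(X)=\varinjlim_n C_\tau(X_n)$ in each variable (invoking the Milnor $\varprojlim^1$ exact sequence in the variable where the limit is a projective one, the representable theory being defined precisely so as to be compatible with it), so $K^G_\bullet(C_\tau(X))$ and $K_G^\bullet(C_\tau(X))$ are identified with the corresponding (co)limits of their level-$n$ counterparts. Combining this with the ladder of the previous paragraph, the map $\mathcal{PD}_X$ obtained by passing to the limit of the $\mathcal{PD}_{X_n}$ is the first asserted isomorphism, and the second follows in the same way.

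I expect the main obstacle to be the naturality step of the second paragraph: verifying that Kasparov's Poincar\'e-duality class restricts and localizes compatibly under the structure maps of the exhaustion in the manifold-with-boundary setting, that is, that localizing the global Clifford algebra $C_\tau(X)$ at the pieces $X_n$ is compatible with the duality classes across the hypersurfaces $\partial\bar X_n$. The $G$-equivariant collar structure makes this possible, but the bookkeeping at the boundaries is the delicate part; once it is in place, the limiting argument is routine.
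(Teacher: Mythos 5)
Your proposal follows essentially the same route as the paper's (explicitly labeled sketch) proof: exhaust $X$ by $G$-cocompact manifolds with boundary, apply Theorem~\ref{thm:PD3} level-wise to obtain a coherent system of isomorphisms, and pass to the direct limit on the $K$-homology side and Milnor's $\lim^1$ inverse limit on the $K$-theory side. You fill in details the paper leaves implicit — notably the explicit construction of the $G$-invariant exhaustion via a proper $K$-averaged function and Sard's theorem, the cofinality argument, and the identification of $C_\tau(X)$ as $\varinjlim C_\tau(X_n)$ — and you correctly single out the naturality of the duality classes along the exhaustion (the "coherent system" the paper simply asserts) as the point requiring the real work.
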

	
	\begin{proof}
		We sketch the proof here. 
		Consider an exhaustive increasing
		sequence of cocompact $G$-manifolds with boundary $\bar X_j$
		such that $X=\bigcup \bar X_j$.
		Theorem \ref{thm:PD3} gives us a coherent system of isomorphisms:
		\begin{align*}
		{\mathcal{P}\mathcal{D}_j}_*:  K^G_{\bullet}(C_\tau(X_j)) & \simeq K^G_{\bullet}(\bar X_j);\\
		\mathcal{P}\mathcal{D}_j^*:  K_G^{\bullet}(C_\tau(X_j)) & \simeq K_G^{\bullet}(\bar X_j).
		\end{align*}
		The isomorphism ${\mathcal{P}\mathcal{D}_X}_*$ is obtained as the direct limit isomorphism of the first
		coherent system of isomorphisms above, and 
		$\mathcal{P}\mathcal{D}_X^*$ is Milnor's $\lim^1$ inverse limit of the second coherent system of isomorphisms above.
	\end{proof}
	
	Our result is related to the Poincar\'e duality in~{\cite[Section~4]{Kasparov}} as follows.
	
	\begin{remark}
		Theorem~\ref{thm:PD.NC} is a generalisation of Corollary~4.11 in~\cite{Kasparov}. 
		Kasparov's first Poincar\'e duality ({\cite[Theorem~4.9]{Kasparov}}) states that for any locally compact group $G$,
		\[
		R KK^G(Y\times X; A, B)\simeq R KK^G(Y; A\hat\otimes C_{\tau}(X), B),
		\]
		where $X$ is a complete Riemmanian manifold with an isometric $G$-action, $Y$ is a $\sigma$-compact $G$-space and $A, B$ are separable $G$-$C^*$-algebras. 
		Here, 
		$$RKK(X; A, B):=\cR KK(X; A(X), B(X))$$
		(cf. {\cite[Section~2.19]{Kasparov}}).
		When $A=B=\C$ and $Y$ is a point, this isomorphism reduces to the second isomorphism of Theorem~\ref{thm:PD.NC}.
	\end{remark}
	
	We end this section by generalising the Atiyah-Segal completion theorem~\cite{ASe} using induction on $K$-homology and Poincar\'e duality.
	
	\begin{remark}
		Let $K$ be a compact Lie group and $A, B$ be $G$-$C^*$-algebras. $KK^K(A, B)$ is an $R(K)$-module. Denote by $KK^K(A, B)^{\wedge}$ the $I(K)$-adic completion of $KK^K(A, B)$ in the sense of~\cite{ASe}. 
		In~{\cite[Theorem~3.19]{AK}}, a generalisation of the Atiyah-Segal completion theorem is proved:
		\[
		KK^K(A, B)^{\wedge} \simeq RKK^K(EK; A, B), 
		\]
		where $KK^K(A, B)$ is a finite $R(K)$-module. 
		Note that choosing finite $R(K)$-modules $A=\C$ and $B=C(Y)$ for a compact $K$-manifold $Y$, we obtain\[
		RKK^K(EK; \C, C(Y))\simeq RK^0(EK\times_K Y),
		\] 
		and the Atiyah-Segal completion theorem~\cite{ASe} is recovered. 
		
		Now assume $A, B$ to be $G$-algebras.
		Note that $EG=G\times_K EK$ and that there is the following induction isomorphism, from Section 3.6 of~\cite{Kasparov}: 
		\[
		RKK^K(EK; A, B)\simeq RKK^G(G\times_K EK; A, B).
		\] 
		\begin{enumerate}
			\item Letting $A=\C$ and $B=C(X)$, we have 
			\[
			KK^K_{\bullet}(\C, C_0(X))\simeq K_G^{\bullet}(X)
			\]
			as $R(K)$-modules. So its $I(K)$-adic completion is isomorphic to $RKK^G(EG; \C, C(X))$, and 
			\[
			K_G^{\bullet}(X)^{\wedge}\simeq RKK^G_{\bullet}(EG; \C, C_0(X))\simeq RK^{\bullet}(EG\times_G X),
			\]
			whence we recover Phillips' result~{\cite[Theorem~5.3]{Phillips2}}.
			\item Letting $A=C(X)$ and $B=\C$, we have from induction on $K$-homology that
			\[
			KK^K_{\bullet}(C_0(X), \C)\simeq K^G_{\bullet}(X)
			\]
			as $R(K)$-modules. So its $I(K)$-adic completion is isomorphic to $RKK^G(EG; C(X), \C)$, and 
			\[
			K^G_{\bullet}(X)^{\wedge}\simeq RKK^G_{\bullet}(EG; C_0(X), \C)\simeq KK_{\bullet}(C_0(EG\times_G X), C_0(BG)),
			\]
			whence we obtain a new Atiyah-Segal completion result for $G$-equivariant $K$-homology.
		\end{enumerate}
		
		Note that if $X$ is $\Spinc$, (1) and (2) are related by Poincar\'e duality (cf.~{\cite[Theorem~4.10]{Kasparov}}). 
	\end{remark}
	
	\subsection{Discrete Groups}
	Let $G$ be a locally compact group and $X$ a proper $G$-manifold where $X/G$ is compact. 
	As mentioned previously, the $K$-theory $K^*_G(X)$ fails to be a generalised cohomology in general. 
	In~\cite{Phillips1}, Phillips constructed a cohomology theory using $G$-vector bundles with Hilbert space fibres and shows in an example that finite-dimensional vector bundles are not enough. The example is a semidirect product of $\Z^2$ by the $4$-torus, which is neither discrete nor linear. 
	However, it has been verified that finite-dimensionality is enough if $G$ is either discrete (Theorem 3.2 in \cite{LO}) or linear (\cite{Phillips2}), and moreover that 
	$K^*_G(X)$ is a generalised cohomology theory in these cases.
	
	Assume $G$ to be a discrete or linear group and $X$ a proper $G$-manifold with compact quotient. Every element of $K$-theory $K^*_G(X)$ is represented by a compact $G$-equivariant vector bundle $E$ over $X$~\cite{LO, Phillips2} for $*=0$. 
	(For $*=1$ a $K$-theory element is given by $G$-vector bundle over the suspension of $X$.)
	Let $D$ be the de Rham operator on $X$ representing the Dirac element $[D]$ in the equivariant $K$-homology $K^*_G(C_{\tau}(X))$ in the sense of~\cite{Kasparov} Definition-lemma 4.2. 
	Twisting $E$ with the de Rham operator $D$ gives rise to a twisted Dirac element and hence the homomorphism below: 
	\begin{equation}
	\label{eq:PD-map}
	\mathcal{P}\mathcal{D}: K^*_G(X)\rightarrow K^*_G(C_{\tau}(X)) \qquad [E]\mapsto[D_E].
	\end{equation}
	
	\begin{theorem}
		\label{thm:PD.discrete}
		Let $G$ be a discrete group acting properly on a $G$ manifold $X$ with compact quotient. The Poincar\'e homomorphism~(\ref{eq:PD-map}) is an isomorphism, defining the Poincar\'e duality $\mathcal P\mathcal D: K^{\bullet}_G(X)\simeq K^{\bullet}_G(C_{\tau}(X)).$
	\end{theorem}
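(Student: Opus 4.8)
The plan is to prove the theorem by a Mayer--Vietoris induction over a finite proper $G$-CW structure on $X$, reducing to the case of a single equivariant tube and then invoking Poincar\'e duality for a finite isotropy group. Since $G$ is discrete and acts properly and cocompactly, $X$ admits a finite $G$-CW structure whose cells have the form $G\times_H D^n$ with $H\leq G$ finite. Both functors in play are $G$-(co)homology theories on such complexes: the source $K^\bullet_G(-)$ is the theory built from finite-dimensional equivariant vector bundles, which for discrete $G$ is a generalised equivariant cohomology theory by L\"uck--Oliver~\cite{LO} (Theorem~3.2) and which, by the discrete analogue of Lemma~\ref{lem:phil} (again~\cite{LO}), agrees with the $C^*$-algebraic $K$-theory $K_\bullet(C_0(-)\rtimes G)$; the target $K^\bullet_G(C_\tau(-))$ satisfies excision because a $G$-invariant open subset $U\subseteq X$ gives an ideal $C_\tau(U)$ of $C_\tau(X)$, so a $G$-invariant open cover yields a semisplit extension of $G$-algebras and hence a six-term Mayer--Vietoris sequence in $\cR KK^G$-homology (that is, in $KK^G(-,\C)$), exactly as in the discrete setting of~\cite{BaumHigsonSchick.eq} and in~\cite{Kasparov}. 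The first task is to set up both sequences and to observe that the Poincar\'e map $\mathcal{PD}=[D_X]\cap(-)$ is compatible with their restriction maps.

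The base case is $X=G\times_H V$ for $V$ a finite-dimensional orthogonal $H$-representation. Using the induction isomorphisms $K^\bullet_G(G\times_H V)\simeq K^\bullet_H(V)$ and $K^\bullet_G(C_\tau(G\times_H V))\simeq K^\bullet_H(C_\tau(V))$ --- the latter because $C_\tau(G\times_H V)$ is $G$-equivariantly the algebra induced from the $H$-algebra $C_\tau(V)$ --- the claim reduces to the assertion that $\mathcal{PD}\colon K^\bullet_H(V)\to K^\bullet_H(C_\tau(V))$ is an isomorphism for the compact group $H$. This is Kasparov's equivariant Poincar\'e duality for compact groups~\cite{Kasparov}, the same input used to prove Theorem~\ref{thm:PD1}; since $V$ is $H$-equivariantly contractible one may even retract to the origin and verify it directly. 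One must check here that the induction isomorphisms intertwine the two Poincar\'e maps, i.e. that the de Rham--Dirac class $[D_{G\times_H V}]$ restricts to $[D_V]$ along the slice; this is the finite-group, Clifford-algebra counterpart of the compatibility already used in Section~\ref{sec:Induction on analytic K-homology} (cf. the statement that $[d_X]$ maps to $[d_Y]$ under $K$-homology induction adapted to Clifford algebras).

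For the inductive step, write $X=X'\cup T$ with $T\cong G\times_H D^n$ an open tube attached to a subcomplex $X'$ with fewer cells, so that $X'\cap T$ has the form $G\times_H(S^{n-1}\times\R)$. Placing the Mayer--Vietoris sequences of the two theories side by side gives a commutative ladder whose vertical arrows are $\mathcal{PD}$: the induction hypothesis handles $X'$, the base case handles $T$, and $X'\cap T$ is dealt with either by a secondary induction on $n$ or again by Kasparov's compact-group duality applied to $H$ acting on $S^{n-1}\times\R$. The five lemma then gives the isomorphism for $X$, and the fundamental-class formula $\mathcal{PD}([E])=[D_E]$ is preserved throughout by the naturality noted in the first paragraph.

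The main obstacle is the commutativity of this ladder, that is, the \emph{naturality of the Poincar\'e map} with respect both to the restriction maps and, above all, to the Mayer--Vietoris connecting homomorphisms. Concretely, one must show that the de Rham--Dirac element localises correctly to $G$-invariant open subsets and that cap product with it commutes with the boundary maps of the extensions $0\to C_\tau(U)\to C_\tau(X)\to C_\tau(X\setminus U)\to 0$, together with the parallel statement on the vector-bundle side. This is where Kasparov's $\cR KK$ formalism and the functoriality of the Dirac element~\cite{Kasparov} must be invoked with care; the remaining steps are bookkeeping. One should also ensure at the outset that the connecting maps of the cohomology theory of~\cite{LO} coincide with those of the $C^*$-algebraic picture, so that the two Mayer--Vietoris ladders are genuinely comparable.
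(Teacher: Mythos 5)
Your proposal follows essentially the same strategy as the paper's proof: reduce by a Mayer--Vietoris/Five Lemma induction over an equivariant decomposition of $X$ into tubes $G\times_H U$ with $H$ finite, use the L\"uck--Oliver result that $K^\bullet_G(-)$ is a generalised cohomology theory for discrete $G$ (together with the induction isomorphisms on both sides), and land on Kasparov's Poincar\'e duality for the compact (finite) group $H$. The only cosmetic difference is that you phrase the decomposition in terms of a finite $G$-CW structure while the paper uses a ``good cover'' by slices $G\times_{H_i}U_i$ with $U_i\cong\R^m$; these amount to the same thing for the induction. You are right that the commutativity of the Mayer--Vietoris ladder under the Poincar\'e map --- in particular its compatibility with the boundary maps --- is the step requiring care; the paper asserts it without spelling out the details, and your explicit flagging of it is a reasonable addition rather than a divergence.
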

	
	Every proper compact $G$-manifold $X$ is covered by $G$-slices of the form $G\times_H U$ where $H$ is a compact subgroup of $G$ and $U$ is an $H$-space:
	\[
	X=\cup_{i=1}^N G\times_{H_i}U_i.
	\]
	We may assume that all $U_i$s and their nonempty ``intersections" are homeomorphic to $\R^m$ for some $m$. 
	Here, $U_{ij}$ is an nonempty intersection of $U_i$ and $U_j$ means that 
	\[
	[G\times_{H_i}U_i]\cap [G\times_{H_j}U_j]=G\times_{H_{ij}}U_{ij}
	\] 
	for some compact subgroup $H_{ij}$ of $G.$
	
	A cover $\{G\times_{H_i}U_i\}_{i=1}^N$ satisfying the above conditions is called a {\em good cover} of $X.$
	
	As one would expect, we will apply Mayer-Vietoris sequence and the Five Lemma to prove Poincar\'e duality. This means that we need to extend the Poincar\'e duality map to  non-$G$-cocompact manifolds. 
	See~\cite{Bott-Tu} for the proof Poincar\'e duality for ordinary (co)-homology and \cite{Higson-Roe} 11.8.11 for the $K$-theory analogue for compact manifolds in the nonequivariant case. 
	
	For every open proper $G$-manifold $U$, denote by $RK^{\bullet}_{G}(C_{\tau}(U))$ the equivariant $K$-homology with compact support defined by the direct limit over all $G$-compact submanifolds $L$ in $U$ with respect to the restriction map $C_{\tau}(U)\rightarrow C_{\tau}(L):$
	\[
	RK^{\bullet}_{G}(C_{\tau}(U))=\lim_{L\subset M} K^{\bullet}_{G}(C_{\tau}(L)).
	\]
	The equivariant $K$-theory of $U$, by definition, is represented by a $G$-vector bundle $E$ that is trivial outside a $G$-compact subset $C\subset U$. 
	Thus, the Poincar\'e duality map~(\ref{eq:PD-map}) is extended to a non $G$-compact manifold $U\subset M$ by 
	\begin{equation}
	\label{eq:PD.map.open}
	\mathcal P \mathcal D: K^{\bullet}_G(U)\rightarrow RK^{\bullet}_{G}(C_{\tau}(U)) \qquad [E]\mapsto \{[D_E|_{C\cup L}]\}_{L\subset U, L/G \text{  compact}},
	\end{equation}
	where the inductive limit is taken over $G$-compact submanifolds $C\cup L\subset U.$
	
	\begin{proof}[Proof of Theorem~\ref{thm:PD.discrete}]
		Because $G$ is discrete, it follows from~\cite{LO} that the equivariant $K$-theory $K^*_G(X)$ is a generalised cohomology theory. 
		The (analytic) equivariant homology $RK^{\bullet}_G(C_{\tau}(X))$
		is known to be a generalised homology theory from the work of Kasparov.
		Therefore for two open $G$-submanifolds $U$ and $V$ of $X$, we have the Mayer-Vietoris sequences for both $K$-theory and $K$-homology, with the corresponding terms related (vertically in the following diagram) by Poincar\'e duality maps~(\ref{eq:PD.map.open}):
		\[
		\begin{CD}
		\to K^j_G(U\cap V) @>>> K^j_G(U)\oplus K^j_G(V) @>>> K^j_G(U\cup V) \to\\
		@VV \mathcal P\mathcal D V @VV \mathcal P\mathcal D V @VV \mathcal P\mathcal D V\\
		\to RK^{j}_{G}(C_{\tau}(U\cap V)) @>>>RK^{j}_{G}(C_{\tau}(U))\oplus RK^{j}_{G}(C_{\tau}(V)) @>>>RK^{j}_{G}(C_{\tau}(U\cup V)) \to
		\end{CD}
		\]
		%
		where  $j=0, 1$. This diagram commutes. 
		
		Choose a good open cover $\{G\times_{H_i} U_i\}$ for $X$. Using the Five Lemma and induction, it suffices to show that (\ref{eq:PD.map.open}) is an isomorphism when $U$ is homeomorphic to $G\times_{H}\R^m$ for some $m$ and a finite subgroup $H$ of $G$ acting on $\R^m$: that is, show that
		\begin{equation}
		\label{eq:PD.mid}
		\mathcal P\mathcal D:K^{\bullet}_G(G\times_H \R^m)\rightarrow RK^{\bullet}_{G}(C_{\tau}(G\times_H \R^m))
		\end{equation}
		is an isomorphism.
		Recall the following induction isomorphisms for discrete groups:
		\begin{align}
		K_H^{\bullet}(\R^m)&\simeq K_G^{\bullet}(G\times_H\R^m);\\
		RK^{H}_{\bullet}(\R^{2m})&\simeq RK^{G}_{\bullet}(G\times_H\R^{2m}).
		\label{induction}
		\end{align}
		For first isomorphism see~\cite{LO} and for the second see~\cite{BaumHigsonSchick.eq}. The latter isomorphism implies
		\begin{multline*}
		RK^{\bullet}_{H}(C_{\tau}(\R^m))\simeq RK^{H}_{\bullet}(T\R^m)\simeq RK^G_{\bullet}(G\times_H T\R^m)\simeq \\
		RK^G_{\bullet}(T(G\times_H\R^m))\simeq RK_G^{\bullet}(C_{\tau}(G\times_H\R^m)),
		\end{multline*}
		where the third isomorphism follows from $G$ being discrete.
		Thus to prove~(\ref{eq:PD.mid}) it only remains to show that
		\[
		K_H^{\bullet}(\R^m)\simeq RK_{H}^{\bullet}(C_{\tau}(\R^{m})).
		\]
		But because $\R^m$ admits an $H$-Spin$^c$ structure, we have 
		\[
		RK^{H}_{\bullet}(C_{\tau}(\R^{m}))\simeq RK_{H}^{\bullet+m}(\R^{m}),\qquad K^{H}_{\bullet+m}(\R^{m})\simeq K_{H}^{\bullet}(C_{\tau}(\R^{m}))
		\] 
		and so we need only to show 
		$K_H^{\bullet}(C_{\tau}(\R^m))\simeq RK_{H}^{\bullet}(\R^{m})$. This follows from Corollary~4.11 in \cite{Kasparov} or~(\ref{eq:2}), which completes the proof.
	\end{proof}
	
	\begin{remark}
		Our Poincar\'e duality between equivariant $K$-theory and $K$-homology for proper actions of an almost-connected group or a discrete group is optimal in the following sense.
		There are two reasons preventing us from generalising the Poincar\'e duality to a setting beyond discrete groups and almost connected groups. 
		One is that equivariant $K$-theory given by equivariant vector bundles may not be a generalised cohomology theory and so the Mayer-Vietoris sequence cannot be applied.
		The other is that the induction homomorphism~(\ref{induction}) on $K$-homology from an arbitrary compact subgroup $H$ to $G$ is not an isomorphism in general.  
	\end{remark}
	
	\vspace{0.3cm}
	
	\section{Index Theory of $K$-homology Classes}
	\label{sec:Index theory of a K-homology class}
	
	Let $G$ be an almost-connected Lie group acting properly and cocompactly on a manifold $X$.
	Recall that elements of $K^G_0(X)$ are represented by abstract elliptic  operators.
	In this section, we study index theory associated to each element in $K^G_0(X)$ and its relation to induction from a maximal compact subgroup.
	
	\subsection{Higher Index}
	Let $G$ and $X$ be as above and $K$ a maximal compact subgroup of $G$. Let $Y$ be a global $K$-slice of $X$.
	For a proper cocompact action there exists a \emph{cut-off} function, a nonnegative function
	$c\in C_c^{\infty}(X)$ whose integral over every orbit is $1$:
	\[
	\int_Gc(g^{-1}x)dg=1,  \qquad g\in G,\, x\in X. 
	\]
	The function $c$ gives rise to an idempotent $p$ in $C_0(X)\rtimes G$, satisfying
	\begin{equation}
	\label{eq:projection}
	(p(g))(x):=\sqrt{\mu(g^{-1})c(g^{-1}x)c(x)},\qquad g\in G, \,x\in X.
	\end{equation}
	Here $\mu$ is the modular function on $G$; that is, if $dg$ is the left Haar measure on $G$ and $s\in G$, then $\mu$ satisfies $d(gs)=\mu(s)dg.$ 
	Note that the $K$-theory class $[p]$ of $p$ in $K_0(C_0(X)\rtimes G)$ is independent of the choice of $c$.
	
	\begin{definition}[\cite{Kasparov83}]
		The higher index map $\ind_G: K_{\bullet}^G(X)\rightarrow K_{\bullet}(C_r^*(G))$ is given by
		\begin{equation}
		\label{eq:higher.ind}
		\ind_G(x)=[p]\otimes_{C_0(X)\rtimes G} j_r^G(x),
		\end{equation}
		where $[p]\in K_0(C_0(X)\rtimes G)$ and $j^G_r$ is the descent homomorphism 
		\begin{equation}
		\label{eq:descent}
		j^G_r: KK^G_{\bullet}(A, B)\rightarrow KK_{\bullet}(A\rtimes_r G, B\rtimes_r G)
		\end{equation}
		for $A=C_0(X)$ and $B=\C.$
	\end{definition}
	
	\begin{remark}
		\label{rmk:prop.crossed.prod}
		The reduced group $C^*$-algebra $C^*_r(G)$ is isomorphic to the reduced crossed product $\C\rtimes_r G$.
		The higher index map~(\ref{eq:higher.ind}) is defined using the reduced, rather than the full, group $C^*$-algebra, with the idempotent $[p]\in K_0(C_0(X)\rtimes_r G)$.
		However, we have 
		\[
		C_0(X)\rtimes G\simeq C_0(X)\rtimes_r G
		\] 
		whenever $G$ acts properly (cf. {\cite[Theorem~3.13]{Kasparov}}). Thus we shall write $C_0(X)\rtimes G$ in place of $C_0(X)\rtimes_r G$. 
		Note also that since a compact group $K$ acts properly on a point, we have that $C_r^*(K)\simeq C^*(K)$, whereas this does not hold for a general non-compact group $G$. We say that $G$ is {\em amenable} when $C_r^*(G)\simeq C^*(G).$
	\end{remark}
	
	\begin{remark}
		If $X$ is a classifying space for proper actions, $\ind_G$ can be used to define the \emph{analytic assembly map} in the Baum--Connes \cite{Baum-Connes, BCH} and Novikov \cite{Kasparov} conjectures. For a compact group $K$, $K_{0}(C^*_r(K))$ can be identified with the representation ring $R(K)$, while $K_1(C^*_r(K)) = 0$, and $\ind_K$ is the usual equivariant index. 
	\end{remark}
	
	\begin{remark}
		When $G$ is an almost-connected Lie group, a classifying space for proper $G$-actions is $G/K$. Assume that $G/K$ is Spin. For every proper cocompact $G$-space $X,$ 
		there is a continuous $G$-equivariant proper map $p\colon X \to G/K$ (see Theorem \ref{thm:abels}). The map $p_*$ induced on $K$-homology relates the equivariant indices on $X$ and $G/K$ via the diagram
		\begin{equation}
		\label{eq:ind.X.G/K}
		\xymatrix{ K^G_\bullet(X) \ar[r]^-{\ind_G} \ar[d]_-{p_*} & K_{\bullet}(C^*_r(G)). \\
			K^G_\bullet(G/K) \ar[ur]_-{\ind_G}^{\cong} &
		}
		\end{equation}
		Since the Baum--Connes conjecture is true for almost-connected groups by
		Theorem~1.1 in \cite{CEN}, the equivariant index on $G/K$ defines an isomorphism 
		\begin{equation}
		\label{eq:iso.G/K.BC}
		\ind_G : K^G_\bullet(G/K) \cong  K_{\bullet}(C^*_r(G)).
		\end{equation}
	\end{remark}
	
	\subsection{Dirac Induction}
	
	We will assume throughout this subsection and the next that $G/K$ is Spin. We first recall the definition of the {\it Dirac induction} map, which is a special case of a $G$-index map when $X=G/K$. First, note that we have an isomorphism
	\[
	R(K)\simeq K_d^G(G/K),\qquad d=\dim G/K.
	\] 
	If $(\rho, V)$ is an irreducible representation of $K$, its image under this isomorphism is represented by the $K$-homology cycle of the $\Spinc$-Dirac operator $D_{G/K}$ on $G/K$ coupled with $V$. 
	Taking the index of this operator then defines the \emph{Dirac induction} of the class of $(\rho,V)$:
	\begin{equation}
	\label{eq:indG/K1}
	\DInd_K^G: R(K)\rightarrow K_d(C^*_r(G)), \qquad \DInd_K^G([V]):=[p]\otimes_{C_0(G/K)\rtimes G}j_r^G([D_{G/K}^V]).
	\end{equation}
	This map is an isomorphism of abelian groups by the Connes--Kasparov conjecture~\cite{Connes, Kasparov84, CEN}, proved for almost-connected groups in \cite{CEN} based on important earlier results in~\cite{Wassermann, Lafforgue}. 
	
	Equivalently, $\DInd_K^G$ can be constructed as follows. Let 
	\[
	[\partial_{G/K}]\in KK_d^G(C_0(G/K), \C)
	\] 
	denote the Dirac element on $G/K$. Then, as we show below, Dirac induction $R(K)\rightarrow K_d(C^*_r(G))$ can also be defined by
	\begin{equation}
	\label{eq:indG/K2}
	\DInd_K^G([\rho])=[\rho]\otimes_{C_0(G/K)\rtimes G} j_r^G([\partial_{G/K}]), \qquad [\rho]\in R(K)\simeq K_0(C_0(G/K)\rtimes G).
	\end{equation}
	The last isomorphism follows from the fact that $C^*_r(K)$ and $C_0(G/K)\rtimes G$ are Morita-equivalent. To see that definitions (\ref{eq:indG/K1}) and (\ref{eq:indG/K2}) coincide, the following lemma relating $C_r^*(K)$ and $C_0(G/K)\rtimes G$ on the level of $K$-theory is needed. 
	
	\begin{lemma}
		\label{lem:Morita.equivalence}
		Let $V$ be a finitely generated projective module over $C^*_r(K)$. Then $V$ corresponds to the projective $C_0(G/K)\rtimes G$-module $p\cdot (\Gamma(G\times_K V)\rtimes G)$ under the Morita equivalence of $C^*_r(K)$ and $C_0(G/K)\rtimes G.$
		That is, \[
		[V]\mapsto [p\cdot (\Gamma(G\times_K V)\rtimes G)]
		\] 
		in the isomorphism 
		\begin{equation}
		\label{eq:K.ind.K.G/K.G}
		K_0(C^*_r(K))\simeq K_0(C_0(G/K)\rtimes G).
		\end{equation}
	\end{lemma}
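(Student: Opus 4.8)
The plan is to make the Morita equivalence between $C^*_r(K)$ and $C_0(G/K)\rtimes G$ completely explicit at the level of modules, and then track where the canonical generator $[V]$, viewed as a finitely generated projective $C^*_r(K)$-module, goes. First I would recall the standard imprimitivity bimodule realising this Morita equivalence. Since $G$ acts properly and freely on $G$ with quotient $G/K$, and $K$ acts on $G$ on the right, the space $C_c(G)$ (completed appropriately) carries commuting actions making it a $(C_0(G/K)\rtimes G,\, C^*_r(K))$-imprimitivity bimodule $\mathcal{M}$; this is the Green imprimitivity bimodule, and it is precisely the one underlying Rieffel's and Green's theorems as invoked in the proof of Lemma~\ref{lem:phil}. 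The induced-module functor $-\otimes_{C^*_r(K)}\mathcal{M}$ (or its inverse) realises the isomorphism \eqref{eq:K.ind.K.G/K.G}, so it suffices to compute $\mathcal{M}^*\otimes_{C^*_r(K)}V$, or equivalently to exhibit an explicit isomorphism of $C_0(G/K)\rtimes G$-modules between this and $p\cdot(\Gamma(G\times_K V)\rtimes G)$.

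Next I would identify the two candidate modules concretely. On one side, $\Gamma(G\times_K V)$ is the $C_0(G/K)$-module of continuous sections of the associated bundle $G\times_K V\to G/K$, which by the standard description equals $\{\xi\in C_0(G,V): \xi(gk)=k^{-1}\xi(g)\}$ (the $K$-equivariant $V$-valued functions on $G$). Forming the crossed product $\Gamma(G\times_K V)\rtimes G$ and cutting down by the idempotent $p$ of \eqref{eq:projection} — built from a cut-off function $c$ for the proper $G$-action on $G/K$ — produces a concrete projective $C_0(G/K)\rtimes G$-module. On the other side, inducing $V$ across $\mathcal{M}$ gives a module whose elements are (completions of) $C_c(G,V)$-type functions with the $C_0(G/K)\rtimes G$-valued inner product determined by convolution. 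The key computation is to write down an explicit map between these two descriptions; the cut-off function $c$ enters precisely because it provides the averaging needed to pass from a genuine $L^2$-type induced module to the algebraically cut-down module $p\cdot(\Gamma(G\times_K V)\rtimes G)$. Concretely, one sends a section $s\in\Gamma(G\times_K V)$ to the element $g\mapsto \sqrt{\mu(g^{-1})}\,c(g^{-1}\cdot)^{1/2}\,c(\cdot)^{1/2}\cdot s(\cdot)\in C_c(G, \Gamma(G\times_K V))$ and checks this intertwines the module structures and inner products.

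I expect the main obstacle to be bookkeeping rather than conceptual: keeping the modular function $\mu$, the square roots from \eqref{eq:projection}, the left/right Haar measures, and the $K$-equivariance condition on sections all consistent, and verifying that the proposed map respects the $C_0(G/K)\rtimes G$-valued inner products on the nose (not merely up to isomorphism). A secondary point requiring care is that the statement is about $K$-theory classes, so strictly one only needs the correspondence up to stable isomorphism and up to the identification $K_0$ respects; thus if a direct module isomorphism proves awkward, I would fall back on checking that both constructions represent the same class in $K_0(C_0(G/K)\rtimes G)$ by tracing a rank-one projective module $V=\mathbb{C}$ (the trivial representation) through both routes — in that case $G\times_K V$ is the trivial line bundle, $p\cdot(\Gamma(G\times_K V)\rtimes G)=p\cdot(C_0(G/K)\rtimes G)=[p]$, and one verifies $[p]$ is the image of the class of the trivial $C^*_r(K)$-module under \eqref{eq:K.ind.K.G/K.G} — and then extend by naturality of the Morita equivalence over the $R(K)$-module structure, as already noted for \eqref{eq:K-theory.iso}.
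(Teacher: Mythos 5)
Your fallback plan is essentially the paper's proof: it first verifies the $V=\C$ case explicitly, identifying $[p]$ with the class of the trivial $C^*_r(K)$-module by lifting the cut-off function to an element $c\in C_c(G)$ satisfying $\langle c,c\rangle_{C_c(G,C_c(G/K))}=p$ and $\langle c,c\rangle_{C_c(K)}=1$ (citing Echterhoff--Emerson), and then extends to general $V$ by tensoring the resulting module isomorphisms over $C_c(K)$, i.e.\ using that \eqref{eq:K.ind.K.G/K.G} is an isomorphism of $R(K)$-modules. Your primary plan --- carrying the Green imprimitivity bimodule, the modular function, the square roots from \eqref{eq:projection}, and the $K$-equivariance of sections through simultaneously for general $V$ --- would also succeed, but the paper sidesteps that bookkeeping by confining the explicit inner-product computation to the rank-one case and letting the $R(K)$-module structure do the rest.
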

	
	\begin{proof}
		From~\cite{EE}, we know that the $C_0(G/K)\rtimes G$-module $p\cdot (C_0(G/K)\rtimes G)$ and the $C^*_r(K)$-module $\C$ correspond under Morita equivalence. In fact, $C_c(G)$ can be regarded as a right $C_c(K)$-module and a left $C_c(G, C_c(G/K))$-module. A cut-off function $c$ on $G/K$ with respect to the $G$-action can be lifted to an element in $C_c(G)$ satisfying 
		\[
		\langle c, c\rangle_{C_c(G, C_c(G/K))}=p,\qquad \langle c, c\rangle_{C_c(K)}=1.
		\] 
		So the projection $1$ in $C^*_r(K)$ and the projection $p$ in $C_0(G/K)\rtimes G$ correspond under Morita equivalence. 
		In fact, this follows from: 
		\[
		\cK(p\cdot (C_0(G/K)\rtimes G))\simeq p\cdot (C_0(G/K)\rtimes G)\cdot p\simeq\C,
		\]
		where $\cK$ is the space of compact operators on the Hilbert $C_0(G/K)\rtimes G$-module.
		See details in~{\cite[Example~5.2]{EE}}.
		Hence, the module $1\cdot C^*_r(K)$ corresponds to $p\cdot (C_0(G/K)\rtimes G)$ under Morita equivalence:
		\[
		1\cdot C^*_r(K)\sim_{M.E.} p\cdot (C_0(G/K)\rtimes G).
		\]
		Here $\cdot$ means module multiplication given by convolution with respect to $K$ or $G$. 
		One directly calculates that $1\cdot C^*_r(K)\simeq\C$, so that we have
		\begin{equation}
		\label{eq:ME}
		\C\sim_{M.E.}p\cdot (C_0(G/K)\rtimes G).
		\end{equation}
		In particular, $a\in\C$ is identified as an element $\tilde a:=p\cdot f_a$ in $p\cdot (C_0(G/K)\rtimes G)$ where $(f_a(g))(x)=a$ for $g\in G$ and $x\in G/K.$
		This means that the Lemma is true when $V=\C$ is the trivial representation.
		The proof for general $V$ then follows by observing that~\eqref{eq:K.ind.K.G/K.G} is an isomorphism of $R(K)$-modules. 
		In fact, \eqref{eq:ME} implies the following module isomorphisms: 
		\begin{equation}
		\label{1}
		p\cdot (\Gamma(G\times_K \C)\rtimes G)\otimes_{C_c(G, C_0(G/K))} C_c(G)\simeq \C;
		\end{equation}
		\begin{equation}
		\label{2}
		C_c(G)\otimes_{C_c(K)}\C\simeq p\cdot (\Gamma(G\times_K \C)\rtimes G).
		\end{equation}
		Applying the $C^*_r(K)$-module $V$ on the left in (\ref{1}) or on the right in (\ref{2}), we obtain versions of the isomorphisms (\ref{1})-(\ref{2}) with $\C$ replaced by $V$. Equivalently, we have: 
		\[
		V\sim_{M.E.}p\cdot(\Gamma(G\times_K V)\rtimes G).
		\]
		Here, every $v\in V$ is identified as an element $\tilde v:=p\cdot f_v$ in $p\cdot (\Gamma(G\times_K V)\rtimes G)$, where 
		\[
		(f_v(g))(hK)=h[(1, v)], \qquad g, h\in G,\, hK\in G/K,\, [(1, v)]\in G\times_K V.
		\]
		This completes the proof.
	\end{proof}
	
	\begin{remark}
		Under the Morita equivalence $C_r^*(K)\simeq C_0(G/K)\rtimes G$, the projection $p$ in $C_0(G/K)\rtimes G$ corresponds the constant function $1$ on $K$. If $\rho_0$ is the trivial representation of $K$, then under the isomorphism 
		\[
		R(K)\simeq K_0(C^*_r(K))\simeq K_0(C_0(G/K)\rtimes G), 
		\]
		we have 
		\[
		R(K)\ni [\rho_0]\leftrightarrow [p]\in K_0(C_0(G/K)\rtimes G).
		\]
	\end{remark}
	
	\begin{remark}
		\label{rmk:Morita.equivalent}
		The isomorphism $K_0(C^*_r(K))\rightarrow K_0(C_0(G/K)\rtimes G)$ given by $[V]\mapsto [p\cdot (\Gamma(G\times_K V)\rtimes G)]$ is the composition of the following maps:
		\begin{align*}
		j: &R(K)\rightarrow KK^G(C_0(G/K), C_0(G/K)) \\
		&[V]\mapsto [(\Gamma(G\times_K V), 0)];\\
		j^G: &KK^G(C_0(G/K), C_0(G/K))\rightarrow KK(C_0(G/K)\rtimes G, C_0(G/K)\rtimes G) \\
		&[(\Gamma(G\times_K V), 0)]\mapsto  [(\Gamma(G\times_K V)\rtimes G, 0)];\\
		[p]\otimes_{C_0(G/K)\rtimes G}: &KK(C_0(G/K)\rtimes G, C_0(G/K)\rtimes G)\rightarrow K_0(C_0(G/K)\rtimes G)\\
		&[(\Gamma(G\times_K V)\rtimes G, 0)]\mapsto [p\cdot(\Gamma(G\times_K V)\rtimes G)].
		\end{align*}
		This description helps us prove the equivalence of definitions (\ref{eq:indG/K1}) and (\ref{eq:indG/K2}).
	\end{remark}
	
	\begin{lemma}
		Let $[V]\in R(K)$. Then definitions (\ref{eq:indG/K1}) and (\ref{eq:indG/K2}) coincide:
		\[
		\ind_G([D_{G/K}^{V}])=[p]\otimes_{C_0(G/K)\rtimes G}j_r^G([D_{G/K}^V])=[V]\otimes_{C_0(G/K)\rtimes G} j_r^G([\partial_{G/K}]).
		\]
	\end{lemma}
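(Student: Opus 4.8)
The first equality in the statement is simply the definition~\eqref{eq:higher.ind} of $\ind_G$ specialised to $X = G/K$, so the content of the lemma is the second equality. My plan is to factor the class $[D_{G/K}^{V}]$ through a Kasparov product, push it through the descent homomorphism, regroup using associativity of the Kasparov product, and then invoke Lemma~\ref{lem:Morita.equivalence}.

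First I would record the standard fact that coupling the $\Spinc$-Dirac operator with the induced bundle $G\times_K V$ corresponds, at the level of $KK$-theory, to the Kasparov product
\[
[D_{G/K}^{V}] = \bigl[(\Gamma(G\times_K V), 0)\bigr]\otimes_{C_0(G/K)}[\partial_{G/K}] \in KK_d^G(C_0(G/K), \C),
\]
where $[(\Gamma(G\times_K V), 0)] = j([V])$ is the class appearing in Remark~\ref{rmk:Morita.equivalent}. This is the equivariant version of the usual description of a twisted Dirac operator as the Kasparov product of a multiplication cycle with the untwisted Dirac cycle (cf.~\cite{Kasparov}); the analytic input is that $\partial_{G/K}\otimes 1_{G\times_K V}$, with grading and Clifford action induced from $\partial_{G/K}$, satisfies Kasparov's connection and positivity conditions, which in the proper cocompact situation follow from the corresponding local statements on the slice together with a parametrix argument.

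Next I would apply the descent homomorphism $j_r^G$, which is functorial and multiplicative for the Kasparov product~\cite{Kasparov} (and for which the reduced and full crossed products agree here because the $G$-action on $G/K$ is proper, cf.~Remark~\ref{rmk:prop.crossed.prod}). Using that $j_r^G(j([V])) = [(\Gamma(G\times_K V)\rtimes G, 0)]$, as recorded in Remark~\ref{rmk:Morita.equivalent}, this gives
\[
j_r^G\bigl([D_{G/K}^{V}]\bigr) = \bigl[(\Gamma(G\times_K V)\rtimes G, 0)\bigr]\otimes_{C_0(G/K)\rtimes G} j_r^G\bigl([\partial_{G/K}]\bigr).
\]
Taking the Kasparov product with $[p]$ on the left and using associativity,
\[
[p]\otimes_{C_0(G/K)\rtimes G} j_r^G\bigl([D_{G/K}^{V}]\bigr) = \Bigl([p]\otimes_{C_0(G/K)\rtimes G}\bigl[(\Gamma(G\times_K V)\rtimes G, 0)\bigr]\Bigr)\otimes_{C_0(G/K)\rtimes G} j_r^G\bigl([\partial_{G/K}]\bigr).
\]
The inner factor equals $[p\cdot(\Gamma(G\times_K V)\rtimes G)]$ in $K_0(C_0(G/K)\rtimes G)$, again by the composition displayed in Remark~\ref{rmk:Morita.equivalent}, and by Lemma~\ref{lem:Morita.equivalence} this is exactly the image of $[V]$ under the Morita-equivalence isomorphism $K_0(C^*_r(K))\simeq K_0(C_0(G/K)\rtimes G)$, namely the class denoted $[V]$ in~\eqref{eq:indG/K2}. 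Substituting yields
\[
[p]\otimes_{C_0(G/K)\rtimes G} j_r^G\bigl([D_{G/K}^{V}]\bigr) = [V]\otimes_{C_0(G/K)\rtimes G} j_r^G\bigl([\partial_{G/K}]\bigr),
\]
which is the desired equality.

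The main obstacle is the first step: establishing the factorisation $[D_{G/K}^{V}] = j([V])\otimes_{C_0(G/K)}[\partial_{G/K}]$ rigorously in the equivariant, non-compact setting, since one must verify the defining conditions of the Kasparov product for the specific cycle given by the twisted Dirac operator. Everything after that is formal manipulation with descent, associativity, and the already-proved Lemma~\ref{lem:Morita.equivalence}.
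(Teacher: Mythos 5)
Your proof is correct and takes essentially the same route as the paper's: the paper observes directly by comparing $KK$-cycles that $j_r^G([D_{G/K}^V])=j^G\circ j([V])\otimes_{C_0(G/K)\rtimes G} j_r^G([\partial_{G/K}])$ and then concludes via Remark~\ref{rmk:Morita.equivalent}, while you obtain the same identity by first factoring $[D_{G/K}^V]=j([V])\otimes_{C_0(G/K)}[\partial_{G/K}]$ in $KK^G$ and then pushing it through the (multiplicative) descent homomorphism. The remaining steps --- multiplying on the left by $[p]$, regrouping by associativity, and invoking Lemma~\ref{lem:Morita.equivalence} to identify the inner factor with the image of $[V]$ --- are exactly the paper's argument.
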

	
	\begin{proof}
		Observe by comparing $KK$-cycles that we have
		\[
		j_r^G([D_{G/K}^V])=j^G\circ j([V])\otimes_{C_0(G/K)\rtimes G} j_r^G([\partial_{G/K}]).
		\]
		Thus, \[
		[p]\otimes_{C_0(G/K)\rtimes G} j_r^G([D_{G/K}^V])=[p]\otimes_{C_0(G/K)}j^G\circ j([V])\otimes_{C_0(G/K)\rtimes G} j_r^G([\partial_{G/K}]).
		\]
		The Lemma follows from Remark~\ref{rmk:Morita.equivalent}.
	\end{proof}
	
	\subsection{Higher Index Commutes with Induction}
	
	The main theorem of this section is the following:
	
	\begin{theorem}[Higher index commutes with induction]
		\label{thm:index.comm.induction}
		The following diagram commutes:
		\begin{equation}
		\label{eq:comm.index}
		\begin{CD}
		K^{K}_{\bullet}(Y) @>\ind_K>> K_{\bullet}(C^*_r(K))\\
		@V\KInd_K^G VV @V\DInd_K^GVV\\
		K^{G}_{\bullet+d}(X) @>\ind_G>> K_{\bullet+d}(C^*_r(G)).
		\end{CD}
		\end{equation}
	\end{theorem}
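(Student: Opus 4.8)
The plan is to reduce the assertion to a statement about twisted $\Spinc$-Dirac operators on manifolds of the form $G\times_K N$, and then to prove that statement using the Kasparov-product decomposition of such operators along the fibration over $G/K$ set up in Section~\ref{sec:equivalence}. First I would reduce to Dirac cycles: by the equivalence of geometric and analytic $K$-homology (Theorem~\ref{thm:BO2SW} on the top row, Theorem~\ref{thm:eq.K-homology} on the bottom), every $x\in K^K_\bullet(Y)$ has the form $x=f_*([D_{N,E}])$ for a geometric cycle $(N,E,f)$ over $Y$ with $N$ a compact $K$-$\Spinc$-manifold, and then $\KInd_K^G(x)=\tilde f_*([D_{M,\tilde E}])$ with $M=G\times_K N$, $\tilde E=G\times_K E$, and $\tilde f\colon M\to X$ the induced map. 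The maps $f$ and $\tilde f$ are proper $G$-maps between cocompact manifolds — for $\tilde f$ this uses that $\pi\colon M\to G/K$ has \emph{compact} fibre $N$ — so pulling a cut-off function back along them produces again a cut-off; hence $(\tilde f'\rtimes G)_*[p_X]=[p_M]$ (and likewise for $f$), and associativity of the Kasparov product with the descent \eqref{eq:descent} gives $\ind_K(x)=\ind_K([D_{N,E}])$ and $\ind_G(\KInd_K^G(x))=\ind_G([D_{M,\tilde E}])$. Thus it suffices to prove $\ind_G([D_{M,\tilde E}])=\DInd_K^G(\ind_K([D_{N,E}]))$.

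Next I would record the multiplicativity of the index along the fibration. By the same argument as in Lemma~\ref{lem:comm.diagram} (adiabatic limit of a product-type metric, together with metric-independence of the $K$-homology class of a Dirac operator), now carried out over $C_0(M)$ rather than pushed forward to $C_0(X)$, one gets a Kasparov-product factorisation $[D_{M,\tilde E}]=\widetilde{[D_{N,E}]}\otimes_{C_0(G/K)}[\partial_{G/K}]$ in $KK^G_{\bullet+d}(C_0(M),\C)$, where $\widetilde{[D_{N,E}]}\in KK^G(C_0(M),C_0(G/K))$ is the fibrewise family of the operators $D_{N,E}$ over $\pi\colon M\to G/K$. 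Applying \eqref{eq:higher.ind}, then multiplicativity of the reduced descent $j^G_r$ for Kasparov products, and then associativity, we obtain
\[
\ind_G([D_{M,\tilde E}])=\Big([p_M]\otimes_{C_0(M)\rtimes G}j^G_r\big(\widetilde{[D_{N,E}]}\big)\Big)\otimes_{C_0(G/K)\rtimes G}j^G_r([\partial_{G/K}]).
\]

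The main point, and the hard part, is then the "partial index" formula
\[
[p_M]\otimes_{C_0(M)\rtimes G}j^G_r\big(\widetilde{[D_{N,E}]}\big)=\ind_K([D_{N,E}])\in K_0(C^*_r(K))\simeq K_0(C_0(G/K)\rtimes G),
\]
where the isomorphism is the Morita equivalence of Lemma~\ref{lem:Morita.equivalence} (so the right-hand side is $[p\cdot(\Gamma(G\times_K V)\rtimes G)]$ for $[V]=\ind_K([D_{N,E}])$). The idea is to take as cut-off on $M$ the pullback $\pi^*c_{G/K}$ of a cut-off on $G/K$: this is compactly supported because $N$ is compact, is a genuine cut-off since $\int_G\pi^*c_{G/K}(g^{-1}m)\,dg=\int_G c_{G/K}(g^{-1}\pi(m))\,dg=1$, and does not change $[p_M]$. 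With this choice, pairing $\widetilde{[D_{N,E}]}$ against $[p_M]$ carries out the $G$-averaging through the base and leaves exactly the fibrewise $K$-equivariant index of $D_{N,E}$, which for the associated bundle $M=G\times_K N$ is the constant family $G\times_K(\ind_K([D_{N,E}]))$; this is the asserted class under Lemma~\ref{lem:Morita.equivalence}. A clean way to make this rigorous is to use that both sides are $R(K)$-module homomorphisms in the variable $[D_{N,E}]$, reducing the identity to the case $N$ a point and $E$ the trivial line bundle — there $M=G/K$, $D_{M,\tilde E}=\partial_{G/K}$, $\widetilde{[D_{N,E}]}$ is the unit of $KK^G(C_0(G/K),C_0(G/K))$, and the identity becomes the already-recorded fact that $[p]\in K_0(C_0(G/K)\rtimes G)$ corresponds to the trivial representation of $K$.

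Substituting this into the displayed formula above gives
\[
\ind_G([D_{M,\tilde E}])=\ind_K([D_{N,E}])\otimes_{C_0(G/K)\rtimes G}j^G_r([\partial_{G/K}])=\DInd_K^G\big(\ind_K([D_{N,E}])\big),
\]
the last equality being the definition \eqref{eq:indG/K2} of Dirac induction, whose equivalence with \eqref{eq:indG/K1} was established via Remark~\ref{rmk:Morita.equivalent}. Together with the first step this shows that \eqref{eq:comm.index} commutes. The main obstacle, as indicated, is the partial-index step: making precise that pairing the fibrewise family $\widetilde{[D_{N,E}]}$ over $G/K$ against a base-compatible cut-off recovers the $K$-index under the Morita equivalence — in effect a $KK$-theoretic family-index computation for $G\times_K N\to G/K$ — where one must check that the cut-off projection and the fibrewise operator interact correctly, which is precisely why the $R(K)$-linearity reduction to a point is convenient.
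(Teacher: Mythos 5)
Your strategy is genuinely different from the paper's, but the key step is incomplete. The paper works purely at the level of $KK$-theory: starting from the identification $K^K_0(Y)\simeq KK^G(C_0(X),C_0(G/K))$ of Proposition~\ref{prop:ana.ind.iso}, it factors each index map through pushforward to the point / classifying space, using $\ind_K=(\text{index on }K^K_0(pt))\circ\lambda_*$ and the analogous factorisation $\ind_G=\ind_{G/K}\circ p_*$ from~\eqref{eq:ind.X.G/K}. The diagram then decomposes into a soft square~\eqref{eq:3} (compatibility of the two pushforwards), a point-case square~\eqref{eq:2} (exactly Lemma~\ref{lem:Morita.equivalence}), and the Kasparov-product/descent square~\eqref{eq:comm.index.G}. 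No reduction to geometric Dirac cycles is needed. Your approach instead reduces first to twisted $\Spinc$-Dirac cycles and then to a ``family index'' over $G/K$; the reduction to Dirac cycles is harmless but extraneous, and your final step (associativity after multiplicativity of $j_r^G$) is the same as the paper's~\eqref{eq:comm.index.G}.

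The real issue is the ``partial index'' identity
\[
[p_M]\otimes_{C_0(M)\rtimes G}j^G_r\big(\widetilde{[D_{N,E}]}\big)=\ind_K([D_{N,E}])\in K_0(C_0(G/K)\rtimes G),
\]
which, once one unwinds the identification~\eqref{eq:1}, is precisely the content of the paper's square~\eqref{eq:4}. You propose to establish it by $R(K)$-linearity in $[D_{N,E}]$ and reduction to $N=pt$. That does not work: both sides \emph{are} $R(K)$-linear maps $K_0^K(N)\to K_0(C^*_r(K))$, but two $R(K)$-linear maps that agree on the image of $K_0^K(pt)$ need not agree on $K_0^K(N)$, since $K_0^K(N)$ is not generated over $R(K)$ by classes pulled back from the point. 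What actually effects the reduction to the point is the observation — which you do not invoke — that the index map on $K_0^K(N)$ factors through $\lambda_*\colon K_0^K(N)\to K_0^K(pt)$ and, correspondingly, the $G$-side index factors through pushforward to $G/K$; this is exactly the step the paper inserts via~\eqref{eq:3} and~\eqref{eq:ind.X.G/K}. Adding that factorisation would repair your argument, but it would also make it converge to the paper's route, so there is not really an independent proof of the critical identity in what you have written. Your observations about cut-offs pulling back along proper equivariant maps (hence preservation of $[p]$ under pushforward) and about the Kasparov factorisation $[D_{M,\tilde E}]=\widetilde{[D_{N,E}]}\otimes_{C_0(G/K)}[\partial_{G/K}]$ are correct and consistent with the adiabatic-limit computation in Lemma~\ref{lem:comm.diagram}.
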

	
	\begin{proof}
		We only need to give the proof for ${\bullet}=0$.
		Consider the $G$-equivariant maps $\lambda: Y\rightarrow pt$, $\tilde\lambda: X=G\times_K Y\rightarrow G/K$ and the contravariant maps on algebras
		\[
		\lambda': \C\rightarrow C_0(Y), \qquad \tilde\lambda': C_0(G/K)\rightarrow C_0(X).
		\] 
		Let $(\cH, f, F)$ be a Kasparov cycle for $K_0^K(Y)$.
		Then the commutativity of 
		\[
		\begin{CD}
		f: C_0(Y)\rightarrow\cL(\cH) @>>> f\circ \lambda'\\
		@VVV @VVV\\
		\tilde f: C_0(X)\rightarrow\cL(G\times_K\cH) @>>> \tilde f\circ\tilde \lambda'=\widetilde{f\circ \lambda'}
		\end{CD}
		\]
		implies the commutativity of 
		\begin{equation}
		\label{eq:3}
		\begin{CD}
		K_0^K(Y) @>\lambda_*>> K^K_0(pt)\\
		@V\simeq VV @V\simeq VV\\
		KK^G(C_0(X), C_0(G/K)) @>\tilde \lambda_*>> KK^G(C_0(G/K), C_0(G/K)).
		\end{CD}
		\end{equation}
		Every element in $K_0^K(pt)\simeq KK^K(\C, \C)$ can be represented by a $K$-vector space $[V]$, or a finite-dimensional representation of $K.$ Regarding $V$ as a $C_r^*(K)$-module, the higher index is the identity map 
		\[
		K_0^K(pt)\rightarrow K_0(C_r^*(K)),\qquad  \ind_K([V])=[V].
		\]
		So in the diagram 
		\begin{equation}
		\label{eq:2}
		\begin{CD}
		KK^K(\C, \C) @>\ind_K>> K_0(C^*_r(K))\\
		@V\simeq VV @V\simeq VV\\
		KK^G(C_0(G/K), C_0(G/K)) @>\ind_G>>K_0(C_0(G/K)\rtimes G),
		\end{CD}
		\end{equation}
		the elements are mapped as
		\[
		\begin{CD}
		[(V, 0)] @>\ind_K>> [(V, 0)]\\
		@VVV  @VVV \\
		[(\Gamma(G\times_K V), 0)] @>\ind_G>>[(p\cdot(\Gamma(G\times_K V)\rtimes G), 0)].
		\end{CD}
		\]
		Thus, diagram~(\ref{eq:2}) commutes if $V$ and $p\cdot(\Gamma(G\times_K V)\rtimes G)$ are Morita-equivalent as modules; but this has been proved in Lemma~\ref{lem:Morita.equivalence}.
		
		Noting that the higher index map factors through the higher index map for classifying spaces (cf.~(\ref{eq:ind.X.G/K})), commutativity of (\ref{eq:3})-(\ref{eq:2}) implies that the following diagram commutes (cf. (\ref{eq:1}) for the left isomorphism):
		\begin{equation}
		\label{eq:4}
		\begin{CD}
		K_0^K(Y) @>\ind_K>> K_0(C^*_r(K))\\
		@V\simeq VV @V\simeq VV\\
		KK^G(C_0(X),C_0(G/K)) @>\ind_G>>K_0(C_0(G/K)\rtimes G).
		\end{CD}
		\end{equation}
		
		Finally, note that for $x\in KK^{G}(C_0(X), C_0(G/K))$, we have
		\begin{align*}
		\ind_G(x)\otimes_{C_0(G/K)\rtimes G} j_r^G([\partial_{G/K}])=&\left([p]\otimes_{C_0(X)\rtimes G} j^G(x)\right)\otimes_{C_0(G/K)\rtimes G} j_r^G([\partial_{G/K}]) \\
		=&[p]\otimes_{C_0(X)\rtimes G} \left(j^G(x)\otimes_{C_0(G/K)\rtimes G} j_r^G([\partial_{G/K}])\right)\\
		=&[p]\otimes_{C_0(X)\rtimes G} j_r^G(x\otimes_{C_0(G/K)} [\partial_{G/K}])\\
		=&\ind_G(x\otimes_{C_0(G/K)} [\partial_{G/K}]).
		\end{align*}
		In other words, the following diagram commutes:
		\begin{equation}
		\label{eq:comm.index.G}
		\begin{CD}
		KK^{G}(C_0(X), C_0(G/K)) @>\ind_G>> K_0(C_0(G/K)\rtimes G))\\
		@VV\otimes_{C_0(G/K)}\,[\partial_{G/K}]V @VV\otimes_{C_0(G/K)\rtimes G} \,j_r^G([\partial_{G/K}])V\\
		K^{G}_d(X) @>\ind_G>> K_d(C^*_r(G)).
		\end{CD}
		\end{equation}
		Putting together (\ref{eq:4}) and (\ref{eq:comm.index.G}) completes the proof.
	\end{proof}

	\begin{remark}
		The commutative diagram in Theorem~\ref{thm:index.comm.induction} is closely related to the \emph{quantisation commutes with induction} results of \cite{HochsDS, HochsPS}.
		We acknowledge that Theorem~\ref{thm:index.comm.induction} was proved in \cite{HochsDS} by more involved diagram chasing. Here we have given a more direct proof by making use of the properties of $KK$-theory.
	\end{remark}
	
	We end this subsection by stating an implication of Remark~\ref{sec:Consequence} and Theorem~\ref{thm:index.comm.induction}. 
	
	\begin{corollary}
		\label{prop quant ind}
		For every $x\in K_{\bullet}^G(X)$, there exists a geometric cycle $(M, E, f)$ representing an element in $K_{\bullet}^{G, geo}(X)$ such that $x=f_*([D_{M, E}])$, and upon choosing a $K$-slice $N$ of $M$ with a compatible $K$-equivariant $\Spinc$-structure, we have 
		\[
		\ind_G x=\ind_G f_*([D_{M, E}])=\DInd_K^G(\ind_K (f|_N)_*([D_{N, E|_N}]))) \in K_{\bullet}(C^*_{r}(G)).
		\]
	\end{corollary}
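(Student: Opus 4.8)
The plan is to combine the two main results established earlier in the paper: the structural consequence of the equivalence of geometric and analytic $K$-homology recorded in Remark~\ref{sec:Consequence}, and the commutativity of diagram~\eqref{eq:comm.index} from Theorem~\ref{thm:index.comm.induction}. It suffices to treat the case $\bullet = 0$, the odd case being identical.

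First I would invoke Remark~\ref{sec:Consequence}. Since the Baum--Douglas map gives an isomorphism $K_\bullet^{\textnormal{geo},G}(X) \simeq K_\bullet^G(X)$ by Theorem~\ref{thm:eq.K-homology}, every $x \in K_\bullet^G(X)$ is represented by a geometric cycle $(M, E, f)$ with $f \colon M \to X$, so that $x = f_*([E] \cap [D_M]) = f_*([D_{M, E}])$. Writing $M \cong G \times_K N$ by Abels' theorem, the slice $N$ carries a $K$-equivariant $\Spinc$-structure compatible with that of $M$ in the sense of Remark~\ref{rem:Spinc.stru.ind}, and $(N, E|_N, f|_N)$ is a geometric cycle representing a class in $K_d^K(Y)$, where $Y = f(N)$ is a $K$-slice of $X$. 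The content of the geometric induction map (Definition~\ref{def:geometricinduction}), together with the identification of $i$, $j$ and $\KInd_K^G$ established after Lemma~\ref{lem:comm.diagram}, is precisely that
\[
\KInd_K^G\bigl((f|_N)_*([D_{N, E|_N}])\bigr) = f_*([D_{M, E}]) = x.
\]

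Second, I would apply Theorem~\ref{thm:index.comm.induction} to the class $y := (f|_N)_*([D_{N, E|_N}]) \in K_\bullet^K(Y)$. Chasing $y$ around diagram~\eqref{eq:comm.index} down-then-right gives $\ind_G\bigl(\KInd_K^G(y)\bigr) = \ind_G(x)$, whereas chasing right-then-down gives $\DInd_K^G\bigl(\ind_K(y)\bigr) = \DInd_K^G\bigl(\ind_K (f|_N)_*([D_{N, E|_N}])\bigr)$. Commutativity of the diagram identifies the two, which is exactly the asserted equality in $K_\bullet(C^*_r(G))$.

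There is no substantial obstacle here, as all the hard work has been carried out in Section~\ref{sec:equivalence} and in the proof of Theorem~\ref{thm:index.comm.induction}; the only points requiring care are the verification that the induction map of Remark~\ref{sec:Consequence} coincides with the left vertical arrow $\KInd_K^G$ of~\eqref{eq:comm.index} (done via Lemma~\ref{lem:comm.diagram}), and the bookkeeping of $\Spinc$-structures on $M$ versus $N$ under stabilisation and destabilisation, which was settled in Remark~\ref{rem:Spinc.stru.ind} and Proposition~\ref{lem:iso.geo.ind}.
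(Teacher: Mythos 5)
Your proposal is correct and follows the same route as the paper, which introduces the corollary explicitly as an implication of Remark~\ref{sec:Consequence} together with Theorem~\ref{thm:index.comm.induction}. Your diagram chase down-then-right versus right-then-down is precisely what the paper intends, and your attention to the identification of $i$, $j$ and $\KInd_K^G$ is the right point of care.
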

	
	\vspace{0.3cm}

	\section{A Trace Theorem for Equivariant Index}\label{sec:reduction}
	
	We now focus on the case when $G$ is a connected, semisimple Lie group with finite centre and $K$ a maximal compact subgroup, such that $G/K$ is Spin. We further assume that $d = \dim G/K$ is even and rank $G$ = rank $K$, in order to allow $G$ to have discrete series representations. Let $M$ be a $G$-equivariant $\Spinc$-manifold with Dirac operator $D_M$ and $N$ a $K$-slice of $G$ with Dirac operator $D_N$ associated with a compatible (see Remark \ref{rem:Spinc.stru.ind}) $K$-equivariant $\Spinc$-structure on $N$. We shall use the commutativity result in Section 4 together with known representation-theoretic properties of the group $K_0(C_r^*(G))$ for such $G$ to deduce a formula for the $L^2$-index of $D_M$ in terms of the $L^2$-index of $D_N$. Since twisted Spin$^c$-Dirac operators exhaust $K_0^G(M)$, this allows us to relate the $L^2$-index of any operator representing a class in $K_0^G(M)$ to the $L^2$-index of $K$-equivariant Dirac operators in $K_0^K(N)$.
	
	\begin{remark}
		For simplicity, we have assumed that $M$ has a $G$-equivariant Spin$^c$-structure. Of course, since the commutativity result of Section 4 did not require this, all of the statements in this section apply in an appropriate form to operators on any proper $G$-cocompact manifold $X$, not necessarily having a Spin$^c$-structure, and a compact $K$-slice $Y$. 
	\end{remark}
	
	\noindent Let $\mathcal{R}_G\subseteq\mathcal{B}(L^2(G))$ be the commutant of the right-regular representation $R$ of $G$. Then $\mathcal{R}_G$ is a von Neumann algebra with a faithful, normal, semi-finite trace $\tau$ determined by
	$$\tau(R(f)^*R(f)) = \int_G |f(g)|^2\,dg,$$
	
	\noindent for all $f\in L^2(G)$ such that $R(f)$ is a bounded operator on $L^2(G)$. Let $E$ be a $\mathbb{Z}_2$-graded $G$-equivariant vector bundle over $M$, and let $E\cong G\times_K (E|_N)$ be a fixed choice of decomposition. Denote by $\mathcal{R}_G(E)$ the $K$-invariant elements of 
$$\mathcal{R}_G\otimes\mathcal{B}(L^2(N,E|_N)).$$
Then $\mathcal{R}_G(E)$ is equipped with a natural trace, which we will also denote by $\tau$, given by combining $\tau$ on $\mathcal{R}_G$ with the trace on bounded operators. 
	
	Let $D=D^+\oplus D^-$ be an odd-graded first-order $G$-invariant elliptic operator acting on $E$ and defining a class in $K_0^G(M)$. By definition, the {\it $L^2$-index} of $D$ is (see Connes-Moscovici~\cite{CM82} or Wang \cite{Wang})
	$$\L2ind(D) := \tau(\textnormal{pr}_{\textnormal{ker}D^+}) - \tau(\textnormal{pr}_{\textnormal{ker}D^-}),$$
	
	\noindent where pr$_L$ denotes the projection of $L^2(M,E)$ onto a $G$-invariant subspace $L$. It was shown in \cite{Wang} that the $\L2ind$ of $D$ can be calculated via the composition of maps
	$$K_0^G(M)\xrightarrow{\ind_G}K_0(C_r^*(G))\xrightarrow{\tau_G}\mathbb{R},$$ 
	
	\noindent where $\tau_G$ is the von Neumann trace on $C_r^*(G)$, defined first on the dense subspace $C_c(G)$ by evaluation at the identity $e\in G$:
	$$\tau_G: C_c(G)\rightarrow\mathbb{C},\qquad f\mapsto f(e),$$
	
	\noindent and extended to $C_r^*(G)$. The same paper provides an integral formula for the $L^2$-index:
\begin{equation}\label{eq:L2indexformula}
	\L2ind(D) = \int_{TM} (c\circ\pi)(\hat{A}(M))^2\cup\textnormal{ch}(\sigma_D),
\end{equation}
	\noindent where $\pi:TM\rightarrow M$ is the natural projection and $c$ is a compactly supported cut-off function on $M$, with the property that $\int_G c(g^{-1}x)\, dg = 1$ for any $x\in M$. Since we have assumed that the manifold $M$ is Spin$^c$, this integral is equal to
	$$\int_M c(x) e^{\frac{1}{2}c_1(L_M)}\hat{A}(M),$$
	
	\noindent where $L_M$ is the line bundle defining the $G$-equivariant Spin$^c$-structure on $M$. 
	
	On the other hand, it is known that, for our chosen class of Lie groups $G$, the discrete series representations of $G$ can be identified with elements of $K_0(C_r^*(G))$ (although not necessarily all of them) in the following way. Let $(H,\pi)$ be a discrete series representation of $G$. Let $d_H$ be the {\em formal degree} of $(H,\pi)$ - equal to its mass in the Plancherel measure - and $c_v$ be a {\it matrix coefficient} of the representation, given by
	$$c_v:G\rightarrow\mathbb{C},\qquad c_v(g):=\langle v,\pi(g)v\rangle,$$ 
	for some $v\in H, ||v|| = 1$ (it is independent of the choice of $v$). It can be verified that $d_H c_v$ is a projection in the $C^*$-algebra $C_r^*(G)$ and hence defines an element in its $K$-theory. It is also known that the von Neumann trace $\tau_G$ (defined above) of the class $[d_H c_v]\in K_0(C_r^*(G))$ is equal to $(-1)^{d/2}$ times the formal degree (see \cite{LafforgueICM}). 

	For a more extensive discussion we refer to \cite{LafforgueICM}. It is shown there that the map taking $(H,\pi)$ to $[d_H c_v]\in K_0(C_r^*(G))$ is an injection from the set of discrete series representations to a set of generating elements for $K_0(C_r^*(G))$ \cite{LafforgueICM}. 

	It is interesting to note that for connected semisimple Lie groups having discrete series, including $G=SL(2, \R)$, the elements of $K_0(C_r^*(G))$ can be given {\it entirely}, in the aforesaid way, as representations from the discrete series and limits of discrete series of $G$. See~\cite{Wassermann}. This motivates us to understand $K_0(C_r^*(G))$, as well as $G$-equivariant index theory, from the point of view of representation theory of $G$. By comparison, the theory of representations of compact groups already plays an important role in index theory. The goal of this section is to observe an explicit representation-theoretic relationship between a $G$-equivariant index and its corresponding $K$-equivariant index, in the sense of the previous section.
	
	The elements of $K_0(C_r^*(G))$, considered as representations of $G$, are in bijection with the irreducible representations of $K$ via the Dirac induction map $\DInd$ (see \cite{BCH}). On the other hand, there exists a formula for the formal degree of a discrete series representation of $G$ in terms of the highest weight of a corresponding $K$-representation together with information about the root systems of $G$ and $K$ (see \cite{AtiyahSchmid} or \cite{Lafforgue}). 
	
	To state this formula precisely, let us first fix the following notation. Let $T$ be a common maximal torus of $K$ and $G$, and choose a Weyl chamber $C$ for the root sytem $\Phi$ of $\mathfrak{g}$, determining a set $\Phi^+$ of positive roots. Pick the Weyl chamber for $\mathfrak{k}$ that contains $C$, and let $\Phi^+_c$ be the system of compact roots. Let $\Phi^+_n:=\Phi^+\backslash\Phi^+_c$, $\rho_c$ and $\rho_n$ be the half-sums of the positive-compact and non-compact roots respectively, and let $\rho:=\rho_c+\rho_n$.
	
	We can relate the formula for the formal degree proved in \cite{LafforgueICM} to the commutativity result Section 4, as follows. This gives an interpretation of a result of Atiyah and Schmid (p. 25 of \cite{AtiyahSchmid}) in terms of equivariant index theory.
	
	\begin{proposition}\label{prop:trace1}
		Let $G$ and $K$ be as above, with $G/K$ Spin. Let $\mu$ be the highest weight of an irreducible representation $V_\mu$ of $K$. Suppose $\mu+\rho_c$ is the Harish-Chandra parameter of a discrete series representation of $(H,\pi)$ of $G$. Then the formal degree of $(H,\pi)$ is given by
		\begin{align*}
		d_H &= (-1)^{d/2}\,\tau_G([\DInd([V_\mu])])=\prod_{\alpha\in\Phi^+}\frac{(\mu+\rho_c,\alpha)}{(\rho,\alpha)},
		\end{align*}
		with both sides vanishing when the class $\DInd[V_{\mu}]\in K_0(C^*_r(G))$ is not given by a discrete series representation. We have a well-defined commutative diagram:
		\begin{equation*}\label{tracediagram}
		\begin{tikzcd}
		K_0^G(M)\arrow{r}{\ind_G} & K_0(C_r^*(G))\arrow{rd}{\tau_G} & \ \\ 
		\ & \ & \mathbb{R},\\
		K_0^K(N)\arrow{uu}{\KInd}\arrow{r}{\ind_K} & R(K)\arrow{uu}{\DInd}\arrow{ru}{\Pi_K} & \
		\end{tikzcd}
		\end{equation*}
		where $\Pi_K([V_\mu]):=(-1)^{d/2}\,\prod_{\alpha\in\Phi^+}\frac{(\mu+\rho_c,\alpha)}{(\rho,\alpha)}.$ Here the Haar measure on $G$ is normalised by
		$$\textnormal{vol }K = \textnormal{vol }M_1/K_1 = 1,$$
		where $M_1$ is a maximal compact subgroup of the universal complexification $G^\mathbb{C}$ of $G$ and $K_1<G_1$ a maximal compact subgroup of a real form $G_1$ of $G^\mathbb{C}$ (see \cite{AtiyahSchmid} for more details).
	\end{proposition}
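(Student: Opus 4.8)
The plan is to observe that, granting Theorem~\ref{thm:index.comm.induction}, the proposition decomposes into a piece of classical discrete-series theory (the closed formula for $d_H$, which is the right-hand triangle of the diagram) and the equivariant $KK$-theory already established (the left-hand square), with the outer pentagon following formally.

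First I would treat the displayed formula. Under the proposition's hypothesis we may take $\mu+\rho_c$ to be $\Phi^+$-dominant and regular, so it is genuinely the Harish-Chandra parameter of a discrete series $(H,\pi)$. The rightmost equality $d_H = \prod_{\alpha\in\Phi^+}\frac{(\mu+\rho_c,\alpha)}{(\rho,\alpha)}$ is then Harish-Chandra's formal-degree formula in the Haar normalisation specified in the statement, for which I would cite \cite{AtiyahSchmid} (p.~25) or \cite{Lafforgue}. For the middle equality $d_H = (-1)^{d/2}\,\tau_G([\DInd([V_\mu])])$ I would invoke the Atiyah--Schmid realisation of the discrete series: the $L^2$-index of the $\Spinc$-Dirac operator on $G/K$ coupled to $V_\mu$ is, up to the sign $(-1)^{d/2}$, the class $[d_H c_v]\in K_0(C^*_r(G))$ of the matrix-coefficient projection, and under $R(K)\simeq K_0^G(G/K)$ this $L^2$-index is, by definition \eqref{eq:indG/K1}, exactly $\DInd([V_\mu])$; Lafforgue's identity $\tau_G([d_H c_v]) = (-1)^{d/2} d_H$ \cite{LafforgueICM} then closes the loop. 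For the ``both sides vanishing'' clause, if $\mu+\rho_c$ is singular the twisted Dirac operator on $G/K$ has vanishing $L^2$-index by Atiyah--Schmid, so $\DInd([V_\mu])=0$ and $\tau_G(\DInd([V_\mu]))=0$, while the product has a factor $(\mu+\rho_c,\alpha)=0$.

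Next I would assemble the diagram. For a general irreducible $V_\mu$ with $\mu+\rho_c$ regular but possibly only $\Phi^+_c$-dominant, choose $w$ in the Weyl group with $w(\mu+\rho_c)$ $\Phi^+$-dominant; the Atiyah--Schmid computation then gives $\DInd([V_\mu]) = \operatorname{sign}(w)\,[d_{H_w} c_v]$ for the corresponding discrete series $H_w$, and the identity $\prod_{\alpha\in\Phi^+}(\mu+\rho_c,\alpha) = \operatorname{sign}(w)\prod_{\alpha\in\Phi^+}(w(\mu+\rho_c),\alpha)$ shows $\tau_G(\DInd([V_\mu])) = (-1)^{d/2}\prod_{\alpha\in\Phi^+}\frac{(\mu+\rho_c,\alpha)}{(\rho,\alpha)}$. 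Since this last quantity is by definition $\Pi_K([V_\mu])$, extending $\Z$-linearly over the basis $\{[V_\mu]\}$ of $R(K)$ gives the right-hand triangle $\tau_G\circ\DInd = \Pi_K$, the vanishing clause being exactly what makes this consistent. The left-hand square $\ind_G\circ\KInd = \DInd\circ\ind_K$ is Theorem~\ref{thm:index.comm.induction} with $\bullet=0$, using that $\ind_K$ is the ordinary equivariant index into $K_0(C^*_r(K))=R(K)$ and that \eqref{eq:indG/K2} agrees with $\ind_G$ on $K_0^G(G/K)$. The outer pentagon then commutes formally, $\tau_G\circ\ind_G\circ\KInd = \tau_G\circ\DInd\circ\ind_K = \Pi_K\circ\ind_K$; and via Wang's trace theorem \cite{Wang}, $\tau_G\circ\ind_G$ on $K_0^G(M)$ is the $L^2$-index, so the diagram also encodes $\L2ind(D) = \Pi_K(\ind_K(D_N))$ for $D$ representing a class in $K_0^G(M)$ and $D_N$ its reduction to the $K$-slice $N$.

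The step I expect to be the main obstacle is the sign bookkeeping in the middle: verifying that the Weyl-element sign produced by the Atiyah--Schmid index computation (forced because $\mu+\rho_c$ need only be $\Phi^+_c$-dominant) is precisely the sign $\operatorname{sign}(w)$ appearing when $\prod_{\alpha\in\Phi^+}(\mu+\rho_c,\alpha)$ is rewritten via the dominant representative, and that the $(-1)^{d/2}$ factors are consistently placed---together with checking that the normalisation $\operatorname{vol}(K)=\operatorname{vol}(M_1/K_1)=1$ is the one under which Lafforgue's and Harish-Chandra's formulas assume the displayed form. Everything else is a formal concatenation of Theorem~\ref{thm:index.comm.induction} with the cited facts from \cite{AtiyahSchmid, LafforgueICM, Lafforgue, Wang}.
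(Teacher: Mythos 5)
Your proposal is correct and follows essentially the same route the paper takes: the paper presents this proposition as an interpretation of the Atiyah--Schmid formal-degree formula and Lafforgue's identity $\tau_G([d_H c_v]) = (-1)^{d/2} d_H$ in light of Theorem~\ref{thm:index.comm.induction}, and offers no separate proof beyond assembling these ingredients, which is exactly your decomposition into the right-hand triangle (discrete-series theory) and the left-hand square (higher index commutes with induction). The Weyl-sign bookkeeping you flag as a potential obstacle is not elaborated in the paper either; the paper addresses the singular case only via the remark that the right-hand side has a vanishing factor when $\mu+\rho_c$ is singular, which you also correctly identify.
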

	
	\begin{remark}
		We have $\tau_G(\DInd[V_{\mu}])=0$ when $\DInd[V_{\mu}]\in K_0(C^*_r(G))$ is not given by a discrete series representation of $G$, since the Harish-Chandra parameter is then singular, hence the right-hand side of the formula above vanishes.
	\end{remark}
	
	We can state this result in a more suggestive way that makes clear the relationship between the von Neumann traces $\tau_G$ on $K_0(C_r^*(G))$ and $\tau_K$ on $R(K)\cong K_0(C_r^*(K))$.
	
	\begin{proposition}\label{prop:trace2}
		With the notation above, we have:
		
		$$\tau_G([\DInd([V_\mu])]) = (-1)^{d/2}\left(\frac{\prod_{\alpha\in\Phi_n^+}(\mu+\rho_c,\alpha)\prod_{\alpha\in\Phi_c^+}(\rho_c,\alpha)}{\prod_{\alpha\in\Phi^+}(\rho,\alpha)}\right) \tau_K([V_\mu]).$$
	\end{proposition}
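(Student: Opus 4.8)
The plan is to deduce Proposition~\ref{prop:trace2} from Proposition~\ref{prop:trace1} together with the Weyl dimension formula for $K$. Proposition~\ref{prop:trace1} already evaluates the left-hand side as
\[
\tau_G([\DInd([V_\mu])]) = (-1)^{d/2}\prod_{\alpha\in\Phi^+}\frac{(\mu+\rho_c,\alpha)}{(\rho,\alpha)},
\]
so it suffices to check that the right-hand side of Proposition~\ref{prop:trace2} equals this same expression; this is a purely algebraic rearrangement once $\tau_K([V_\mu])$ has been identified.

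First I would compute $\tau_K([V_\mu])$. Under the Peter--Weyl decomposition $C^*_r(K)=C^*(K)=\bigoplus_{\pi\in\hat K}\End(V_\pi)$ and the induced isomorphism $R(K)\cong K_0(C^*_r(K))$, the class $[V_\mu]$ corresponds to a minimal projection in the summand $\End(V_\mu)$. The von Neumann trace $\tau_K$, defined exactly as $\tau_G$ by $f\mapsto f(e)$ on $C_c(K)$ and extended to $C^*_r(K)$, sends the central projection $(\dim V_\mu)\chi_\mu$ to $(\dim V_\mu)\chi_\mu(e)=(\dim V_\mu)^2$; since that projection represents $(\dim V_\mu)$ times the minimal projection in $K_0$, we get $\tau_K([V_\mu])=\dim V_\mu$. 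By the Weyl dimension formula,
\[
\tau_K([V_\mu])=\dim V_\mu=\prod_{\alpha\in\Phi_c^+}\frac{(\mu+\rho_c,\alpha)}{(\rho_c,\alpha)}.
\]

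The rearrangement then proceeds as follows. From the displayed identity, $\bigl(\prod_{\alpha\in\Phi_c^+}(\rho_c,\alpha)\bigr)\,\tau_K([V_\mu])=\prod_{\alpha\in\Phi_c^+}(\mu+\rho_c,\alpha)$, and since $\Phi^+=\Phi_c^+\sqcup\Phi_n^+$ we obtain
\[
\Bigl(\prod_{\alpha\in\Phi_n^+}(\mu+\rho_c,\alpha)\Bigr)\Bigl(\prod_{\alpha\in\Phi_c^+}(\rho_c,\alpha)\Bigr)\tau_K([V_\mu])=\prod_{\alpha\in\Phi^+}(\mu+\rho_c,\alpha).
\]
Dividing by $\prod_{\alpha\in\Phi^+}(\rho,\alpha)$ and multiplying by $(-1)^{d/2}$, the right-hand side of Proposition~\ref{prop:trace2} becomes $(-1)^{d/2}\prod_{\alpha\in\Phi^+}\frac{(\mu+\rho_c,\alpha)}{(\rho,\alpha)}$, which is exactly the value of $\tau_G([\DInd([V_\mu])])$ from Proposition~\ref{prop:trace1}.

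I do not foresee any serious difficulty here; the only point requiring care is to make sure the normalisation behind $\tau_K([V_\mu])=\dim V_\mu$ is consistent with the Haar-measure conventions ($\mathrm{vol}\,K=1$, and the attendant normalisation of $G$) fixed in Proposition~\ref{prop:trace1}, so that both traces are taken with respect to compatible choices. Once this is verified, Proposition~\ref{prop:trace2} is simply the Atiyah--Schmid formal-degree formula rewritten as a statement about how Dirac induction intertwines the von Neumann traces $\tau_G$ and $\tau_K$.
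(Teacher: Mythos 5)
Your proof is correct and takes the same route as the paper: invoke Proposition 5.1, identify $\tau_K([V_\mu])=\dim V_\mu$, and use the Weyl dimension formula to regroup the product over $\Phi^+$ into the stated form. The only difference is that you flesh out the paper's "it can be verified that $\tau_K([V])=\dim V$" with the Peter--Weyl argument, which is a welcome clarification but not a different method.
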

	\begin{proof}
		By the Weyl dimension formula for $V_\mu$ and the Proposition above,

\begin{align*}
\tau_G([\DInd([V_\mu])]) &= (-1)^{d/2}\,d_H\\
&= (-1)^{d/2}\, \prod_{\alpha\in\Phi^+}\frac{(\mu+\rho_c,\alpha)}{(\rho,\alpha)}\\
&=(-1)^{d/2}\,\prod_{\alpha\in\Phi^+_c}\frac{(\mu+\rho_c,\alpha)}{(\rho_c,\alpha)}\left(\frac{\prod_{\alpha\in\Phi_n^+}(\mu+\rho_c,\alpha)\prod_{\alpha\in\Phi_c^+}(\rho_c,\alpha)}{\prod_{\alpha\in\Phi^+}(\rho,\alpha)}\right)\\
&=(-1)^{d/2}\,\dim V_\mu\left(\frac{\prod_{\alpha\in\Phi_n^+}(\mu+\rho_c,\alpha)\prod_{\alpha\in\Phi_c^+}(\rho_c,\alpha)}{\prod_{\alpha\in\Phi^+}(\rho,\alpha)}\right).\\
\end{align*}
		It can be verified that the trace $\tau_K$ applied to a general element $[V]\in R(K)\cong K_0(C_r^*(K))$ returns the dimension of the representation space $V$, which concludes the proof.
	\end{proof}
	\noindent Thus $\tau_G$ and $\tau_K$ are related by a scalar factor that, up to a sign, depends only upon the highest weight $\mu$ of the representation $[V_\mu]\in R(K)$ and the chosen root systems of $\mathfrak{g}$ and $\mathfrak{k}$. In particular, as one sees by direct computation in the example $G = SL(2,\mathbb{R})$ and $K=SO(2)$, this scalar factor varies significantly depending on the weight $\mu$ (see \cite{BCH} for more details on the calculation in this case), reflecting the fact that Dirac induction plays a significant role in relating the two $L^2$-indices for $G$ and $K$.
	
	Finally, as was mentioned in earlier, the von Neumann traces $\tau_G$ and $\tau_K$ give rise to $L^2$-indices when applied to the $G$ and $K$-equivariant indices of operators on $M$ and $N$ respectively. Using Wang's formula for the $L^2$-index (\ref{eq:L2indexformula}), the result above becomes an equality of integrals involving characteristic classes on the non-compact manifold $M$ and the compact manifold $N$, as follows.
	
	\begin{corollary}
		Let $D_M$ and $D_N$ be Spin$^c$-Dirac operators on $M$ and $N$ for compatible equivariant Spin$^c$-structures, which are defined by line bundles $L_M$ and $L_N$ respectively, where $\ind_G(D_M)\in K_0(C^*_r(G))$ corresponds to a discrete series representation with Harish-Chandra parameter $\mu+\rho_c.$ Then the $L^2$-indices of $D_M$ and $D_N$ are related by
		
		\begin{align*}
		\L2ind(D_M)&=\int_M c(x)e^{\frac{1}{2}c_1(L_M)}\hat{A}(M)\\
		&=(-1)^{d/2}\left(\frac{\prod_{\alpha\in\Phi_n^+}(\mu+\rho_c,\alpha)\prod_{\alpha\in\Phi_c^+}(\rho_c,\alpha)}{\prod_{\alpha\in\Phi^+}(\rho,\alpha)}\right) \int_N e^{\frac{1}{2}c_1(L_N)}\hat{A}(N)\\
		&=(-1)^{d/2}\,\left(\frac{\prod_{\alpha\in\Phi_n^+}(\mu+\rho_c,\alpha)\prod_{\alpha\in\Phi_c^+}(\rho_c,\alpha)}{\prod_{\alpha\in\Phi^+}(\rho,\alpha)}\right)\L2ind(D_N).\\
		\end{align*}
	\end{corollary}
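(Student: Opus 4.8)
The strategy is to concatenate Wang's $L^2$-index formula with the commutativity result of Section~4 and the representation-theoretic identity of Proposition~\ref{prop:trace2}; no new analytic ingredient is needed. The first equality, $\L2ind(D_M)=\int_M c(x)e^{\frac12 c_1(L_M)}\hat{A}(M)$, is simply the Spin$^c$ specialisation of Wang's formula~\eqref{eq:L2indexformula} recorded above, so the real content lies in identifying this integral over the non-compact manifold $M$ with the displayed multiple of $\int_N e^{\frac12 c_1(L_N)}\hat{A}(N)$.

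First I would invoke the other half of Wang's theorem, $\L2ind(D_M)=\tau_G(\ind_G(D_M))$, to move the computation into $K_0(C^*_r(G))$. Since $D_M$ is the Spin$^c$-Dirac operator for a $G$-equivariant Spin$^c$-structure and $D_N$ is the Spin$^c$-Dirac operator on the $K$-slice $N$ for the compatible $K$-equivariant Spin$^c$-structure in the sense of Remark~\ref{rem:Spinc.stru.ind}, the identification~\eqref{eq:K-homo.ind} gives $\KInd_K^G[D_N]=[D_M]$, so the commutativity of diagram~\eqref{eq:comm.index} (Theorem~\ref{thm:index.comm.induction}, equivalently Corollary~\ref{prop quant ind}) yields
\[
\ind_G(D_M)=\DInd_K^G\bigl(\ind_K(D_N)\bigr)\in K_0(C^*_r(G)).
\]
By hypothesis $\ind_G(D_M)$ corresponds to the discrete series representation with Harish-Chandra parameter $\mu+\rho_c$, which is precisely $\DInd_K^G([V_\mu])$; since $\DInd_K^G$ is an isomorphism by the Connes--Kasparov conjecture for almost-connected groups~\cite{CEN}, it follows that $\ind_K(D_N)=[V_\mu]$ in $R(K)$.

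Applying Proposition~\ref{prop:trace2} to the class $[V_\mu]$ now gives
\[
\tau_G\bigl(\ind_G(D_M)\bigr)=\tau_G\bigl(\DInd_K^G[V_\mu]\bigr)=(-1)^{d/2}\left(\frac{\prod_{\alpha\in\Phi_n^+}(\mu+\rho_c,\alpha)\prod_{\alpha\in\Phi_c^+}(\rho_c,\alpha)}{\prod_{\alpha\in\Phi^+}(\rho,\alpha)}\right)\tau_K([V_\mu]).
\]
As observed in the proof of Proposition~\ref{prop:trace2}, $\tau_K([V_\mu])=\dim V_\mu$; and because $\ind_K(D_N)=[V_\mu]$, this equals $\tau_K(\ind_K(D_N))=\L2ind(D_N)$, which by the Atiyah--Singer index theorem for the compact Spin$^c$-manifold $N$ equals $\int_N e^{\frac12 c_1(L_N)}\hat{A}(N)$. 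Feeding these back through $\L2ind(D_M)=\tau_G(\ind_G(D_M))$ together with Wang's formula produces the whole chain of equalities.

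I expect the only delicate point to be the compatibility of normalisations: the Haar measure on $G$ used to define $\tau_G$ --- and hence the formal-degree formula underlying Propositions~\ref{prop:trace1} and~\ref{prop:trace2} --- is subject to the normalisation $\textnormal{vol}\,K=\textnormal{vol}\,M_1/K_1=1$ of Proposition~\ref{prop:trace1}, which must be checked against the normalisation implicit in the cut-off function $c$ of Wang's formula~\eqref{eq:L2indexformula}; one should also confirm that $\ind_K(D_N)$ is genuinely the single irreducible class $[V_\mu]$, so that Proposition~\ref{prop:trace2} applies with this particular $\mu$ --- this is exactly where the injectivity of Dirac induction and the discrete-series hypothesis are used. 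Beyond these checks the argument is a formal assembly of results already established.
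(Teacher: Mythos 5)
Your proposal is correct and matches the paper's intended argument: the paper supplies no separate proof for this corollary beyond the phrase ``Using Wang's formula for the $L^2$-index'', and the chain you assemble — Wang's formula for $\L2ind(D_M)$, the commutativity $\ind_G(D_M)=\DInd_K^G(\ind_K(D_N))$ of Theorem~\ref{thm:index.comm.induction}, injectivity of Dirac induction forcing $\ind_K(D_N)=[V_\mu]$, Proposition~\ref{prop:trace2} for the scalar, and $\tau_K([V_\mu])=\dim V_\mu=\int_N e^{\frac12 c_1(L_N)}\hat A(N)$ by Atiyah--Singer — is exactly the route the text is compressing. Your remark about checking that the Haar-measure normalisation of Proposition~\ref{prop:trace1} is consistent with the cut-off function in Wang's formula is a legitimate caveat, but it is a caveat the paper also leaves implicit rather than a gap in your reasoning.
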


	\section{Positive Scalar Curvature for Proper Cocompact Actions}\label{sec:psc}
	We now apply equivariant index theory to study obstructions to the existence of invariant metrics of positive scalar curvature on proper cocompact manifolds, before studying the existence of such metrics in the next subsection. 
	\subsection{Obstructions} Suppose that $G$ is a non-compact Lie group acting properly and cocompactly on a smooth, $G$-equivariantly spin, complete Riemannian manifold $M$. Suppose that the dimension of $M$ is even. By the Lichnerowicz formula \cite{Lichnerowicz} for the square of the $G$-Spin-Dirac operator $\dirac$,
	$$
	{\dirac}^2 = (\nabla^S)^\dagger \nabla^S + \kappa_M/4 
	$$
	where $\kappa_M$ denotes the scalar curvature of $M$ and $\nabla^S$ denotes the connection on the spinor bundle that is induced by the Levi-Civita connection on $M$. 
	Suppose now that $M$ has pointwise-positive scalar curvature. Since $M/G$ is compact, we have $\inf(\kappa_M)>0$, and thus ${\dirac}^2$ is a strictly positive operator from the Sobolev space $L^{2,2}(S)$ to the Hilbert space $L^2(S)$, with a bounded inverse \[
	(\dirac^2)^{-1}: L^2(S)\rightarrow L^{2,2}(S)
	\] 
	(see Theorem~2.11 in \cite{Gromov-Lawson}).
	Let $C^*(G)$ be the maximal group $C^*$-algebra of $G$. The space $C_c^\infty(S)$ of smooth cocompactly supported sections of $S$ has the structure of a right pre-Hilbert module over $C_c(G)$ prescribed by the formulas (for more details see \cite{Kasparov2016} Section 5):
	\begin{align*}
	(s\cdot b)(x) &= \int_G g(s)(x)\cdot b(g^{-1})\,dg\in C_c^\infty(S),\\
	(s_1,s_2)(g)&=\int_M(s_1(x),g(s_2)(x))\,d\mu\in C_c^\infty(G),
	\end{align*}
	for $s,s_1,s_2\in C_c^\infty(S)$ and $b\in C_c^\infty(G)$. Let us denote by $\mathcal{E}$ the Hilbert $C^*(G)$-module completion $\mathcal{E}$ of $C_c^\infty(S)$. Theorem~5.8 of \cite{Kasparov2016} shows that the pseudodifferential operator  $F=\dirac(\dirac^2+1)^{-\frac12}: \Gamma(S)\rightarrow\Gamma(S)$ defines an element, which we will also call $F$, in the bounded adjointable operators $\mathcal{L}(\mathcal{E})$, with an index in $K_0(C^*(G))$.
	
	The main result of this section is the following theorem on the vanishing of the equivariant index.
	
	\begin{theorem}[Vanishing Theorem 1]\label{vanishingtheorem1}
		Let $M, G, S$ and $\dirac$ be as above, with $M$ being even-dimensional. Suppose $M$ admits a Riemannian metric of pointwise-positive scalar curvature. Then
		$$
		{\rm index}_G(\dirac) = 0 \in K_0(C^*(G)).
		$$
	\end{theorem}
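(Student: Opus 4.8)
The plan is to run the Lichnerowicz--Gromov--Lawson argument at the level of the Hilbert $C^*(G)$-module $\cE$. The key observation is that once one works over $\cE$, the only algebra acting is $\C$ by scalars, so an odd self-adjoint operator whose square is \emph{exactly} the identity gives a degenerate Kasparov $(\C,C^*(G))$-cycle, hence the zero class in $K_0(C^*(G))$; the strategy is therefore to use positivity of the scalar curvature to replace $F=\dirac(\dirac^2+1)^{-\frac12}$, without changing its class, by such an operator.

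\textbf{Step 1 (spectral gap for $\dirac$ over $\cE$).} Since $M/G$ is compact, pointwise positivity of the scalar curvature gives $\kappa_0:=\inf_M\kappa_M>0$. The Lichnerowicz identity $\dirac^2=(\nabla^S)^\dagger\nabla^S+\kappa_M/4$ is an identity of differential operators on $C_c^\infty(S)$, so for $s\in C_c^\infty(S)$ it yields the inequality of elements of $C^*(G)$
\[
(\dirac s,\dirac s)=(\nabla^S s,\nabla^S s)+\tfrac14\big((\kappa_M-\kappa_0)s,s\big)+\tfrac{\kappa_0}{4}(s,s)\ \ge\ \tfrac{\kappa_0}{4}(s,s),
\]
the first two terms being of the form $(t,t)\ge 0$. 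Hence the regular self-adjoint extension of $\dirac$ to $\cE$ (constructed in \cite[\S5]{Kasparov2016}) satisfies $\dirac^2\ge\kappa_0/4$, so its spectrum avoids $(-\tfrac{\sqrt{\kappa_0}}{2},\tfrac{\sqrt{\kappa_0}}{2})$ and $\dirac$ is invertible in $\cL(\cE)$. This is the module-level analogue of the bounded invertibility of $\dirac^2$ on $L^2(S)$ recorded above following \cite{Gromov-Lawson}.

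\textbf{Step 2 (change of normalising function).} Fix a continuous odd $\chi\colon\R\to[-1,1]$ with $\chi(x)=x/|x|$ whenever $|x|\ge\tfrac{\sqrt{\kappa_0}}{2}$. By continuous functional calculus for the regular operator $\dirac$, $\chi(\dirac)\in\cL(\cE)$ is odd and self-adjoint and $\chi(\dirac)^2=(\chi^2)(\dirac)=1$ exactly, since $\chi^2\equiv 1$ on the spectrum of $\dirac$. The straight-line path of normalising functions $\chi_t:=(1-t)b+t\chi$, $b(x)=x/\sqrt{x^2+1}$, satisfies $\chi_t^2-1\in C_0(\R)$ for all $t$, so $t\mapsto\chi_t(\dirac)$ is a norm-continuous path in $\cL(\cE)$ with $\chi_t(\dirac)^2-1=(\chi_t^2-1)(\dirac)\in\cK(\cE)$; here one uses that $g(\dirac)\in\cK(\cE)$ for every $g\in C_0(\R)$, which follows from $C^*(G)$-compactness of the resolvent of $\dirac$ --- a consequence of $G$-cocompactness established in \cite[Theorem~5.8]{Kasparov2016}. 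Therefore $(\cE,F)=(\cE,\chi_0(\dirac))$ and $(\cE,\chi(\dirac))=(\cE,\chi_1(\dirac))$ are operator-homotopic and define the same class in $K_0(C^*(G))$.

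\textbf{Step 3 (conclusion).} In the cycle $(\cE,\chi(\dirac))$ the scalar action of $\C$ commutes with $\chi(\dirac)$ and $\chi(\dirac)^2-1=0$, so it is degenerate and represents $0\in KK(\C,C^*(G))\cong K_0(C^*(G))$; combined with Step 2 this gives $\ind_G(\dirac)=0$. I expect the only genuine work to lie in Steps 1 and 2, namely in upgrading the classical $L^2$ facts --- regularity and essential self-adjointness of $\dirac$, $C^*(G)$-compactness of its resolvent, and the Lichnerowicz spectral gap --- to the Hilbert $C^*(G)$-module $\cE$, all of which are available from the analysis in \cite[\S5]{Kasparov2016}; beyond that the argument is the standard vanishing-by-positivity mechanism and amounts to bookkeeping with Hilbert modules.
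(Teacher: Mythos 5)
Your proof is correct and rests on the same key idea as the paper's: run Lichnerowicz at the level of the Hilbert $C^*(G)$-module $\cE$ to get strict positivity of $\dirac^2$, and conclude the index vanishes because the Dirac cycle is trivial over $\cE$. Where the two arguments diverge is in the final packaging. The paper works harder to produce a bounded adjointable inverse of $\dirac^2$ explicitly (writing $\dirac^2=(1-\mu^2(\dirac^2+\mu^2)^{-1})(\dirac^2+\mu^2)$ and inverting the first factor by a Neumann series), deduces that $F=\dirac(\dirac^2+1)^{-1/2}$ is invertible in $\cL(\cE)$, and then plugs the exact parametrix $Q=F^{-1}$ into the explicit idempotent formula for the index, which collapses to zero because $S_0=S_1=0$. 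You instead use the strict positivity to place a spectral gap around zero and replace the normalising function $b$ by a $\chi$ with $\chi^2\equiv 1$ on $\operatorname{spec}(\dirac)$ via a norm-continuous operator homotopy, landing on a degenerate Kasparov cycle. Both routes are standard and correct; yours is arguably cleaner because it shifts the burden onto the continuous functional calculus for regular self-adjoint module operators (which you correctly cite from Kasparov, along with $C^*(G)$-compactness of the resolvent, the other essential input you and the paper both need). One minor point worth making explicit in your Step~1: the middle term $\tfrac14((\kappa_M-\kappa_0)s,s)$ is of the form $(t,t)$ with $t=\sqrt{\kappa_M-\kappa_0}\,s$, and this uses that $\kappa_M$ is a $G$-invariant function (so that $\sqrt{\kappa_M-\kappa_0}$ commutes with the $G$-action appearing in the $C^*(G)$-valued inner product). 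Otherwise no gaps.
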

	\begin{proof}
		The operator $\dirac : C_c^\infty(S)\rightarrow C_c^\infty(S)$ gives a densely-defined (and a priori unbounded) operator on the Hilbert $C^*(G)$-module $\mathcal{E}$. By $G$-invariance, one can verify that $\dirac$ is symmetric: that is, we have $C_c^\infty(S)\subseteq\textnormal{dom}(\dirac^*)$, where
		$$\textnormal{dom}(\dirac^*):=\{ y\in\mathcal{E}:\exists z\in\mathcal{E}\textnormal{ with }\langle\dirac x,y\rangle_\mathcal{E} = \langle x,z\rangle_\mathcal{E}\,\,\,\forall x\in C_c^\infty(S)\}.$$
		Hence $\dirac$ is a closable operator, and we shall write $\rm{dom}(\dirac)$ for the domain of its closure, a closed Hilbert $C^*(G)$-submodule of $\mathcal{E}$. In what follows we will be concerned with the closed operator $\dirac:\rm{dom}(\dirac)\rightarrow\mathcal{E}$. Define the {\em graph norm} $||\cdot||_{\textnormal{dom}(\dirac)}$ on $\textnormal{dom}(\dirac)$ to be the norm induced by the $C^*(G)$-valued inner product
		$$\langle u,v \rangle_{\textnormal{dom}(\dirac)} := \langle u,v\rangle_\mathcal{E} + \langle \dirac u, \dirac v\rangle_\mathcal{E}.$$
		With this norm, $\dirac$ is a bounded adjointable operator $\textnormal{dom}(\dirac)\rightarrow\mathcal{E}$. Since $\dirac$ is a first-order elliptic differential operator on a $G$-cocompact manifold, Theorem~5.8 of \cite{Kasparov2016} implies that both operators $\dirac\pm i$ have dense range as operators on $\mathcal{E}$. Thus $\dirac$ is a self-adjoint regular operator on the Hilbert $C^*(G)$-module $\mathcal{E}$ \cite{Lance}.
		
		Our main task is to show that $\dirac$ has an inverse $\dirac^{-1}:\mathcal{E}\rightarrow\textnormal{dom}(\dirac)$ that is bounded and adjointable. First we show that its square $\dirac^2:\textnormal{dom}(\dirac^2)\rightarrow\mathcal{E}$ is invertible in the bounded adjointable sense. Here the domain 
		$$\textnormal{dom}(\dirac^2):=\{u\in\textnormal{dom}(\dirac):\dirac u\in\textnormal{dom}(\dirac)\}$$
		is equipped with the graph norm induced by the inner product 
		$$\langle u,v\rangle_{\textnormal{dom}(\dirac^2)}:=\langle u,v\rangle_\mathcal{E} + \langle\dirac u,\dirac v\rangle_\mathcal{E} + \langle\dirac^2 u,\dirac^2 v\rangle_\mathcal{E}.$$
		By Proposition 1.20 of \cite{Ebert}, $\dirac^2$ is a densely-defined, closed, self-adjoint regular operator on $\mathcal{E}$. We proceed along the lines of \cite{Cecchini} Proposition 4.9. By regularity, $\dirac^2+\mu^2$ is surjective for every positive number $\mu^2$ (see Chapter 9 of \cite{Lance}). Further, since $\dirac^2$ is strictly positive, $\dirac^2 + \mu^2$ is injective. By the open mapping theorem, its inverse $(\dirac^2+\mu^2)^{-1}$ is bounded. It remains to show that $(\dirac^2+\mu^2)^{-1}$ is adjointable. Write for short $B:=(\dirac^2+\mu^2)^{-1}$. Note that $B$ is self-adjoint as a bounded operator $\mathcal{E}\rightarrow\mathcal{E}$ (defined by composing $B$ with the bounded inclusion $\textnormal{dom}(\dirac^2)\hookrightarrow\mathcal{E}$), which follows from Lemma 4.1 in \cite{Lance} and the estimate
		$$\langle u,Bu\rangle_{\mathcal{E}} = \langle(\dirac+\mu^2)Bu,Bu\rangle_\mathcal{E}\geq\mu^2\langle Bu,Bu\rangle_\mathcal{E}\geq 0.$$
		Next, for any $w\in\mathcal{E}$ and $u\in\textnormal{dom}(\dirac^2)$, we have
		\begin{align*}
		\langle B u,w\rangle_{\textnormal{dom}(\dirac^2)} &= \langle\dirac^2 Bu,\dirac^2 w\rangle_\mathcal{E} +\langle\dirac Bu,\dirac w\rangle_\mathcal{E} + \langle B u, w\rangle_{\mathcal{E}}\\
		&=\langle(\dirac^2+\mu^2)Bu,\dirac^2 w\rangle_{\mathcal{E}} + (1-\mu^2)\langle Bu,\dirac^2 w\rangle_{\mathcal{E}}+ \langle u,Bw\rangle_{\mathcal{E}}\\
		&=\langle u,\dirac^2 w\rangle_{\mathcal{E}} + (1-\mu^2)\langle u,B\dirac^2 w\rangle_{\mathcal{E}} + \langle u,B w\rangle_{\mathcal{E}}\\
		&= \langle u,(\dirac^2 + (1- \mu^2)B\dirac^2 + B)w\rangle_{\mathcal{E}},
		\end{align*}
		where we used symmetry of $\dirac$ in $\mathcal{E}$ self-adjointness of $B$ as shown above. This shows that $(\dirac^2+\mu^2)^{-1}\in\mathcal{L}(\mathcal{E},\textnormal{dom}(\dirac^2))$. 
		
		We claim that $\dirac^2$ is invertible. For write it as $(1-\mu^2(\dirac^2+\mu^2)^{-1})(\dirac^2+\mu^2)$. Since $\dirac^2$ is a strictly positive operator, there exists $C>0$ such that for all $s\in\textnormal{dom}(\dirac^2)$, we have $\langle\dirac^2 s,s\rangle_{\mathcal{E}}\geq C\langle s,s\rangle_{\mathcal{E}}$. It follows from the Cauchy-Schwarz inequality for Hilbert modules that for any element $t\in\mathcal{E}$,
		$$||\mu^2(\dirac^2+\mu^2)^{-1}t||_\mathcal{E}\leq\frac{\mu^2}{\mu^2+C}||(\dirac^2+\mu^2)Bt||_\mathcal{E} = \frac{\mu^2}{\mu^2+C}||t||_\mathcal{E}.$$
		Hence $(1-\mu^2(\dirac^2+\mu^2)^{-1})$ has an adjointable inverse given by a Neumann series. It follows that $\dirac^2$ has a bounded adjointable inverse 
		$$(\dirac^2)^{-1} = B(1-\mu^2(\dirac+\mu^2)^{-1})^{-1}:\mathcal{E}\rightarrow\textnormal{dom}(\dirac^2).$$
		One verifies that $\dirac(\dirac^2)^{-1}:\mathcal{E}\rightarrow\textnormal{dom}(\dirac)$ is a two-sided inverse for $\dirac$. Thus $\dirac+\lambda$ is invertible for all $\lambda$ in a ball around $0\in\mathbb{C}$. Thus, by the functional calculus for self-adjoint regular operators (see Theorem 1.19 of \cite{Ebert} for a list of its properties), we may define the operator $$\frac{\sqrt{\dirac^2+1}}{\dirac}\in\mathcal{L}(\mathcal{E}),$$ which is the inverse of $F=\frac{\dirac}{\sqrt{\dirac^2+1}}\in\mathcal{L}(\mathcal{E})$ by the homomorphism property of the functional calculus. Recall that the $G$-equivariant index of $\dirac$ can be computed by taking trace of the idempotent element
		\[
		\begin{bmatrix}S_0^2 & S_0(1+S_0)Q \\ S_1F & 1-S_1^2\end{bmatrix}-\begin{bmatrix}0 & 0 \\ 0 & 1 \end{bmatrix}\in\textnormal{Mat}_{\infty}(\mathcal{K}(\mathcal{E})),
		\] 
		where $Q$ is any parametrix of $F$ and $S_0=1-QF$, $S_1=1-FQ$ (see for example the first display on p.~353 of \cite{Connes-Moscovici}).
		In our setting we can take $Q = F^{-1}$, hence $S_0=S_1=0$. It then follows that the $G$-equivariant index of $\dirac$ vanishes, after making the canonical identification $K_0(\mathcal{K}(\mathcal{E}))\cong K_0(C^*(G))$.
	\end{proof}
	Now, as defined in \cite{MZ}, integration $\int_G$ on $L^1(G)$ induces a tracial map on $C^*(G)$ and also on its $K$-theory. By Appendix \ref{sec:AppA} of this paper, 
	we have $\int_G({\rm index}_G(\dirac)) = {\rm ind}_G(\dirac)$ where the right-hand side is the $G$-invariant index of Mathai-Zhang \cite{MZ}. Thus we immediately obtain as a corollary
	the following recent result of Zhang \cite{Zhang17}:
	
	\begin{corollary}[\cite{Zhang17}]
		Let $M, G$ and $\dirac$ be as in Theorem \ref{vanishingtheorem1}, with $M$ being even-dimensional. Then
		$${\rm ind}_G(\dirac)=0,$$
		where ${\rm ind}_G$ denotes the Mathai-Zhang index.
	\end{corollary}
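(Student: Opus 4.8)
The plan is to deduce the corollary immediately from Theorem~\ref{vanishingtheorem1}, once one knows that the Mathai--Zhang index is the image of the $C^*(G)$-valued equivariant index under the canonical trace. First I would apply Theorem~\ref{vanishingtheorem1}: under exactly the hypotheses in force here ($G$ a non-compact Lie group acting properly and cocompactly on an even-dimensional, $G$-equivariantly spin, complete Riemannian manifold $M$ admitting a metric of pointwise-positive scalar curvature), that theorem already gives
\[
\mathrm{index}_G(\dirac) = 0 \in K_0(C^*(G)).
\]

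Second, I would recall the map relating this class to the Mathai--Zhang index. Integration over $G$, restricted from the augmentation $\int_G \colon L^1(G) \to \C$, extends to a trace on $C^*(G)$ and hence induces a group homomorphism $\int_G \colon K_0(C^*(G)) \to \C$, exactly as in \cite{MZ}. The content of Appendix~\ref{sec:AppA} is the identity $\int_G\big(\mathrm{index}_G(\dirac)\big) = \mathrm{ind}_G(\dirac)$; that is, the Mathai--Zhang index equals the trace of the $K$-theoretic equivariant index of $\dirac$.

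Finally, combining the two is purely formal: since $\int_G$ is a group homomorphism it sends $0$ to $0$, so $\mathrm{ind}_G(\dirac) = \int_G\big(\mathrm{index}_G(\dirac)\big) = \int_G(0) = 0$, which is the assertion. The only substantive ingredient is the compatibility recorded in Appendix~\ref{sec:AppA}; I expect proving that identity — matching the Mathai--Zhang recipe (built from a parametrix/heat-type construction for $\dirac$ on the Hilbert $C^*(G)$-module together with the cut-off function $c$) against the evaluation of the trace $\int_G$ on the index idempotent appearing in the proof of Theorem~\ref{vanishingtheorem1} — to be where the genuine work lies, whereas the deduction of the corollary from it and Theorem~\ref{vanishingtheorem1} is a one-line diagram chase.
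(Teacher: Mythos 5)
Your proposal matches the paper's argument exactly: apply Theorem~\ref{vanishingtheorem1} to get $\mathrm{index}_G(\dirac)=0$ in $K_0(C^*(G))$, then invoke the identity $\int_G(\mathrm{index}_G(\dirac))=\mathrm{ind}_G(\dirac)$ from Appendix~\ref{sec:AppA}, and conclude by functoriality of the induced map on $K$-theory. You also correctly locate the actual content in the appendix's comparison of the Mathai--Zhang construction with the integration trace.
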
 
	
	Using the canonical projection from $C^*(G)$ to $C^*_r(G)$, Theorem \ref{vanishingtheorem1} also gives a vanishing result for the reduced equivariant index:
	\begin{theorem}[Vanishing Theorem 2]
		Let $M, G$ and $\dirac$ be as in Theorem \ref{vanishingtheorem1}, with $M$ being even-dimensional. Then 
		$$
		{\rm index}_G(\dirac) = 0 \in K_0(C^*_r(G)).
		$$
	\end{theorem}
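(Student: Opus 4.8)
The plan is to deduce Vanishing Theorem 2 directly from Theorem~\ref{vanishingtheorem1} by functoriality of the equivariant index under the canonical quotient $*$-homomorphism $\lambda\colon C^*(G)\to C^*_r(G)$. Since $\lambda$ is surjective, it induces a homomorphism $\lambda_*\colon K_0(C^*(G))\to K_0(C^*_r(G))$, and the point is simply that the reduced equivariant index ${\rm index}_G(\dirac)\in K_0(C^*_r(G))$ is the image under $\lambda_*$ of the maximal equivariant index ${\rm index}_G(\dirac)\in K_0(C^*(G))$ constructed in the proof of Theorem~\ref{vanishingtheorem1}.

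To justify that compatibility, I would use the Hilbert module picture: the reduced index is computed from the interior tensor product $\mathcal{E}\otimes_\lambda C^*_r(G)$, on which $\dirac$ induces the corresponding self-adjoint regular operator and $F$ induces $F\otimes 1$, so that the idempotent element of $\mathrm{Mat}_\infty(\mathcal{K}(\cdot))$ displayed in the proof of Theorem~\ref{vanishingtheorem1} is carried to its reduced analogue by the functor $-\otimes_\lambda C^*_r(G)$; this is exactly the naturality statement underlying the constructions of \cite{Kasparov2016} Section~5. Alternatively, one may bypass this and repeat the argument of Theorem~\ref{vanishingtheorem1} verbatim over $C^*_r(G)$ instead of $C^*(G)$, observing that the only geometric input — the Lichnerowicz estimate ${\dirac}^2\geq\inf(\kappa_M)/4>0$, which yields strict positivity and hence bounded adjointable invertibility of $\dirac$ on the Hilbert module — is a pointwise inequality of differential operators on $M$ and therefore persists on the reduced completion. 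In either case, applying $\lambda_*$ to the identity ${\rm index}_G(\dirac)=0\in K_0(C^*(G))$ from Theorem~\ref{vanishingtheorem1} gives ${\rm index}_G(\dirac)=\lambda_*(0)=0\in K_0(C^*_r(G))$, which is the claim.

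There is no genuine obstacle here; the statement is essentially a corollary. The only step needing a clean formulation is the compatibility of the two indices under $\lambda_*$, which is routine once the index is phrased in terms of Hilbert modules, and for which it suffices to cite \cite{Kasparov2016} (or, equivalently, to invoke the standard functoriality of the analytic index/assembly map with respect to the quotient $C^*(G)\to C^*_r(G)$).
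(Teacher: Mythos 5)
Your proposal matches the paper's approach exactly: the paper derives Vanishing Theorem 2 in a single sentence by applying the canonical projection $C^*(G)\to C^*_r(G)$ and its induced map on $K_0$ to the vanishing statement of Theorem~\ref{vanishingtheorem1}. Your elaboration via $-\otimes_\lambda C^*_r(G)$ on the Hilbert module is a correct and more detailed justification of the same functoriality the paper invokes implicitly.
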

	
	Upon applying the von Neumann trace $\tau$ to this index, we conclude that 
	$${\rm index}_{L^2}(\dirac) = \tau({\rm index}_G(\dirac)) = 0,$$ where the $L^2$-index on the left is 
	as defined in \cite{Wang}. Let $c\in C^{\infty}_c(M)$ be a cut-off function, that is to say a non-negative function satisfying 
	\[
	\int_Gc(g^{-1}m)\, dg=1
	\]
	for all $m\in M$, for a fixed left Haar measure $dg$ on $G$. The \emph{averaged $\hat A$-genus} of the action by $G$ on $M$ is then defined to be (see also \cite{Fukumoto})
	\[
	\Aav(M) := \int_M c\hat A_M.
	\]
	We have as a consequence of Vanishing Theorem 2, together with the $L^2$-index theorem of Wang \cite{Wang}, the following result proved by Fukumoto (Corollary B in \cite{Fukumoto}):
	
	\begin{corollary}[\cite{Fukumoto}]
		Let $M$ and $G$ be as in Theorem \ref{vanishingtheorem1}, and assume further that $G$ is unimodular. Then $ \Aav(M)=0$.
	\end{corollary}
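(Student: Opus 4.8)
The plan is to combine Vanishing Theorem~2 with the $L^2$-index theorem of Wang~\cite{Wang}. First I would invoke Vanishing Theorem~2 under the standing hypotheses of Theorem~\ref{vanishingtheorem1}, in particular the assumption that $M$ carries a $G$-invariant metric of pointwise-positive scalar curvature: this gives
\[
\mathrm{index}_G(\dirac)=0\in K_0(C^*_r(G)).
\]
Applying the von Neumann trace $\tau$ to this $K$-theory class then yields $\mathrm{index}_{L^2}(\dirac)=\tau(\mathrm{index}_G(\dirac))=0$, where $\mathrm{index}_{L^2}$ is the $L^2$-index recalled in Section~\ref{sec:reduction}; this step uses only linearity and the fact that $\tau$ is a well-defined tracial map on $K_0(C^*_r(G))$.

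Next I would unwind what $\mathrm{index}_{L^2}(\dirac)$ actually computes. Applying Wang's integral formula~\eqref{eq:L2indexformula} to the bare $G$-Spin-Dirac operator $\dirac$ — so that the determinant line bundle is trivial and $c_1(L_M)=0$ — the same reduction performed just after~\eqref{eq:L2indexformula} collapses the characteristic-class integral over $TM$ to an integral over $M$, giving
\[
\mathrm{index}_{L^2}(\dirac)=\int_M c\,\hat{A}(M)=\Aav(M),
\]
where $c$ is a cut-off function for the proper cocompact $G$-action. Combining this with the vanishing from the previous paragraph immediately gives $\Aav(M)=0$.

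The argument is essentially a two-step diagram chase once Wang's $L^2$-index theorem and the identity $\mathrm{index}_{L^2}=\tau\circ\mathrm{index}_G$ are granted, so there is no serious obstacle; the only points requiring care are that the reduction of~\eqref{eq:L2indexformula} to $\int_M c\,\hat{A}(M)$ for the untwisted Dirac operator is precisely the cut-off-localised version of the classical computation $\mathrm{index}(\dirac)=\int_M\hat{A}(M)$, and that the unimodularity hypothesis on $G$ is what guarantees the projection $p$ of~\eqref{eq:projection} carries no modular factor, so that $\Aav(M)$ is a well-defined invariant of the action independent of the choice of $c$ and coincides with $\tau\bigl(\mathrm{index}_G(\dirac)\bigr)$.
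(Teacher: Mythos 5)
Your proposal follows the paper's own argument exactly: invoke Vanishing Theorem~2 to get $\mathrm{index}_G(\dirac)=0$ in $K_0(C^*_r(G))$, apply the von Neumann trace $\tau$ to obtain $\mathrm{index}_{L^2}(\dirac)=0$, and then identify the $L^2$-index with $\Aav(M)$ via Wang's integral formula \eqref{eq:L2indexformula}. One small imprecision: unimodularity enters not to strip a modular factor from the projection $p$ (that factor is already built into \eqref{eq:projection} for general $G$), but because the von Neumann trace $\tau$ on $C^*_r(G)$ is a genuine trace — and Wang's $L^2$-index theorem applies — only when $G$ is unimodular.
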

	
	\subsection{Existence}
	The previous obstruction results are motivated by the following existence results. In this subsection, we suppose that $G$ is almost-connected, while $M$ is still a proper $G$-cocompact manifold. By Abels' slice theorem \cite{Abels}, $M$ is $G$-equivariantly diffeomorphic to $G\times_K N$,
	where $K$ is a maximal compact subgroup of $G$, $N$ is a compact manifold with an action of $K$. One of our main theorems is the following.
	
	\begin{theorem}\label{thm:induction}
		Let $G$ be an almost-connected Lie group and that $K$ is a maximal compact subgroup of $G$. If $N$ is a compact manifold with a $K$-invariant Riemannian metric of positive scalar curvature, then  $M=G\times_K N$ has a $G$-invariant Riemannian metric of pointwise-positive scalar curvature.
	\end{theorem}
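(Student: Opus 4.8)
The plan is to equip $M = G\times_K N$ with a one-parameter family of ``connection metrics'' $g_\epsilon$, $\epsilon>0$, and to show that for $\epsilon$ small enough $g_\epsilon$ has everywhere-positive scalar curvature. First I would use Abels' theorem (Theorem~\ref{thm:abels}) to regard $\pi\colon M = G\times_K N\to G/K$, $[g,n]\mapsto gK$, as a $G$-equivariant fibre bundle with fibre $N$, associated to the principal $K$-bundle $G\to G/K$. Fix an $\Ad(K)$-invariant inner product on $\kp$ (where $\kg=\kk\oplus\kp$); this yields a $G$-invariant Riemannian metric $\check g$ on $G/K$, which, being homogeneous, has constant scalar curvature $s_0$. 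Let $\theta$ be the canonical principal connection on $G\to G/K$ determined by this splitting, so that its horizontal distribution is left-$G$-invariant, and let $g_N$ be the given $K$-invariant metric of positive scalar curvature on $N$; since $N$ is compact, $\kappa_N\ge c_0$ for some constant $c_0>0$.

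For each $\epsilon>0$, define $g_\epsilon$ on $M$ to be $\epsilon^2 g_N$ on the vertical distribution $\ker d\pi$, $\pi^*\check g$ on the $\theta$-horizontal distribution, and declare these orthogonal. Each of the three ingredients is $G$-invariant, so $g_\epsilon$ is a $G$-invariant metric, and $\pi\colon(M,g_\epsilon)\to(G/K,\check g)$ is a Riemannian submersion whose fibres are isometric to $(N,\epsilon^2 g_N)$. Since $M/G\cong N/K$ is compact, $M$ is $G$-cocompact, so every $G$-invariant continuous function on $M$ descends to the compact quotient and is in particular bounded.

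Next I would compute $\kappa_{g_\epsilon}$ using the O'Neill/Besse submersion formulas. By Vilms' theorem, a connection metric on an associated bundle whose structure group acts by isometries on the fibre has totally geodesic fibres; hence the second fundamental form and mean curvature vector of the fibres vanish, and the scalar curvature formula collapses to
\[
\kappa_{g_\epsilon} \;=\; \tfrac{1}{\epsilon^2}\,\kappa_N \;+\; s_0 \;-\; |A_\epsilon|^2,
\]
where $\kappa_N$ is read as the $G$-invariant function on $M$ coming from the fibrewise scalar curvature of $g_N$, and $A$ is the integrability (O'Neill) tensor of the horizontal distribution. The tensor $A$ is determined by $\theta$ alone (independently of $\epsilon$) and takes values in the vertical bundle, so rescaling the vertical metric by $\epsilon^2$ gives $|A_\epsilon|^2=\epsilon^2\,|A_1|^2$, where $|A_1|^2$ is a $G$-invariant, hence bounded, function on $M$. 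Therefore
\[
\kappa_{g_\epsilon} \;\ge\; \tfrac{c_0}{\epsilon^2} \;-\; |s_0| \;-\; \epsilon^2\,\sup_M |A_1|^2,
\]
which is strictly positive once $\epsilon$ is sufficiently small. Such a $g_\epsilon$ is then the desired $G$-invariant metric of pointwise-positive scalar curvature on $M$.

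The main obstacle is the bookkeeping in the previous paragraph: one must verify that the fibres of the connection metric really are totally geodesic, so that the divergence and $|T|^2$ terms of O'Neill's scalar curvature formula drop out; track the $\epsilon$-powers in the surviving curvature terms --- crucially that the positive fibre term scales like $\epsilon^{-2}$ while the correction $|A_\epsilon|^2$ scales like $\epsilon^2$ --- and check that the remaining contributions $s_0$ and $\sup_M|A_1|^2$ are finite, which is exactly where homogeneity of $G/K$ (giving $s_0$ constant) and $G$-cocompactness of $M$ (bounding $|A_1|^2$) are used.
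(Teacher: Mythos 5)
Your proof is correct and follows essentially the same route as the paper's: build a $G$-invariant Riemannian-submersion metric on $M=G\times_K N$ from the $\Ad(K)$-invariant splitting $\kg=\kk\oplus\kp$, use the (equivariant version of) Vilms' theorem to conclude the fibres are totally geodesic, apply the O'Neill/Kramer formula for the scalar curvature of a Riemannian submersion, and then shrink the fibre metric. The only differences worth noting are that your bookkeeping is a touch more careful --- you correctly track that the O'Neill term scales as $|A_\epsilon|^2=\epsilon^2|A_1|^2$, whereas the paper's estimate leaves $A_0$ unscaled (conservative but still sufficient), and you only need $|s_0|<\infty$ rather than the paper's assertion that $\kappa_{G/K}<0$, which can fail in degenerate cases such as $G=\R^n$, $K=\{e\}$ --- but neither refinement is needed to close the argument.
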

	
	To prove it, recall the following theorem of Vilms: 
	
	\begin{theorem}[\cite{Vilms}]
		Let $\pi\colon M\to B$ be a fibre bundle with fibre $N$ and structure group $K$. 
		Let $g_B$ be a Riemannian metric on $B$ and $g_N$ be a $K$-invariant Riemannian metric on $N$. Then there is a Riemannian metric $g_M$ on $M$ such that $\pi$ is 
		a Riemannian submersion with totally geodesic fibres.
	\end{theorem}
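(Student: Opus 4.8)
The plan is to realise $\pi\colon M\to B$ as an associated bundle, build $g_M$ from a principal connection, and then verify the totally geodesic condition by a Koszul-formula computation. First, since the structure group of the bundle is $K$, I would write $M=P\times_K N$ for a principal $K$-bundle $P\to B$, where $K$ acts on $N$ through isometries of $g_N$ (this is exactly the $K$-invariance hypothesis, so all transition functions of $M$ lie in $\mathrm{Isom}(N,g_N)$). This makes the fibrewise metric on $M$ well defined: on each vertical tangent space $\mathcal{V}_m:=\ker(d\pi)_m$ one declares $g_M^{\mathcal{V}}$ to be the metric carried over from $g_N$ by any local trivialisation, and independence of the trivialisation follows because the transitions are isometries of $g_N$. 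Next I would choose a principal connection on $P$ (which exists by a partition-of-unity argument on $B$); it induces a $K$-equivariant horizontal distribution on $P$ and hence an Ehresmann connection $\mathcal{H}\subseteq TM$, so that $TM=\mathcal{H}\oplus\mathcal{V}$ with $(d\pi)_m$ restricting to an isomorphism $\mathcal{H}_m\xrightarrow{\sim}T_{\pi(m)}B$. Finally I would define $g_M$ by declaring $\mathcal{H}\perp\mathcal{V}$, setting $g_M|_{\mathcal{V}}=g_M^{\mathcal{V}}$, and setting $g_M|_{\mathcal{H}}=\pi^*g_B$ via this isomorphism. With this definition $\pi\colon(M,g_M)\to(B,g_B)$ is a Riemannian submersion essentially by construction.

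The heart of the proof is showing that the fibres are totally geodesic, i.e.\ that the second fundamental form $\mathrm{II}(V,W)=(\nabla_V W)^{\mathcal{H}}$ of a fibre vanishes for vertical vector fields $V,W$, where $\nabla$ is the Levi-Civita connection of $g_M$. Since the horizontal lifts of vector fields on $B$ span $\mathcal{H}$ pointwise, it is enough to show that $g_M(\nabla_V W,X)=0$ whenever $X$ is a \emph{basic} horizontal field, namely the horizontal lift of some $\bar X\in\mathfrak{X}(B)$. Here I would use the elementary facts — valid for any Ehresmann connection, since $d\pi$ kills the relevant brackets — that $[V,W]$ and $[X,V]$ are vertical. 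Feeding these, together with $g_M(V,X)=g_M(W,X)=0$, into the Koszul formula gives
\[
2\,g_M(\nabla_V W,X)=-X\cdot g_M(V,W)+g_M([X,V],W)+g_M([X,W],V)=-(\mathcal{L}_X g_M)(V,W),
\]
so the fibres are totally geodesic precisely when the flow of every basic horizontal field preserves the vertical metric $g_M^{\mathcal{V}}$.

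To establish this last point I would argue that the flow $\phi_t^X$ of a basic field $X$ lifting $\bar X$ is exactly parallel transport in the bundle $M\to B$ along integral curves of $\bar X$ with respect to $\mathcal{H}$, and that — because $\mathcal{H}$ is induced from a principal connection on $P$ — this parallel transport $N_{\gamma(0)}\to N_{\gamma(t)}$ is the composite of parallel transport in $P$ with the $K$-action on $N$. It is therefore realised by an element of $K$, hence an isometry of $(N,g_N)$, and so preserves $g_M^{\mathcal{V}}$; consequently $(\mathcal{L}_X g_M)(V,W)=0$ for all vertical $V,W$, giving $\mathrm{II}\equiv0$. I expect the main obstacle to be precisely this identification of the flow of basic horizontal fields with structure-group-valued parallel transport: once one knows the holonomy of $\mathcal{H}$ lies in $K\subseteq\mathrm{Isom}(N,g_N)$, the vertical metric is automatically flow-invariant and the rest is the routine Koszul computation. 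A secondary technical point to handle with care is checking that horizontal lifts of vector fields on $B$ do span $\mathcal{H}$ at each point and that the various brackets behave as claimed; both are standard but should be spelled out.
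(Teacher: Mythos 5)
Your construction of $g_M$ is exactly the one in the paper (and in Vilms): realise $M$ as an associated bundle, take an Ehresmann connection $\mathcal H$ coming from a principal connection, and set $g_M=\pi^*g_B\oplus g_M^{\mathcal V}$ with $\mathcal H\perp\mathcal V$; and both arguments hinge on the same key fact, namely that parallel transport of $\mathcal H$ between fibres is realised by the $K$-action and is therefore an isometry for the fibre metrics. Where you genuinely diverge is in how total geodesy is extracted from this fact. The paper, following Vilms (and Hermann), takes a curve $\alpha$ in a fibre, transports it along the base, uses constancy of arc-length to get $f'(0)=0$ in the first variation formula, and then runs a limiting argument (reparametrising $\alpha$ and extracting a sequence $t_i\to 0$) to conclude $\langle D_t\alpha'(0),v(0)\rangle=0$ for every horizontal $v(0)$. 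You instead compute with the Koszul formula, reducing $2\,g_M(\nabla_VW,X)=-(\mathcal L_Xg_M)(V,W)$ for vertical $V,W$ and basic horizontal $X$ (the sign bookkeeping here is correct: the terms $Vg(W,X)$, $Wg(X,V)$ and $g([V,W],X)$ all vanish by orthogonality and verticality of $[V,W]$), and then observe that the flow of a basic field restricts on fibres to holonomy, hence to an isometry, so the Lie derivative vanishes. Your route is more algebraic and avoids the limiting argument entirely; the paper's route is the classical variational one and is closer to the original source. Both are complete; the only points you flag as needing care (basic fields spanning $\mathcal H$, tensoriality in $X$, local existence of the flow) are indeed routine.
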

	
	We prove an equivariant version this result, namely:
	
	\begin{theorem}
		\label{thm:eq.Vilms}
		Let $\pi\colon M\to B$ be a fibre bundle with compact fibre $N$ and structure group $K$. 
		Suppose that $M$ and $B$ both have proper $G$-actions making $\pi$ $G$-equivariant.
		Let $g_N$ be a $K$-invariant Riemannian metric on $N$. Then there is a $G$-invariant Riemannian metric $g_M$ on $M$ such that $\pi$ is 
		a $G$-equivariant Riemannian submersion with totally geodesic fibres.
	\end{theorem}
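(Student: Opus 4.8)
The plan is to run the proof of Vilms' theorem equivariantly, making every choice that enters that construction $G$-invariant. Recall how Vilms' theorem is proved: since $\pi$ has structure group $K$, the total space is an associated bundle $M\cong P\times_K N$ for a principal $K$-bundle $P\to B$, with $K$ acting on $(N,g_N)$ by isometries. A principal connection $\theta$ on $P$ gives a horizontal distribution $H\subset TM$ complementary to the vertical bundle $V:=\ker d\pi$; together with a metric $g_B$ on $B$ and the given $g_N$ it determines a metric $g_M$ on $M$ by declaring $H\perp V$, $g_M|_H:=\pi^*g_B$, and letting $g_M|_V$ be the fibre metric that in every $K$-trivialisation restricts to $g_N$ (this is well defined precisely because the transition functions lie in $K$ and $g_N$ is $K$-invariant). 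Vilms' theorem asserts that for this $g_M$ the projection $\pi$ is a Riemannian submersion with totally geodesic fibres. Since the only freedom in the construction is the choice of $g_B$ and $\theta$, and the construction is manifestly natural with respect to automorphisms of the pair $(P\to B,\,g_N)$, it will suffice to choose $g_B$ and $\theta$ to be $G$-invariant.

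First I would lift the $G$-action to the principal bundle. We may assume --- as is the case in all situations of interest, in particular when $B=G/K$, $M=G\times_K N$ and $G$ acts by left translation --- that the $G$-action on $M$ is by automorphisms of the $K$-bundle $\pi$; equivalently it lifts to an action of $G$ on $P$ covering the action on $B$ and commuting with the right $K$-action, with $M=P\times_K N$ as $G$-spaces. This lifted action is proper: the image in $B$ of a compact subset of $P$ is compact, and the set of group elements moving it into itself is relatively compact because $G$ acts properly on $B=P/K$. Next, since $G$ acts properly on $B$, the base carries a $G$-invariant Riemannian metric $g_B$ (average an arbitrary metric over $G$ using a $G$-invariant partition of unity; for $B=G/K$ one may simply take a normal homogeneous metric). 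Finally, the principal connections on $P$ form an affine space modelled on the sections of a $G$-equivariant affine bundle over the proper $G$-space $B$ (with linear part $\Omega^1(B;\operatorname{ad}P)$), so averaging an arbitrary principal connection over $G$ by the same partition-of-unity argument produces a $G$-invariant principal connection $\theta$; averaging respects the defining identities of a connection because these are affine.

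Feeding the $G$-invariant data $(g_B,g_N,\theta)$ into Vilms' construction then produces a metric $g_M$ on $M$ which is $G$-invariant by naturality of the construction, and for which, by Vilms' theorem, $\pi$ is a Riemannian submersion with totally geodesic fibres; since $g_M$ and $g_B$ are $G$-invariant and $\pi$ is $G$-equivariant, $\pi$ is then a $G$-equivariant Riemannian submersion with totally geodesic fibres, which is the assertion. The routine but essential points to check carefully are that the $G$-action genuinely lifts to $P$ (i.e. that it respects the $K$-reduction --- automatic in the intended application, where $P=G$), and the existence of a $G$-invariant principal connection on a base that is not assumed cocompact, which follows from the standard averaging argument for equivariant vector and affine bundles over proper $G$-spaces. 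I expect the main point needing care to be the bookkeeping that makes precise that Vilms' construction is natural enough for $G$-invariant inputs to yield a $G$-invariant output; none of this is difficult.
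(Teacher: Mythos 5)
Your proof is correct and follows essentially the same strategy as the paper: equivariantize Vilms' construction by choosing all the ingredients---the base metric, the fibre metric, and the horizontal distribution---to be $G$-invariant, and then appeal to Vilms' theorem for the totally geodesic property. The paper works directly with an Ehresmann connection $H\subset TM$ rather than passing to a principal connection on $P$, and it supplies a self-contained argument (via the first variation of arc-length) that the fibres are totally geodesic, instead of citing Vilms' theorem as a black box as you do. Conversely, you are more explicit than the paper about two points it glosses over: (i) the horizontal distribution must be $G$-invariant in order for the metric $g_H$ pulled back from $g_B$ to be $G$-invariant, and the existence of such a distribution requires an averaging argument, which you carry out by averaging a principal connection over the proper (lifted) $G$-action on $P$; and (ii) the $G$-action on $M$ must respect the $K$-bundle structure (i.e., lift to $P$), which you flag and justify via properness of the lifted action, whereas the paper takes this for granted---harmlessly, since it is automatic in the only application, $M = G\times_K N$ over $B=G/K$ with $P=G$. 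In short, same approach, with the paper investing effort in the totally geodesic computation and you investing it in the equivariance of the horizontal complement.
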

	\begin{proof}
		We adapt the proof of Theorem 3.5 in \cite{Vilms}. Let us choose an Ehresmann connection $H$ on $M$ (with the horizontal lifting property), so that $TM = H\oplus V$, where $V$ is the vertical subbundle $\ker\pi_*$. By a result of Palais \cite{Palais} there exists a $G$-invariant metric $g_B$ on $B$. Using the map $\pi_*|_H$, this induces a $G$-invariant metric $g_H$ on $H$. Also $g_N$ is $K$-invariant and so can be transferred to a metric $g_V$ on $V$. Define the metric $g_M = \langle\,,\rangle_M$ as the direct sum $g_H\oplus g_V$. One verifies that $\pi$ is a Riemannian submersion. In what follows, let the Levi-Civita connections on $M$ and $N$ be denoted $D$ and $D^N$ respectively.
		
		Note that parallel transport along a curve in $B$ gives a $K$-isomorphism of fibres of $M$. Since the metric on $N$ is $K$-invariant, the above construction of $g_M$ means that parallel transport is furthermore an isometry of the fibres.
		
		To show that the fibres of $M$ are totally geodesic, it suffices to show that $D$ and $D^N$ act in the same way on tangent vectors to curves in any given fibre. Thus take any curve $\alpha: I\rightarrow M_b$, where $M_b$ is the fibre over a base point $b\in B$, and parameterise it proportionally to arc-length. Let $v(0)$ be a given horizontal vector at $\alpha(0)$, and define $\sigma(t,s)$ to be the parallel translation of $\alpha(t)$ along some curve in $B$ with initial vector $\pi_* v(0)$. Define the horizontal vector field
		$$v(t):=\frac{\partial}{\partial s}\sigma(t,0)$$
		along $\alpha(t)$. For each $s$, let $f(s)$ denote the arc-length of the curve $\sigma(s,t)$ from $t=0$ to $1$. Since parallel transport is an isometry, the arc-length $f(s)$ is a constant with respect to $s$. In particular, we have $f'(0) = 0$. By the formula for the first variation of the arc-length \cite{Hermann}, we have
		$$f'(0) = \int_0^1 \frac{1}{||\alpha'(0)||}\langle D_t\alpha'(t),v(t)\rangle_M\, dt = \int_0^1 \frac{1}{f(0)}\langle D_t\alpha'(t),v(t)\rangle_M\, dt = 0,$$
		whence by continuity, there must be $t_1\in I$ such that $\langle D_t\alpha'(t_1),v(t_1)\rangle_M = 0$. Next let $\alpha_1(t):=\alpha(\frac{t}{2})$, $0\leq t\leq 1$, and apply the same procedure. Since $D_t \alpha'_1(t)$ and $D_t \alpha'(t)$ are proportional, we have $\langle D_t\alpha'(t_2),v(t_2)\rangle_M = 0$ for some $0\leq t_2\leq\frac{1}{2}$. Repeating this procedure gives a sequence $t_i\rightarrow 0$ for which the quantity $\langle D_t\alpha'(t),v(t)\rangle_M$ vanishes, which implies that $\langle D_t\alpha'(0),v(0)\rangle_M = 0$.
		
		As $v(0)$ was arbitrarily chosen in $H_b$, we conclude that $D_t\alpha'(0)$ is vertical and so equal to $D_t^N\alpha'(0)$. This concludes the proof that the fibres are totally geodesic.
	\end{proof}
	
	\begin{proof}[Proof of Theorem \ref{thm:induction}] Let $\kappa_{G/K}$ denote the scalar curvature of the  $G$-invariant  Riemannian metric $g_{G/K}$ on the base. Note that since $G/K$ is a homogeneous space, $-\infty<\kappa_{G/K}<0$ is a negative constant. Let $H\subseteq TM$ be an Ehresmann connection. Then as in the proof of Theorem \ref{thm:eq.Vilms} above, we may lift $g_{G/K}$ to a $G$-invariant metric $g_H$ on $H$, as well as lift the $K$-invariant Riemannian metric $g_N$ on $N$ to a metric on the vertical subbundle $V\subseteq TM$. Define a $G$-invariant metric on $M$ by $g_M := g_H\oplus g_V$. 
		
		Since $N$ is compact by hypothesis, its scalar curvature $\kappa_N$ satisfies $\inf\{\kappa_N\}=:\kappa_0>0$.
		Now let $T$ and $A$ denote the O'Neill tensors of the submersion $\pi$ (their definitions can be found in \cite{O'Neill}). By Theorem \ref{thm:eq.Vilms} above, the fibres of $M$ are totally geodesic, so $T=0$. Pick an orthonormal basis of horizontal vector fields $\{X_i\}$. By $G$-invariance, we have that for any point $p\in M$,
		$$\sum_{i,j} ||A_{X_i}(X_j)||_p = \sum_{i,j} ||A_{X_i}(X_j)||_{gp}$$
		for all group elements $g\in G$. This means 
		$\sup_{p\in X}\{\sum_{i,j} ||A_{X_i}(X_j)||_p\} =: A_0<\infty$, as $M/G$ is compact.
		Now by a result of Kramer (\cite{Kramer} p. 596), we can relate the scalar curvatures by
		$$
		\kappa_M(p) = \kappa_{G/K} + \kappa_N(p) - \sum_{i,j} ||A_{X_i}(X_j)||_p.
		$$
		Upon scaling the fibre metric on $N$ by a positive factor $t$, we obtain
		$$
		\kappa_M(p) \ge \kappa_{G/K} + t^{-2}\kappa_0 - A_0 >0 \qquad \text{whenever}\quad 0<t< \sqrt{\frac{\kappa_0}
			{-\kappa_{G/K} + A_0}}.
		$$
		Thus $g_M$ is a $G$-invariant metric of positive scalar curvature on $M$.
	\end{proof}
	
	This enables us to establish the following existence theorem for PSC metrics:
	\begin{theorem}\label{thm:PSCexistence}
		Let $G$ be an almost-connected Lie group acting properly and co-compactly on $M$ and let $K$ be a maximal compact subgroup of $G$ such that the identity component of $K$ is non-abelian. 
		If there is a global slice such that $K$ acts effectively on it,  then  $M$ has a $G$-invariant Riemannian metric of positive scalar curvature.
	\end{theorem}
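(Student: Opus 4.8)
The plan is to reduce the statement to the classical theorem of Lawson and Yau on non-abelian group actions and positive scalar curvature, and then feed the output into Theorem~\ref{thm:induction}. First I would invoke Abels' global slice theorem (Theorem~\ref{thm:abels}): the proper cocompact $G$-action on $M$ yields a $G$-equivariant diffeomorphism $M \cong G\times_K N$, where $K$ is a maximal compact subgroup of $G$ and $N$ is a \emph{compact} $K$-manifold. By Remark~\ref{rem:Slice.unique} the slice $N$ is unique up to $K$-diffeomorphism, so the hypothesis that $K$ acts effectively on some global slice is precisely the statement that $K$ acts effectively on $N$. Since the kernel of the $K$-action on $N$ is trivial, so is the kernel of the action of the identity component $K^0$; hence $K^0$ is a compact connected \emph{non-abelian} Lie group acting effectively on the closed manifold $N$.

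The core step is to produce a $K$-invariant Riemannian metric of positive scalar curvature on $N$. This is the content of the Lawson--Yau theorem~\cite{LawsonYau}: a closed manifold admitting a smooth effective action of a compact Lie group whose identity component is non-abelian carries an invariant metric of positive scalar curvature. The mechanism is to start from an arbitrary $K$-invariant metric $g_0$ on $N$ (average any metric over $K$) and to shrink it in the directions tangent to the orbits of a suitable non-abelian subgroup; to keep the deformed metric $K$-invariant I would use the semisimple part $H' := \widetilde{[K^0,K^0]}$, a nontrivial compact connected semisimple subgroup which is \emph{normal} in $K$ (its Lie algebra is $\Ad(K)$-invariant), so that the distribution tangent to the $H'$-orbits is $K$-invariant. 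The Lawson--Yau curvature estimate --- using that bi-invariant metrics on a compact semisimple group have positive scalar curvature that blows up under shrinking, together with the fact that the fixed-point set of $H'$ has codimension at least three --- then yields, for small shrinking parameter $t$, a $K$-invariant metric $g_t$ on $N$ of pointwise-positive scalar curvature.

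Finally I would invoke Theorem~\ref{thm:induction}: since $N$ is compact and carries a $K$-invariant metric of positive scalar curvature, $M \cong G\times_K N$ admits a $G$-invariant Riemannian metric of pointwise-positive scalar curvature, which is the claim. I expect the main obstacle to be the upgrade from $K^0$-invariance to genuine $K$-invariance: averaging metrics over the finite group $K/K^0$ does not preserve positivity of scalar curvature, so one cannot simply quote the connected case of Lawson--Yau and average. This is resolved either by appealing to the form of the Lawson--Yau theorem stated for compact groups with non-abelian identity component, or by running the shrinking argument along the orbits of the normal subgroup $H'$ as above; the one point that genuinely needs verification is that the Lawson--Yau singular-orbit estimates remain valid in this slightly more general setting.
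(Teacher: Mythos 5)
Your proposal is correct and follows essentially the same route as the paper: reduce via Abels' slice theorem and Theorem~\ref{thm:induction} to producing a $K$-invariant PSC metric on the compact slice $N$, and obtain that metric from the Lawson--Yau theorem. The paper simply cites Lawson--Yau in its form for compact groups with non-abelian identity component, which already dispels the $K/K^0$-averaging worry you raise; the extra elaboration of the orbit-shrinking mechanism, while accurate, is not needed.
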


	\begin{proof} 
		This theorem follows from Theorem \ref{thm:induction} and the following theorem of Lawson and Yau:
		
		\begin{theorem}[\cite{Lawson-Yau}]
			Let $K$ be a compact Lie group whose identity component is nonabelian. If $N$ is a compact manifold with an effective action of $K$,
			then $N$ admits a $K$-invariant Riemannian metric of positive scalar curvature.
		\end{theorem}
	\end{proof}
	
	\section{Two Applications of the Induction Principle}\label{sec:applications}

	The induction principle exhibited in the earlier parts of this paper allows us to generalise a few interesting results involving compact group actions. We owe inspiration to the paper of Hochs-Mathai \cite{HM16},
	where the theorem of Atiyah and Hirzebruch is generalised to the non-compact setting. 
	
	\subsection{Hattori's Vanishing Theorem}
	
	In 1978, Hattori (\cite{Hat}, Theorem 1 and Lemma 3.1) proved the following interesting result: 
	\begin{theorem}\label{thm:Hat0}
		Let $Y$ be a compact, connected, almost-complex manifold of dimension greater than $2$ on which $S^1$ 
		acts smoothly and non-trivially, preserving the almost-complex structure. Suppose that 
		the first Betti number of $Y$ vanishes and that the first Chern class is
		$$
		c_1(Y) = k_0 x, 
		$$
		where $k_0 \in \N$ and $x\in H^2(Y, \Z)$. Let $L$ be a line bundle with $c_1(L) = k x$, for an integer $k$ satisfying $|k|<k_0$ and $k=k_0$ (mod $2$). Then 
		$$
		\ind_{S^1}(\partial^L_Y) = 0 \quad \in R(S^1),
		$$
		where $\partial^L_Y$ is the equivariant $\Spinc$-Dirac operator on $Y$ twisted by $L$.
	\end{theorem}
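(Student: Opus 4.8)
The plan is to prove Theorem~\ref{thm:Hat0} by the equivariant localisation method, which is also the template for the non-compact generalisation below (and for the results of Atiyah--Hirzebruch~\cite{AH} and Hochs--Mathai~\cite{HM16} that it extends). Since $b_1(Y)=0$, the circle action lifts to $L$ and to the $\Spinc$-structure determined by the almost-complex structure, so $\partial^L_Y$ is a genuine $S^1$-equivariant elliptic operator and
\[
\chi(t):=\ind_{S^1}\!\bigl(\partial^L_Y\bigr)(t)\ \in\ R(S^1)=\mathbb{Z}[t,t^{-1}]
\]
is a Laurent polynomial; the goal is to show $\chi\equiv 0$. (One is free to replace the equivariant structure on $L$ by a character twist, which only multiplies $\chi$ by a power of $t$ and so does not affect the conclusion.) It is worth noting that, because $k\equiv k_0\pmod 2$, one may write $L\cong K_Y^{-1}\otimes E^{\otimes 2}$ with $c_1(E)=\tfrac{k-k_0}{2}x$, so that $\partial^L_Y$ is the Dolbeault--Dirac operator $\bar\partial_Y+\bar\partial_Y^{*}$ twisted by $E$; the hypothesis $|k|<k_0$ then says exactly that $\tfrac{k-k_0}{2}$ lies strictly between $-k_0$ and $0$, i.e.\ that $E$ sits in the ``Kodaira window'' $\bigl(c_1(K_Y),0\bigr)$ in which one expects a vanishing theorem.

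The first step is to apply the Atiyah--Bott--Segal--Singer fixed-point theorem, which expresses $\chi$ as a sum over the connected components $F$ of the fixed-point set $Y^{S^1}$,
\[
\chi(t)=\sum_{F\subseteq Y^{S^1}}\mu_F(t),\qquad
\mu_F(t)=\int_F\frac{\mathrm{Td}(TF)\,\ch^{S^1}(E)\big|_F}{\prod_{m\neq 0}\prod_i\bigl(1-t^{-m}e^{-x_{m,i}}\bigr)},
\]
where $N_F=\bigoplus_{m\neq 0}N_{F,m}$ is the decomposition of the normal bundle of $F$ into $S^1$-weight subbundles, the $x_{m,i}$ are the Chern roots of $N_{F,m}$, and each $\mu_F$ is a rational function of $t$ whose poles lie only at roots of unity. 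If $Y^{S^1}=\emptyset$ this already gives $\chi=0$, so assume not. The two facts I would extract from this formula are: (i) each $\mu_F$ is regular at $t=0$ and at $t=\infty$; and (ii) $\mu_F(t)\to 0$ as $t\to0$, for every $F$. Granting (i), the Laurent polynomial $\chi$ is bounded at $0$ and $\infty$, hence is a constant; granting (ii), that constant equals $\lim_{t\to0}\chi(t)=\sum_F 0=0$. This yields $\ind_{S^1}(\partial^L_Y)=0$.

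Both (i) and (ii) are proved by comparing the leading powers of $t$ in the numerator and denominator of $\mu_F$ at $t=0$ and $t=\infty$, and each of them reduces to an inequality, valid on every component $F$, between the $S^1$-weight of the twisting bundle $E$ along $F$ and suitable partial sums of the $S^1$-weights of the normal bundle $N_F$. Verifying these inequalities is the step I expect to be the main obstacle, and it is where the hypotheses $c_1(Y)=k_0x$, $c_1(E)=\tfrac{k-k_0}{2}x$ (equivalently $c_1(L)=kx$ with $|k|<k_0$) and $b_1(Y)=0$ all enter: restricting the relations $c_1(Y)=k_0x$ and $c_1(E)=\tfrac{k-k_0}{2}x$ $S^1$-equivariantly to each $F$ and comparing coefficients of the generator of $H^*_{S^1}(\mathrm{pt})$ shows that the weight of $E$ along $F$ is a fixed rational multiple --- of absolute value strictly less than $1$ --- of the weighted sum of normal weights along $F$, up to an additive constant independent of $F$ that can be normalised away using the freedom in the equivariant structure on $E$ and the splitting of $H^2_{S^1}(Y)$ provided by $b_1(Y)=0$. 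Because the $S^1$-action is non-trivial and $Y$ is connected, each $F$ is a proper submanifold --- the hypothesis $\dim Y>2$ ensuring that this is never vacuous --- with $N_F\neq 0$, so the partial sums in question are strictly positive and the required inequalities follow. Feeding them back into the fixed-point formula gives (i) and (ii) and completes the proof. (This weight computation is essentially Hattori's Lemma~3.1, so the statement of Theorem~\ref{thm:Hat0} may alternatively simply be quoted from~\cite{Hat}.)
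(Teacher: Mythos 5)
The paper does not prove Theorem~\ref{thm:Hat0} at all: it is stated verbatim as Hattori's result, with the attribution ``\cite{Hat}, Theorem~1 and Lemma~3.1,'' and no argument is supplied. Your proposal therefore takes a genuinely different route from the paper's, which is simply to cite. What you offer in exchange is a reconstruction of Hattori's own proof, and the outline is faithful to it: apply the Atiyah--Bott--Segal--Singer fixed-point formula to express $\chi(t)=\ind_{S^1}(\partial^L_Y)$ as a sum of rational contributions $\mu_F(t)$; use the $t\to 0$ and $t\to\infty$ asymptotics to force the Laurent polynomial $\chi$ to be constant and then zero; and trace the hypotheses to their roles ($b_1(Y)=0$ to lift the action to $L$ and control $H^2_{S^1}$, connectedness and $\dim Y>2$ to make each fixed-point component $F$ proper with $N_F\neq 0$, and the Kodaira-window bound $|k|<k_0$, i.e.\ $c_1(E)=\tfrac{k-k_0}{2}x\in(c_1(K_Y),0)$, to get the strict weight inequalities). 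This is exactly the Atiyah--Hirzebruch template that Hattori's theorem generalises, so the method matches the source even if not the paper.

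The one thing to flag is that your argument, as written, is a sketch rather than a proof: the step on which everything turns --- the inequality bounding the $S^1$-weight of $E$ along each $F$ by the appropriate partial sums of the normal weights, uniformly over all $F$, so that each $\mu_F(t)$ is regular at $0$ and $\infty$ and tends to zero at one end --- is asserted by appeal to Hattori's Lemma~3.1 rather than carried out. Your heuristic (restricting $c_1(Y)=k_0x$ and $c_1(E)=\tfrac{k-k_0}{2}x$ equivariantly to each $F$ and comparing coefficients in $H^*_{S^1}(\mathrm{pt})$) points in the right direction, but making it rigorous requires tracking the additive ambiguity in the equivariant lift and showing the ratio $|k/k_0|<1$ really does translate to a strict inequality for every fixed-point component and every relevant partial sum of weights; that is precisely the content of Hattori's lemma. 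Given the role this theorem plays in the paper (as input to the non-compact generalisation in Theorem~\ref{thm:H-M}), simply quoting Hattori --- as the authors do and as your closing parenthetical suggests --- is the appropriate level of detail; if you want your sketch to stand as a self-contained proof, the weight estimate must be proved, not cited.
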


	Hattori's result was inspired by the  vanishing theorem of Atiyah and Hirzebruch \cite{AH} for non-trivial circle actions on compact Spin-manifolds. 
	We first mildly generalise Theorem \ref{thm:Hat0} from non-trivial circle actions to non-trivial actions of compact, connected Lie groups.
	
	\begin{theorem}\label{thm:Hat}
		Let $Y$ be a compact, connected, almost-complex manifold of dimension greater than $2$ and $K$ a compact,
		connected Lie group acting smoothly and non-trivially on $Y$, preserving the almost-complex structure. Suppose also that 
		the first Betti number of $Y$ vanishes and that the first Chern class is
		$$
		c_1(Y) = k_0 x, 
		$$
		where $k_0 \in \N$ and $x\in H^2(Y, \Z)$. Let $L$ be a line bundle with $c_1(L) = k x$, for an integer $k$ satisfying $|k|<k_0$ and $k=k_0$ (mod $2$). Then 
		$$
		\ind_{K}(\partial^L_Y) = 0 \quad \in R(K),
		$$
		where $\partial^L_Y$ is the equivariant $\Spinc$-Dirac operator on $Y$ twisted by $L$.
	\end{theorem}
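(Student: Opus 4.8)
The plan is to deduce Theorem~\ref{thm:Hat} from Hattori's original statement (Theorem~\ref{thm:Hat0}) by restricting the $K$-action to a carefully chosen circle subgroup. The only non-formal input will be Theorem~\ref{thm:Hat0} itself, used as a black box; everything else is representation-theoretic bookkeeping on the maximal torus.

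First I would fix a maximal torus $T\subseteq K$. Since $K$ is compact and connected, the restriction homomorphism $\operatorname{res}^K_T\colon R(K)\to R(T)$ is injective, so it suffices to prove $\operatorname{res}^K_T\ind_K(\partial^L_Y)=\ind_T(\partial^L_Y)=0$ in $R(T)$; here $\partial^L_Y$ is viewed as a $T$-equivariant operator via the restricted action, which is legitimate because the $\Spinc$-Dirac operator is canonically attached to the ($K$-, hence $T$-) invariant almost-complex structure together with the line bundle $L$, and the equivariant index is natural under restriction to closed subgroups.

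Next I would expand $\ind_T(\partial^L_Y)=\sum_{\lambda\in S}a_\lambda e^\lambda$ as a finite $\Z$-combination of characters, with all $a_\lambda\neq 0$ and $S\subseteq\widehat T:=\operatorname{Hom}(T,\U(1))$ finite, and then choose a primitive cocharacter $v\colon\U(1)\to T$ with two properties: (i) the circle $S^1:=v(\U(1))$ acts \emph{non-trivially} on $Y$, and (ii) the integers $\langle\lambda,v\rangle$, $\lambda\in S$, are pairwise distinct. Such a $v$ exists: letting $K_0\trianglelefteq K$ denote the closed, proper kernel of the $K$-action, connectedness of $K$ forces every element to be conjugate into $T$, so $T\not\subseteq K_0$; hence the cocharacters landing in $K_0$ span only a proper $\R$-subspace of $\operatorname{Hom}(\U(1),T)\otimes\R$, and avoiding that subspace together with the finitely many hyperplanes $\{\langle\lambda-\mu,v\rangle=0\}$ for $\lambda\neq\mu$ in $S$ still leaves room for an integral (hence, after dividing by the gcd, primitive) cocharacter.

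Finally, for this $S^1$-action every hypothesis of Theorem~\ref{thm:Hat0} holds — $Y$ is compact, connected, of dimension $>2$, the action is smooth, non-trivial, and preserves the almost-complex structure, $b_1(Y)=0$, and the conditions $c_1(Y)=k_0x$, $c_1(L)=kx$, $|k|<k_0$, $k\equiv k_0\pmod 2$ are purely topological and do not see the group — so Theorem~\ref{thm:Hat0} gives $\ind_{S^1}(\partial^L_Y)=0$. On the other hand $\ind_{S^1}(\partial^L_Y)=\operatorname{res}^T_{S^1}\ind_T(\partial^L_Y)=\sum_{\lambda\in S}a_\lambda t^{\langle\lambda,v\rangle}$ in $R(S^1)=\Z[t,t^{-1}]$, and by (ii) these monomials are distinct, forcing every $a_\lambda=0$; thus $\ind_T(\partial^L_Y)=0$, and injectivity of $\operatorname{res}^K_T$ yields $\ind_K(\partial^L_Y)=0$. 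I expect the only delicate points to be the observation that $T\not\subseteq K_0$ (which is precisely what guarantees the existence of a non-trivially acting subcircle) and the weight-separation in step~(ii), which is exactly what allows Hattori's single-circle result to recover the full $T$-equivariant index.
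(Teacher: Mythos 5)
Your proof is correct, and it follows the same high-level strategy as the paper's: reduce from $K$ to a maximal torus $T$ (using injectivity of $R(K)\hookrightarrow R(T)$ and naturality of the equivariant index under restriction), then reduce to a circle subgroup where Hattori's Theorem~\ref{thm:Hat0} applies. Where the two arguments diverge is the step that recovers the full $T$-equivariant vanishing from the circle case. The paper picks many circles: it observes that the circles on which $K$ acts non-trivially sweep out a dense subset of $T$, applies Theorem~\ref{thm:Hat0} on each such circle, and concludes $\ind_T(\partial^L_Y)=0$ by continuity of characters (handling $T=S^1\times S^1$ first and then asserting an induction on $\dim T$). You instead pick a single circle: a primitive cocharacter $v$ avoiding both the proper subspace of cocharacters landing in the kernel $K_0$ of the action and the finitely many hyperplanes $\langle\lambda-\mu,v\rangle=0$, so the restriction $R(T)\to R(S^1)=\Z[t,t^{-1}]$ is injective on the finite weight-support of $\ind_T(\partial^L_Y)$, and Hattori's vanishing on that one circle forces every coefficient to be zero. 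Both arguments buy the same conclusion, but your weight-separation argument is tighter: it dispenses with the density-and-continuity step and with the induction on $\dim T$, and it also sidesteps an imprecision in the paper's claim that ``for an $l$-dimensional torus $T$ acting non-trivially on $Y$, no more than $l-1$ circles can act trivially'' (if $K_0\cap T$ has positive dimension, infinitely many circle subgroups act trivially; what is true, and what both arguments actually need, is that such circles are confined to a proper subtorus). Your verification that $T\not\subseteq K_0$ via normality of $K_0$ and conjugacy of elements into $T$ is exactly right.
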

	
	\begin{proof}
		For any $g\in K$, there is a maximal torus $T$ of $K$ containing $g$, whence
		$$
		\ind_K(\partial^L_Y)(g) = \ind_{T}(\partial^L_Y)(g).
		$$
		Since by assumption $K$ is connected and acts on $Y$ non-trivially, the action $T$ of on $Y$ given by restriction must also be non-trivial. In the special case $T = S^1\times S^1$, this means that at most two circles in $T$ can act trivially on $Y$. Thus the circles in $T$ that act non-trivially on $Y$ form a dense subset of $T$. For each $t$ in such a circle $S < T$, we have that
		$$
		\ind_T(\partial^L_Y)(t) = \ind_{S}(\partial^L_Y)(t) = 0
		$$
		by Hattori's Theorem \ref{thm:Hat0}. Since the character of a $K$-representation is continuous in $K$, $\ind_T(\partial^L_Y)(t) = 0$ for all $t\in T$. This completes the argument for $T = S^1\times S^1$. 
		
		The argument extends to the general case by induction on the dimension of $T$. One can show that for an $l$-dimensional torus $T$ acting non-trivially on $Y$, no more than $l-1$ circles can act trivially. From this it follows that, under the hypotheses of the theorem, $\ind_K(\partial^L_Y) = 0 \in R(K).$ 
	\end{proof}
	
	Our goal in the following subsection is to extend Theorem  \ref{thm:Hat} to the non-compact setting.
	The result is Theorem \ref{thm:H-M}, which can be stated equivalently in the form of Theorem \ref{thm:H-M2}. 
	Let $X$ be a 
	manifold on which a connected Lie group $G$ acts properly and isometrically. Suppose that the action is cocompact and that $X$ has a $G$-equivariant 
	$\Spin^c$-structure. Let 
	\[
	\ind_G(\partial_X^L)  \in K_{\bullet}(C^*_r(G))
	\]
	be the equivariant index of the associated $\Spinc$-Dirac operator. 
	
	Let $K<G$ be a maximal compact subgroup, and suppose $G/K$ has an almost-complex structure (this is always true for a double cover of $G$, as pointed out in Remark~\ref{rem:G/Kspinc}). 
	
	\begin{definition}[\cite{HM16}]
		The action of $G$ on $X$ is \emph{properly trivial} if all stabilisers are maximal compact subgroups of $G$. For a proper action, the stabilisers cannot be larger. The action is called \emph{properly non-trivial} if it is not properly trivial.
	\end{definition}

	Hattori's Theorem \ref{thm:Hat} generalises as follows.
	\begin{theorem}\label{thm:H-M}
		As above, let $G$ be a connected Lie group with a maximal compact subgroup $K$, and assume that $G/K$ has an almost-complex structure. Suppose that $G$ acts properly and cocompactly on a connected almost-complex manifold $X$ and that  $G$ preserves the almost-complex structure. Suppose also that 
		the first Betti number of $X$ vanishes and that the first Chern class is
		$$
		c_1(X) = k_0 x, 
		$$
		where $k_0 \in \N$ and $x\in H^2(X, \Z)$.  Assume that the $G$-action on $X$ is properly non-trivial. Let $L$ be a $G$-equivariant line bundle with $c_1(L) = k x$, for an integer $k$ satisfying $|k|<k_0$ and $k=k_0$ (mod $2$). Then 
		$$
		\ind_G(\partial_X^L) = 0 \quad \in K_\bullet(C^*_r(G)),
		$$
		where $\partial^L_X$ is the equivariant $\Spinc$-Dirac operator on $X$ twisted by $L$.
	\end{theorem}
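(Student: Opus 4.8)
The plan is to reduce the statement to Hattori's theorem on a compact slice via Abels' global slice theorem, and then to propagate the resulting vanishing from $K$-equivariant to $G$-equivariant $K$-theory using the compatibility of the higher index with induction proved in Theorem~\ref{thm:index.comm.induction}.

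First I would invoke Theorem~\ref{thm:abels} to fix a $G$-equivariant map $f\colon X \to G/K$ and write $X \cong G\times_K Y$ with $Y = f^{-1}(eK)$ a compact $K$-slice; since $G$ is connected, $K$ is connected. Writing $\kg = \kk \oplus \kp$, one has $TX|_Y \cong TY \oplus \underline{\kp}$ as $K$-vector bundles, where $\underline{\kp} = Y\times\kp$; the hypothesis that $G/K$ is almost-complex equips $\kp$ with a $K$-invariant complex structure, so that $\underline{\kp}$ is a trivial complex bundle over $Y$ and $c_1(TX|_Y) = c_1(TY)$ in $H^2(Y;\Z)$. I would use the $G$-invariant almost-complex structure on $X$, together with this splitting, to equip $Y$ with a $K$-invariant almost-complex structure (see the discussion of the main obstacle below); the $G$-equivariant line bundle $L$ is necessarily of the form $G\times_K(L|_Y)$ for its $K$-equivariant restriction $L|_Y$ to $Y$. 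Because the fibre bundle $X \to G/K$ has contractible base, $X$ is homotopy equivalent to $Y$, so $Y$ is connected with vanishing first Betti number, and under the induced isomorphism $H^2(X;\Z) \simeq H^2(Y;\Z)$ the class $x$ corresponds to a class $x'$ with $c_1(Y) = k_0 x'$ and $c_1(L|_Y) = k x'$. Finally, since the stabiliser of $[g,y] \in G\times_K Y$ is $gK_y g^{-1}$ with $K_y := \{k\in K : ky = y\} \subseteq K$, and $gK_yg^{-1}$ is a maximal compact subgroup of $G$ precisely when $K_y = K$, the assumption that the $G$-action on $X$ is properly non-trivial translates exactly into the $K$-action on $Y$ being non-trivial.

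Next I would apply Theorem~\ref{thm:Hat} to the compact connected group $K$ acting non-trivially on the compact connected almost-complex manifold $Y$, with $b_1(Y) = 0$, $c_1(Y) = k_0 x'$, and the line bundle $L|_Y$ satisfying $c_1(L|_Y) = k x'$, $|k| < k_0$, $k \equiv k_0 \pmod 2$, obtaining $\ind_K(\partial_Y^{L|_Y}) = 0 \in R(K)$. To transfer this, I would use the identification $\KInd_K^G[\partial_Y^{L|_Y}] = [\partial_X^L] \in K^G_\bullet(X)$ supplied by the explicit description of induction on analytic $K$-homology in Section~\ref{sec:Induction on analytic K-homology} (see \eqref{eq:Khomology.XY}, which also holds for twisted operators) --- equivalently, this is Corollary~\ref{prop quant ind} --- where the $G$-equivariant $\Spinc$-structure underlying $\partial_X^L$ corresponds to the $K$-$\Spinc$-structure underlying $\partial_Y^{L|_Y}$ in the sense of Remark~\ref{rem:Spinc.stru.ind}. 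Feeding this into the commutative diagram \eqref{eq:comm.index} of Theorem~\ref{thm:index.comm.induction} (the argument there applies equally when $G/K$ is merely $\Spinc$, as is the case here since $G/K$ is almost-complex) yields
\[
\ind_G(\partial_X^L) = \ind_G\!\big(\KInd_K^G[\partial_Y^{L|_Y}]\big) = \DInd_K^G\!\big(\ind_K(\partial_Y^{L|_Y})\big) = \DInd_K^G(0) = 0 \in K_\bullet(C^*_r(G)),
\]
which is the desired conclusion.

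The step I expect to be the main obstacle is the transport of geometric structure to the slice: showing carefully that the $G$-equivariant $\Spinc$-structure on $X$ underlying $\partial_X^L$ (the one coming from the almost-complex structure) destabilises, in the sense of Remark~\ref{rem:Spinc.stru.ind}, to a $K$-$\Spinc$-structure on $Y$ that is genuinely induced by a $K$-invariant almost-complex structure, so that $\partial_Y^{L|_Y}$ really is a twisted $\Spinc$-Dirac operator of the type handled by Theorem~\ref{thm:Hat}, and checking that all the numerical hypotheses (vanishing of $b_1$, the Chern-class identities, the connectedness and dimension conditions, and non-triviality of the action) descend along $X \cong G\times_K Y$. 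The point here is that the almost-complex structure on $G/K$ lets one peel off the normal $\kp$-directions $K$-equivariantly as a complex subbundle --- if necessary after a $G$-equivariant homotopy of the almost-complex structure on $X$, which leaves $[\partial_X^L]$ unchanged --- and the bookkeeping around Remarks~\ref{rem:Spinc.stru.ind} and~\ref{rem:G/Kspinc} is where the care is needed. Once this is in place, the rest of the argument is a purely formal consequence of the induction machinery already established.
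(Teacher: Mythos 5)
Your proposal follows the same strategy as the paper's proof: reduce to the compact slice $Y$ via Abels' theorem, transport the almost-complex structure and topological hypotheses to $Y$, apply Hattori's Theorem~\ref{thm:Hat} there, and then use Corollary~\ref{prop quant ind} (equivalently, the commutative diagram of Theorem~\ref{thm:index.comm.induction}) to conclude that $\ind_G(\partial_X^L)$ vanishes. The small differences are cosmetic and, if anything, in your favour: you verify directly that the stabiliser of $[g,y]$ is $gK_yg^{-1}$ to translate proper non-triviality, where the paper cites Lemma 9 of \cite{HM16}; you explicitly observe that Section~4's induction machinery (stated under the hypothesis $G/K$ Spin) carries over to the $\Spinc$ case needed here; and you flag --- correctly --- that the $G$-invariant $J$ on $TX$ need not respect the splitting $TY\oplus\underline{\kp}$ a priori, and handle this by a $G$-equivariant homotopy of almost-complex structures, a subtlety the paper passes over silently.
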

	
	Theorem~\ref{thm:index.comm.induction} (\emph{higher index commutes with induction})
	allows us to deduce Theorem~\ref{thm:H-M}
	from Hattori's Theorem \ref{thm:Hat}. 
	This is based on the fact that the Dirac induction map \eqref{eq:indG/K1} relates the equivariant indices of the $\Spinc$-Dirac operators $\partial_Y^L$ on $Y$ and $\partial_X^L$ on $X$, associated to the $\Spinc$-structures $P_Y$ and $P_X$ respectively, to one another. (See Corollary~\ref{prop quant ind}, also~{\cite[Theorem 5.7]{HochsMathaiAJM}}.)
	\medskip
	
	\noindent \emph{Proof of Theorem \ref{thm:H-M}.}
	Let $Y\subset X$ be as in Abels' Theorem (Theorem~\ref{thm:abels}). 
	Using the decomposition~(\ref{eq:decom.tangent}) of the tangent bundle of $X$, the almost-complex structures on $X$ and on $G/K$ (hence on $\mathfrak p$) give rise to an almost-complex structure on $G\times_K TY$. Since $G$ preserves the almost-complex structure, we obtain an almost-complex structure on $Y$ preserved by the $K$-action. 
	Note that a manifold with an almost-complex structure preserved under a group action has an equivariant $\Spinc$-structure.
	By Corollary~\ref{prop quant ind}, we have
	\begin{equation} \label{eq quant ind pf}
	\ind_G(\partial_X^L) = \DInd_K^G\bigl( \ind_K(\partial_Y^{L|_Y})\bigr).
	\end{equation}
	Let $X_{(K)}$ be the set of points in $X$ with stabilisers conjugate to $K$.
	By Lemma~9 of~\cite{HM16}, the fixed point set $Y^K$ of the action by $K$ on $Y$ is related to the action of $G$ on $X$ by
	\[
	X_{(K)} = G \cdot Y^K \cong G/K \times Y^K.
	\]
	The stabiliser of a point $m \in X$ is a maximal compact subgroup of $G$ if and only if $m \in X_{(K)}$. 
	Thus, the condition on the stabilisers of the action of $G$ on $X$ is equivalent to the requirement that the action of $K$ on $Y$ be non-trivial. 
	Moreover, because $G/K$ is contractible, we have 
	$$H^{*}(G\times_K Y, \Z)\simeq H^*(Y,\mathbb{Z}),$$ 
	while $T(G/K)$ is trivial. Hence, $X=G\times_K Y$ and $Y$ have the same first Betti number. In a similar fashion, we have 
	$$c_1(G\times_K TY)=c_1(TY),\qquad c_1(L)=c_1(G\times_K L|_{Y})=c_1(L|_Y).$$ 
	Theorem~\ref{thm:Hat} now implies that, under our hypotheses,
	\[
	\ind_K(\partial_Y^{L|_Y}) = 0.
	\]
	The theorem then follows from relation (\ref{eq quant ind pf}). 
	\hfill $\square$
	\medskip
	%
	%
	
	The Theorem we have just proved has the following equivalent statement.
	\begin{theorem}\label{thm:H-M2}
		In the setting of Theorem \ref{thm:H-M}, $\ind_G(\partial^L_X) \not= 0$ if and only if there is a compact $\Spin^c$-manifold $Y$ with $e^{kx/2} \hat A(Y) \not=0$, and a $G$-equivariant diffeomorphism
		\[
		X \simeq G/K \times Y,
		\]
		where $G$ acts trivially on $Y$.
	\end{theorem}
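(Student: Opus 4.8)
The plan is to read both implications of Theorem~\ref{thm:H-M2} off Theorem~\ref{thm:H-M} and its proof, combined with the fact that Dirac induction $\DInd_K^G\colon R(K)\to K_\bullet(C^*_r(G))$ is an isomorphism (the Connes--Kasparov conjecture, \cite{CEN}). Throughout I would fix a $K$-slice $Y\subset X$ as in Abels' theorem (Theorem~\ref{thm:abels}), so that $X\cong G\times_K Y$, and use the relation \eqref{eq quant ind pf} established in the proof of Theorem~\ref{thm:H-M}, namely $\ind_G(\partial_X^L)=\DInd_K^G(\ind_K(\partial_Y^{L|_Y}))$, which holds because the induced $\Spinc$-structure and line bundle on $X$ restrict to the corresponding data on $Y$ (Remark~\ref{rem:Spinc.stru.ind}, Corollary~\ref{prop quant ind}). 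Here $c_1(L|_Y)=kx$, with $x\in H^2(Y,\Z)$ the class matching $x\in H^2(X,\Z)$ under the contractibility of $G/K$, and $\partial_Y^{L|_Y}$ has numerical index $\langle e^{kx/2}\hat{A}(Y),[Y]\rangle$ by the Atiyah--Singer index theorem \cite{AS}.

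\emph{Sufficiency.} Suppose $X\simeq G/K\times Y$ $G$-equivariantly with $G$ acting trivially on the compact $\Spinc$-manifold $Y$. Then $Y$ is itself the $K$-slice, carrying the trivial $K$-action, so (as $K$ and $Y$ are connected) $\ind_K(\partial_Y^{L|_Y})$ equals $n\cdot[\chi]$ in $R(K)$ for some character $\chi$ of $K$, where $n=\langle e^{kx/2}\hat{A}(Y),[Y]\rangle$ is the numerical index. By \eqref{eq quant ind pf}, $\ind_G(\partial_X^L)=n\cdot\DInd_K^G([\chi])$; since $\DInd_K^G$ is injective and $[\chi]\neq 0$ in $R(K)$, this is nonzero precisely when $n\neq 0$, that is, when $e^{kx/2}\hat{A}(Y)\neq 0$.

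\emph{Necessity.} Conversely, suppose $\ind_G(\partial_X^L)\neq 0$. By the contrapositive of Theorem~\ref{thm:H-M}, the $G$-action on $X$ is not properly non-trivial, hence properly trivial: every stabiliser is a maximal compact subgroup of $G$. As in the proof of Theorem~\ref{thm:H-M}, the set $X_{(K)}$ of points with stabiliser conjugate to $K$ satisfies $X_{(K)}=G\cdot Y^K\cong G/K\times Y^K$ by \cite[Lemma~9]{HM16}; under $X\cong G\times_K Y$, the point $[(g,y)]$ has stabiliser conjugate to $\Stab_K(y)$, which equals $K$ exactly when $y\in Y^K$, so $X=X_{(K)}$ forces $Y=Y^K$, i.e.\ $K$ acts trivially on $Y$. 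Hence $X\cong G\times_K Y\cong G/K\times Y$ with $G$ acting only on the $G/K$ factor, and $Y$ is compact (Abels' slice of a cocompact action) and $\Spinc$ (Remark~\ref{rem:Spinc.stru.ind}). Applying the computation of the previous paragraph, \eqref{eq quant ind pf} together with injectivity of $\DInd_K^G$ yields $\langle e^{kx/2}\hat{A}(Y),[Y]\rangle\neq 0$, hence $e^{kx/2}\hat{A}(Y)\neq 0$, as desired.

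I expect the only step requiring real care to be the implication ``properly trivial $\Rightarrow$ $K$ acts trivially on the slice $Y$'', which relies on the slice decomposition $X_{(K)}\cong G/K\times Y^K$ of \cite{HM16} and the essential uniqueness of the slice (Remark~\ref{rem:Slice.unique}); one should also verify, using the contractibility of $G/K$, that the $\Spinc$-structure, the degree-two class $x$, and $c_1(L)$ transfer correctly between $X$ and $Y$ (so that Hattori's normalisation $e^{kx/2}\hat{A}$ is the appropriate one on $Y$). Granting Theorem~\ref{thm:index.comm.induction}, Corollary~\ref{prop quant ind}, and the Connes--Kasparov isomorphism, the remaining steps --- reducing to the slice via \eqref{eq quant ind pf} and observing that $\DInd_K^G$ is injective on $R(K)$ --- are purely formal.
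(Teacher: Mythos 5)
Your proposal is correct and follows essentially the same route as the paper: reduce to the slice $Y$ via the relation $\ind_G(\partial_X^L)=\DInd_K^G(\ind_K(\partial_Y^{L|_Y}))$ together with the fact that $\DInd_K^G$ is an isomorphism, identify triviality of the $K$-action on $Y$ with the product decomposition $X\cong G/K\times Y$, and then apply Hattori's theorem on the compact slice. The one place your proof differs cosmetically from the paper's is that in the necessity direction you pass through the contrapositive of Theorem~\ref{thm:H-M} and the $X_{(K)}$ decomposition of \cite{HM16} to deduce that $K$ acts trivially on $Y$, whereas the paper applies Theorem~\ref{thm:Hat} directly on the slice to get the equivalence $\ind_K(\partial_Y^{L|_Y})\neq 0 \iff K\text{ acts trivially on }Y\text{ and }e^{kx/2}\hat A(Y)\neq 0$; both amount to the same use of Hattori's vanishing. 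Your added observation that, with $K$ acting trivially on $Y$, the equivariant index is $n\cdot[\chi]$ for a character $\chi$ of $K$ (rather than automatically $n$ copies of the trivial representation) is a careful point; it does not affect the conclusion because non-vanishing of $n\cdot[\chi]$ is still equivalent to $n\neq 0$, and $\DInd_K^G$ is injective.
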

	
	\begin{proof}
		Because the Dirac induction map $\DInd_K^G$  in \eqref{eq quant ind pf} is an isomorphism,
		\[
		\ind_G(\partial_X^L) \not=0 \quad \Leftrightarrow \quad \ind_K(\partial_Y^{L|_Y}) \not=0. 
		\]
		Moreover, we have an equivalence
		\[
		\ind_K(\partial_Y^{L|_Y}) \not=0 \quad \Leftrightarrow \quad \text{$K$ acts trivially on $Y$ and $e^{kx}\hat A(Y)\not=0$},
		\]
		which follows from Theorem  \ref{thm:Hat}, since if $K$ acts trivially on $Y$, then 
		\mbox{$\ind_K(\partial_Y^{L|_Y})\in R(K)$} equals $\ind(\partial_Y^{L|_Y}) =e^{kx} \hat A(Y)$ copies of the trivial representation. Since $K$ acts trivially on $Y$ if and only if $X = (G/K) \times Y$, the claim follows.
	\end{proof}
	
	\begin{remark}
		The proofs of Theorems \ref{thm:H-M} and \ref{thm:H-M2} are inspired by the proofs of Theorems~2 and 3 in~\cite{HM16}. 
		Moreover (see also Remark~7 in \cite{HM16}), the non-vanishing of $\ind_G(\partial_X^L)$ in Theorems \ref{thm:H-M} and \ref{thm:H-M2} can be replaced by the non-vanishing of the class $p_*[\partial_X^L]$ in~(\ref{eq:ind.X.G/K}) because of the Baum-Connes isomorphism~ (\ref{eq:iso.G/K.BC}).
	\end{remark}

	
	\subsection{An Analogue of Petrie's Conjecture}
	
	The Pontryagin class of a closed oriented manifold is not usually a homotopy invariant. However, for the K\"ahler manifolds $\C P^n$, Petrie~\cite{Petrie} has an interesting conjecture, motivated by the question of whether or not manifolds of a given homotopy type admit non-trivial circle actions. 
	Recall that the total Pontryagin class of $\C P^n$ is 
	\[
	p(\C P^n)=(1+x^2)^{n+1}, \qquad x\in H^2(\C P^n).
	\] 
	There is a natural action of $S^1$ on $\C P^n$ given by 
	\[
	(\lambda,[z_0:\ldots:z_n])\mapsto [\lambda^{a_0} z_0:\ldots:\lambda^{a_n}z_n],\qquad  a_i\in\mathbb{Z}.
	\]
	
	\begin{conjecture}[Petrie's Conjecture~\cite{Petrie}]
		\label{conj:Petrie}
		If a closed oriented manifold $Y$ of dimension $2n$ admits an orientation-preserving homotopy
		equivalence $f: Y\rightarrow \C P^n$ and also a non-trivial circle action, then its total Pontryagin class is given by
		\[
		p(Y)=f^*p(\C P^n).
		\]
	\end{conjecture}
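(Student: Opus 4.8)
The plan is to turn the conjecture into a finite linear-algebra problem about the Pontryagin numbers of $Y$ and to solve it using the vanishing of twisted $\Spin^c$-Dirac indices furnished by Hattori's theorem. Concretely, I expect this approach to establish the conjecture in the form relevant to this paper --- for $Y$ (respectively a $G$-cocompact $X$ with compact slice $Y$) carrying a $K$-invariant almost-complex structure with $b_1(Y)=0$ and $c_1(Y)=k_0x$, and a properly non-trivial action. First I would record the consequences of $f\colon Y\to\C P^n$: it induces a ring isomorphism $H^*(Y;\Z)\cong\Z[x]/(x^{n+1})$ with $x=f^*(\text{generator})$, so $b_1(Y)=0$ holds automatically, every Pontryagin class of $Y$ has the shape $p_i(Y)=\lambda_ix^{2i}$ with $\lambda_i\in\Z$, and $p_i(Y)=0$ for $4i>2n$. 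Since the truncation of $(1+x^2)^{n+1}$ in $\Z[x]/(x^{n+1})$ is $\sum_{i\le\lfloor n/2\rfloor}\binom{n+1}{i}x^{2i}$, the conjecture is \emph{equivalent} to the assertion $\lambda_i=\binom{n+1}{i}$ for $1\le i\le\lfloor n/2\rfloor$; everything beyond this range is vacuous.

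Next, for each integer $k\equiv k_0\pmod 2$ I would form the $\Spin^c$-structure obtained by twisting the canonical one of the almost-complex structure by a line bundle with first Chern class $\tfrac12(k-k_0)x$, so that its determinant line bundle has first Chern class $kx$ and the associated twisted $\Spin^c$-Dirac operator $\partial_Y^{(k)}$ has ordinary index $P(k):=\langle e^{kx/2}\hat A(Y),[Y]\rangle$. Because the circle (more generally a maximal torus, via Theorem~\ref{thm:Hat}) acts non-trivially and $b_1(Y)=0$, Theorem~\ref{thm:Hat0} forces $\ind_{S^1}(\partial_Y^{(k)})=0$, hence $P(k)=0$, for every such $k$ with $|k|<k_0$. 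When $k_0=n+1$ this is \emph{exactly} $n$ values of $k$.

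The conclusion is then a one-line polynomial argument. Extracting the degree-$2n$ part of $e^{kx/2}\hat A(Y)$ shows $P(k)$ is a polynomial in $k$ of degree exactly $n$, with leading coefficient $\tfrac{1}{2^nn!}\langle x^n,[Y]\rangle=\tfrac{1}{2^nn!}$, whose coefficients are universal polynomials in $\lambda_1,\dots,\lambda_{\lfloor n/2\rfloor}$ (the coefficient of $k^{n-2j}$ involving $\lambda_j$ linearly through a nonzero constant and not involving any $\lambda_{j'}$ with $j'>j$). The same formulas compute $P_{\C P^n}(k)=\langle e^{kx/2}\hat A(\C P^n),[\C P^n]\rangle$, which by Theorem~\ref{thm:Hat0} applied to the standard circle action vanishes at the same $n$ points and has the same leading coefficient. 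Hence $P-P_{\C P^n}$ has degree at most $n-1$ and $n$ distinct roots, so $P=P_{\C P^n}$; comparing coefficients and inverting the triangular system yields $\lambda_i=\binom{n+1}{i}$, i.e. the conjecture. To obtain the non-compact statement I would run this on the compact slice $Y\subset X$ of Abels' theorem (Theorem~\ref{thm:abels}) --- noting that, as in the proof of Theorem~\ref{thm:H-M}, a $G$-invariant almost-complex structure on $X$ with the required Chern data descends to a $K$-invariant one on $Y$ with the same data, and the action is properly non-trivial on $X$ iff $K$ acts non-trivially on $Y$ --- and then transport the vanishing upstairs using Theorem~\ref{thm:index.comm.induction}, since $\DInd_K^G$ is an isomorphism intertwining $\ind_K(\partial_Y^{(k)})$ with $\ind_G(\partial_X^{(k)})$.

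The main obstacle I anticipate is not any single calculation but the matching of hypotheses: the $\hat A$-genus argument produces exactly the right number of equations only when $c_1(Y)=(n+1)x$ (i.e. $k_0=n+1$), and it requires a $G$- (or $K$-) invariant almost-complex or $\Spin^c$-structure with prescribed determinant line bundle, so some care is needed to confirm these structures exist equivariantly for every admissible $k$ and that the parity condition $k\equiv k_0\pmod2$ is precisely what makes the twisting line bundle integral. Going beyond the almost-complex / $\Spin^c$-rigidity regime to the full topological Petrie conjecture is out of reach of this circle of ideas.
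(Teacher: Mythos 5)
The statement you were given is \emph{Conjecture}~\ref{conj:Petrie}, and the paper supplies no proof of it: Petrie's conjecture is open, and Section~7.2 simply states it, cites Hattori~\cite{Hat} for the partial result under the additional hypotheses that $Y$ carries an $S^1$-invariant almost-complex structure with $c_1(Y)=\pm(n+1)x$, and then uses that cited partial result (not the conjecture itself) in the proof of Theorem~\ref{thm:petrie}. What you have written is a plausible reconstruction of Hattori's proof of that \emph{partial result}: the reduction of the conjecture to $\lambda_i=\binom{n+1}{i}$, the identification of $P(k)=\langle e^{kx/2}\hat A(Y),[Y]\rangle$ with a twisted $\Spinc$-index governed by Theorem~\ref{thm:Hat0}, the count of $n$ forced vanishing points when $k_0=n+1$, the computation of the leading coefficient $\tfrac{1}{2^n n!}$ via $\langle x^n,[Y]\rangle=1$, and the triangular extraction of the $\lambda_i$ are all sound (one should additionally confirm that the twisting line bundles admit $S^1$-equivariant lifts, which is standard here since $H^1(Y)=0$ and $H^2(Y)\cong\Z$). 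But this proves Hattori's theorem, not the statement you were asked about.

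The genuine gap is therefore scope, and you have already named it yourself: your argument (like Hattori's) needs the almost-complex structure, the $S^1$-invariance of that structure, and $c_1(Y)=\pm(n+1)x$ to generate exactly $n$ vanishing conditions; none of these is assumed in Conjecture~\ref{conj:Petrie}, and the $\Spinc$-rigidity machinery offers no way to remove them. Since the paper never claims to resolve the conjecture, there is no ``paper's own proof'' to compare against. If you want to match what the paper actually proves at this point, the target should be Lemma~\ref{lem:h.e.CPn} (the $K$-homology reformulation of $\cL(Y)=f^*\cL(\C P^n)$) and Theorem~\ref{thm:petrie} (its transport to $SU(1,1)\times_{U(1)}\C P^n$ via induction on $K$-homology); your closing paragraph points in that direction but does not carry it out, and notably the paper's Theorem~\ref{thm:petrie} goes through the signature operator and the Chern-character diagram rather than re-running the $\hat A$-polynomial argument on the slice.
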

	
	It is a theorem of Hattori~\cite{Hat} that this conjecture holds if $Y$ has an almost-complex structure preserved under the $S^1$-action and if $c_1(Y)=\pm(n+1)x$, where $x\in H^2(Y,\Z)$ is the generator. 
	
	Recall that the $\cL$-class is given by the multiplicative sequence of polynomials belonging to the power series $\frac{\sqrt{t}}{\tanh\sqrt{t}}$ and that the signature of a smooth compact oriented manifold $Y^{4k}$ is equal to the $\cL$-genus $\cL[Y^{4k}].$ We have the following lemma, inspired by a result of Kaminker ~\cite{Kaminker}: 
	
	\begin{lemma}
		\label{lem:h.e.CPn}
		Let $Y$ be a compact $\Spinc$-manifold.
		Let $D_Y, D_{\C P^n}$ denote signature operators on $Y$ and $\C P^n$ respectively. Then
		$\cL(Y)=f^*\cL(\C P^n)$ if and only if
		\[ 
		\quad f_* [D_Y]=[D_{\C P^n}] \in K^{geo}_{\bullet}(\C P^n).
		\]
	\end{lemma}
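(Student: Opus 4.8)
The plan is to reduce the statement to the rational-homotopy invariance of the $\mathcal{L}$-class detected through $K$-homology, exploiting that $f$ is a homotopy equivalence and that $\mathbb{C}P^n$ (and $Y$, being $\Spinc$) is rationally a point's worth of data in $K$-homology once we tensor with $\mathbb{Q}$. First I would recall that the $K$-homology class of the signature operator $[D_Y]\in K_{\bullet}(Y)$ is, rationally, Poincar\'e dual to the total $\mathcal{L}$-class: under the (non-equivariant) Poincar\'e duality of Section~\ref{sec:PD} applied with $G$ trivial, $[D_Y] = \mathcal{P}\mathcal{D}_Y(\mathcal{L}(Y)\cap[Y])$ up to the usual normalisation, so that the Chern character of $[D_Y]$ in $H_{\bullet}(Y;\mathbb{Q})$ equals $\mathcal{L}(Y)\cap[Y]$ (this is the content of the index theorem for the signature operator; see e.g. the identification used implicitly in~\cite{Kasparov} and~\cite{Higson-Roe}). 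Since $\mathbb{C}P^n$ has torsion-free homology and $K_{\bullet}(\mathbb{C}P^n)$ is torsion-free, the Chern character $K_{\bullet}(\mathbb{C}P^n)\to H_{\bullet}(\mathbb{C}P^n;\mathbb{Q})$ is injective; pulling back along the homotopy equivalence $f$, the same holds for $Y$ after inverting the (finitely many) torsion primes, and in fact one checks $f_*\colon K_{\bullet}(Y)\to K_{\bullet}(\mathbb{C}P^n)$ becomes an isomorphism rationally.

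The argument then splits into the two implications. For the ``only if'' direction, assume $\mathcal{L}(Y) = f^*\mathcal{L}(\mathbb{C}P^n)$. Applying $f_*$ and using naturality of the cap product together with $f_*[Y] = [\mathbb{C}P^n]$ (as $f$ is an orientation-preserving homotopy equivalence), I get $f_*(\mathcal{L}(Y)\cap[Y]) = f_*(f^*\mathcal{L}(\mathbb{C}P^n)\cap[Y]) = \mathcal{L}(\mathbb{C}P^n)\cap f_*[Y] = \mathcal{L}(\mathbb{C}P^n)\cap[\mathbb{C}P^n]$. Transporting through the Chern character isomorphism and the index theorem, this says $\mathrm{ch}(f_*[D_Y]) = \mathrm{ch}([D_{\mathbb{C}P^n}])$ in $H_{\bullet}(\mathbb{C}P^n;\mathbb{Q})$; since $\mathrm{ch}$ is injective on $K_{\bullet}(\mathbb{C}P^n)$ (torsion-free) we conclude $f_*[D_Y] = [D_{\mathbb{C}P^n}]$ in $K_{\bullet}(\mathbb{C}P^n)$, and by Theorem~\ref{thm:eq.K-homology} (equivalence of geometric and analytic $K$-homology, with $G$ trivial) the same equality holds in $K^{\mathrm{geo}}_{\bullet}(\mathbb{C}P^n)$. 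For the ``if'' direction I run this backwards: from $f_*[D_Y] = [D_{\mathbb{C}P^n}]$ in $K^{\mathrm{geo}}_{\bullet}(\mathbb{C}P^n)\cong K_{\bullet}(\mathbb{C}P^n)$, apply $\mathrm{ch}$ and the index theorem to get $f_*(\mathcal{L}(Y)\cap[Y]) = \mathcal{L}(\mathbb{C}P^n)\cap[\mathbb{C}P^n]$ in $H_{\bullet}(\mathbb{C}P^n;\mathbb{Q})$; then cap with $f^*$ of the dual basis and use that $f^*\colon H^{\bullet}(\mathbb{C}P^n;\mathbb{Q})\to H^{\bullet}(Y;\mathbb{Q})$ is an isomorphism (homotopy equivalence) compatible with Poincar\'e duality, to recover $\mathcal{L}(Y) = f^*\mathcal{L}(\mathbb{C}P^n)$ in $H^{\bullet}(Y;\mathbb{Q})$. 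Since $H^{\bullet}(Y;\mathbb{Z})\cong H^{\bullet}(\mathbb{C}P^n;\mathbb{Z})$ is torsion-free, the rational equality is an integral one.

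The main obstacle I anticipate is bookkeeping around torsion and the precise normalisation of the Poincar\'e-duality/Chern-character identification: one must make sure that $\mathrm{ch}$ really is injective on the relevant $K$-homology groups (true here because $\mathbb{C}P^n$, hence $Y$ up to homotopy, has torsion-free homology and a cell structure with only even cells, so there are no lim$^1$ or extension problems) and that the ``$\mathcal{L}$-class $\leftrightarrow$ signature operator'' dictionary is applied with consistent conventions on both manifolds so that the twisting/normalising constants cancel. A secondary point is that $Y$ is only assumed $\Spinc$ rather than oriented-with-a-chosen-structure, but this is exactly what guarantees $[D_Y]$ lives in $K^{\mathrm{geo}}_{\bullet}(Y)$ unambiguously via the recipe of Section~\ref{sec:equivalence}, and the signature operator's $K$-homology class is in any case independent of auxiliary choices. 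Everything else is naturality of $f_*$, $f^*$, cap products, and Chern characters, which I would state without belaboring the computations.
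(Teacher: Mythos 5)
Your argument is essentially the same as the paper's: both prove the equivalence by running the $\mathcal{L}$-class through the commutative square that relates $K$-theory and $K$-homology Poincar\'e duality via the (co)homological Chern character, identifying $\mathrm{ch}[\sigma(D_Y)]\cup\mathrm{td}(Y)=\mathcal{L}(Y)$, and then appealing to naturality of $\mathrm{ch}$ and $f_*$. The one point where you go beyond the paper is in explicitly flagging (and justifying by torsion-freeness of $K_\bullet(\mathbb{C}P^n)\cong K_\bullet(Y)$) the injectivity of the homological Chern character needed for the ``only if'' direction --- the paper silently labels that arrow an isomorphism in its diagram without comment, so your extra care here is a genuine and welcome clarification rather than a different route.
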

	
	\begin{proof}
		Assume without loss of generality that $\dim Y$ is even. Poincar\'{e} duality for geometric $K$-homology is given by taking cap product with the fundamental class $[Y]$ (cf. Example~\ref{ex:fundamental.class} and Corollary~\ref{cor:spinc}):
		\[
		\mathcal P\mathcal D_K: K^0(Y)\rightarrow K_0^{geo}(Y)\quad\quad [E]\rightarrow [Y]\cap [E]:=[(Y,E\otimes\C_Y,\id_Y)].
		\]
		Recall that there is a Chern character for K-homology,
		\[
		\ch: K_0(Y)\rightarrow H_{\mathrm{ev}}(Y)\otimes\Q\quad\quad [(Y,E,f)]\mapsto f_*(\mathcal P\mathcal D_H(\ch(E)\cup\mathrm{td}(Y))),
		\]
		that makes the following diagram commutative:
		\begin{equation}
		\label{eq:comm.K.H}
		\begin{CD}
		K^0(Y) @> \ch(-)\cup\mathrm{td}(Y)>\simeq > H^{\mathrm{ev}}(Y)\otimes\Q \\
		@V \mathcal P\mathcal D_K V\simeq V  @V\simeq V \mathcal P\mathcal D_H V \\
		K_0(Y)   @>>\mathrm{ch(-)}> H_{\mathrm{ev}}(Y)\otimes\Q.
		\end{CD}
		\end{equation}
		In this diagram, $\ch(-)\cup\mathrm{td}(Y)$ is the Atiyah-Singer integrand and $\mathcal P\mathcal D_H$ is the Poincar\'e duality map on homology and cohomology.
		Note that if $D_Y$ is the signature operator on $Y$, then $\ch[\sigma(D_Y)]\cup\mathrm{td}(Y)=\mathcal{L}(Y)$. 
		Together with the commutative diagram~(\ref{eq:comm.K.H}) we have 
		\[
		\mathcal P\mathcal D_H(\cL(Y)) = \cL(Y) \cap [Y]=\ch(\mathcal P\mathcal D_K([\sigma(D_Y)])).
		\]
		The proof is then completed by naturality of the Chern character (note that $\cL(Y)=f^*\cL(\C P^n)$ if and only if the intersection products are equal, if and only if
		$f_*(\cL(Y) \cap [Y])=\cL(\C P^n) \cap [\C P^n]$).
	\end{proof}
	
	Using induction on $K$-homology, we are able to generalise a weaker version of Hattori's result on Petrie's Conjecture, as follows.
	
	\begin{theorem}[Analogue of Petrie's Conjecture]
		\label{thm:petrie}
		Let $X$ be a connected manifold admitting an almost-complex structure and a properly non-trivial $SU(1, 1)$-action preserving the almost-complex structure.
		If there is an $SU(1, 1)$-equivariant homotopy equivalence 
		\[
		f: X\rightarrow SU(1,1)\times_{U(1)}\C P^n,
		\]
		and $c_1(X)=\pm(n+1)x$ with $x\in H^2(X,\mathbb{Z})$ being the generator,
		then
		\[
		f_*[D_X]=[D_{SU(1, 1)\times_{U(1)}\C P^n}]\in K^{SU(1,1)}_{\bullet}(SU(1,1)\times_{U(1)}\C P^n) \simeq K^{U(1)}_{\bullet}(\C P^n).
		\]
	\end{theorem}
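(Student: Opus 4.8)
The plan is to use Abels' slice theorem to reduce the assertion to its counterpart on a $U(1)$-slice, where it becomes (essentially) Hattori's theorem on Petrie's conjecture, and then to transport the conclusion back along the induction isomorphism on $K$-homology.

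Write $G = SU(1,1)$, $K = U(1)$. Then $G$ is connected (hence almost-connected), $K$ is a maximal compact subgroup, and $G/K$ is the Poincar\'e disk: it is contractible, hence $\Spinc$, and carries a $G$-invariant almost-complex structure (compare Remark~\ref{rem:G/Kspinc}), with $d = \dim G/K = 2$. By Abels' theorem (Theorem~\ref{thm:abels}) we may write $X \cong G\times_K Y$ for a compact $K$-manifold $Y$, and the isomorphism $K^{G}_\bullet(G\times_K\C P^n) \simeq K^{U(1)}_\bullet(\C P^n)$ in the statement is (the inverse of) the induction isomorphism $\KInd_K^G$ of Section~\ref{sec:Induction on analytic K-homology}. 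Since $G/K$ is a classifying space for proper $G$-actions, the composite of $f$ (and of a chosen $G$-homotopy inverse of $f$) with the canonical projection to $G/K$ is $G$-homotopic to that projection, so after a $G$-homotopy $f$ preserves the fibres over $eK$ and restricts to a $K$-equivariant homotopy equivalence $g := f|_Y \colon Y \to \C P^n$ of slices. On analytic cycles $\KInd_K^G$ is the Kasparov product with the Dirac element $[\partial_{G/K}]$, which commutes with the functorial maps $g_*$ and $f_*$; and by \eqref{eq:Khomology.XY} together with the adiabatic-limit computation of Lemma~\ref{lem:comm.diagram} — which applies verbatim to the signature operator, it being of Dirac type — one has $\KInd_K^G[D_Y] = [D_X]$ and $\KInd_K^G[D_{\C P^n}] = [D_{G\times_K\C P^n}]$. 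Hence the theorem is equivalent to
\[
g_*[D_Y] = [D_{\C P^n}] \in K^{U(1)}_\bullet(\C P^n).
\]

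Next I would verify that $Y$ satisfies the hypotheses of Hattori's theorem. As in the proof of Theorem~\ref{thm:H-M}, the decomposition $TX = G\times_K(TY\oplus\mathfrak{p})$ from \eqref{eq:decom.tangent}, together with the almost-complex structures on $X$ and on $\mathfrak{p}$ (inherited from $G/K$), endows $Y$ with a $K$-invariant almost-complex structure. Since $G/K$ is contractible, restriction induces $H^*(X,\Z) \cong H^*(Y,\Z)$, the bundle $G\times_K\mathfrak{p}$ is trivial, and therefore $c_1(TY)$ is identified with $c_1(TX) = \pm(n+1)x$, with $x$ still a generator of $H^2(Y,\Z)\cong H^2(\C P^n,\Z)$ (via $g^*$); in particular $b_1(Y) = b_1(X) = 0$. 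Again as in the proof of Theorem~\ref{thm:H-M}, the hypothesis that the $G$-action on $X$ is properly non-trivial is, via $X_{(K)} = G\cdot Y^K \cong G/K\times Y^K$ (Lemma~9 of~\cite{HM16}), equivalent to the $U(1)$-action on $Y$ being non-trivial. Assuming $n \ge 2$, so that $\dim_\R Y = 2n > 2$ as required in Theorem~\ref{thm:Hat0}, all the hypotheses of Theorem~\ref{thm:Hat0} and of Hattori's theorem on Petrie's conjecture hold for $(Y,g)$.

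The last step — which I expect to be the main obstacle — is to deduce the \emph{equivariant} identity $g_*[D_Y] = [D_{\C P^n}]$ in $K^{U(1)}_\bullet(\C P^n)$. Hattori's conclusion $\mathcal{L}(Y) = g^*\mathcal{L}(\C P^n)$ yields, by Lemma~\ref{lem:h.e.CPn}, only the non-equivariant equality $g_*[D_Y] = [D_{\C P^n}]$ in $K_\bullet(\C P^n)$; upgrading this to $K^{U(1)}_\bullet$ uses the equivariant content of Hattori's argument rather than just its conclusion. Concretely, one expands the signature operator of the almost-complex manifold $Y^{2n}$ (Hirzebruch-style) as a combination of $\Spinc$-Dirac operators twisted by line bundles $L_k$ with $c_1(L_k) = kx$ and $k \equiv n+1 \pmod 2$: the interior terms $|k| < n+1$ have vanishing $U(1)$-equivariant index by Theorem~\ref{thm:Hat0}, while the extremal terms $k = \pm(n+1)$, together with the $R(U(1))$-module structure on $K^{U(1)}_\bullet(\C P^n)$ and the $K$-homotopy equivalence $g$, are pinned down by the corresponding data on $\C P^n$; combined with an equivariant refinement of the Chern-character argument of Lemma~\ref{lem:h.e.CPn}, this gives the required equality, and applying $\KInd_K^G$ finishes the proof. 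Besides this step, the two points needing care are (i) that $g = f|_Y$ really is a $K$-homotopy equivalence — resting on the essential uniqueness of the slice (Remark~\ref{rem:Slice.unique}) and the classifying-space property of $G/K$ — and (ii) the identification, under $\KInd_K^G$, of the signature class on $Y$ with that on $X$, which amounts to matching the Clifford module $\Lambda^\bullet T^*X\otimes\C$ with $(\Lambda^\bullet T^*Y\otimes\C)\,\widehat\otimes\,\End(S_{\mathfrak{p}})$ in the adiabatic limit.
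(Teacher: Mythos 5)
Your plan follows the same skeleton as the paper's proof: pass to a $U(1)$-slice $Y$ of $X$ via Abels' theorem, argue that the $SU(1,1)$-equivariant homotopy equivalence $f$ induces a $U(1)$-equivariant homotopy equivalence of slices, invoke Hattori's theorem on $Y$, convert $\mathcal{L}(Y)=g^*\mathcal{L}(\C P^n)$ into an equality of signature classes via Lemma~\ref{lem:h.e.CPn}, and transport everything back with $\KInd_K^G$. You are more careful than the paper at two points: (i) you explicitly justify that $f$ can be $G$-homotoped to respect the fibration over $G/K$ so that $f|_Y$ is a $K$-map (the paper asserts this restriction without comment), and (ii) you verify the hypotheses of Hattori's theorem for $(Y,g)$ — $b_1(Y)=0$, $c_1(TY)=\pm(n+1)x$, non-triviality of the circle action — via $H^*(X)\cong H^*(Y)$ and Lemma~9 of~\cite{HM16}, which the paper leaves implicit. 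On these points your write-up is a genuine improvement.

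The key observation you make — that Lemma~\ref{lem:h.e.CPn} only produces an equality $g_*[D_Y]=[D_{\C P^n}]$ in \emph{non-equivariant} geometric $K$-homology, whereas what must be fed into $\KInd_K^G$ is an equality in $K^{U(1)}_\bullet(\C P^n)$ — identifies a real step that the paper's one-line conclusion (``by Lemma~\ref{lem:h.e.CPn} and induction on $K$-homology, the conclusion follows'') glosses over. The forgetful map $K^{U(1)}_\bullet(\C P^n)\to K_\bullet(\C P^n)$ is far from injective, so the non-equivariant equality does not by itself determine the equivariant class. Your proposed patch (Hirzebruch decomposition of the signature operator into $\Spinc$-Dirac operators twisted by $L_k$, Hattori vanishing for $|k|<n+1$, pinning down the extremal terms using the $R(U(1))$-module structure and the $K$-homotopy equivalence) is a sensible route, but note that it is not a complete argument as written: matching the equivariant \emph{indices} of these twisted Dirac operators controls the image under $\ind_K\colon K^{U(1)}_0(\C P^n)\to R(U(1))$, which does not suffice to pin down the class itself, since $\ind_K$ is far from injective. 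You would need to pair against enough elements of $K^0_{U(1)}(\C P^n)$ (e.g.\ using the freeness of $K^{U(1)}_\bullet(\C P^n)$ as an $R(U(1))$-module and equivariant Poincar\'e duality), or else prove an equivariant version of Kaminker's argument in Lemma~\ref{lem:h.e.CPn} outright. Either way, you are correct that the main remaining work lies here, and the paper's proof does not supply it.
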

	
	\begin{proof}
		Let $Y$ be a $U(1)$-slice of $X$.
		There exist $SU(1,1)$-equivariant maps 
		\[
		f: SU(1, 1)\times_{U(1)} Y\rightarrow SU(1,1)\times_{U(1)}\C P^n,\qquad g: SU(1,1)\times_{U(1)}\C P^n\rightarrow SU(1, 1)\times_{U(1)} Y,
		\]
		such that $g\circ f$ and $f\circ g$ are $SU(1,1)$-equivariantly homotopic to the respective identity maps. 
		These maps induce $U(1)$-equivariant homotopies from $f|_Y\circ g|_{\C P^n}$ and $g|_{\C P^n}\circ f|_Y$ to the identity maps.
		Thus, $Y$ is $U(1)$-homotopic to $\C P^n$. 
		Hattori's result now implies that $\cL(Y)=f^*\cL(\C P^n).$ 
		Finally, by Lemma~\ref{lem:h.e.CPn} and induction on $K$-homology, the conclusion follows.
	\end{proof}
	\appendix
	\section{Equality of Mathai-Zhang Index and Integration Trace}
	\label{sec:AppA}
		Let $M$ be a smooth manifold on which a locally compact group $G$ acts cocompactly. Let $dg$ be a fixed left-invariant Haar measure on $G$ and $\chi$ be the modular character of $G$, defined by $dg^{-1}=\chi(g)dg$. Suppose that $M$ has a $G$-invariant Riemannian metric and that $D$ is a $G$-invariant Dirac operator on $M$ acting on sections of a $G$-equivariant Dirac bundle $E\rightarrow M$. Then $D$ defines an element of the $G$-equivariant analytic $K$-homology $K^G_0(M)$,
		$$[D]=\left[\left(L^2(E),\phi,\frac{D}{\sqrt{D^2+1}}\right)\right]\in K^G_0(M),$$
		where $\phi:C_0(M)\rightarrow L^2(E)$ is the usual representation given by pointwise multiplication.
		
		It was shown in Bunke's appendix to \cite{MZ} that, for $G$ a {\it unimodular} locally compact group, the Mathai-Zhang index of $D$ (see \cite{MZ} section 2 for the definitions) equals the composition
		$$I:K_0^G(M)\xrightarrow{\ind_G} K_0(C^*(G))\xrightarrow{\int_G}\mathbb{Z}$$
		applied to $[D]$. Here $\ind_G$ is the equivariant (analytic) index map and $\int_G$ is the integration trace on $K_0(C^*(G))$. More precisely, $\int_G$ is induced by the map $C^*(G)\rightarrow\mathbb{Z}$ defined on the dense subalgebra $C_c(G)$ by 
		$$f\mapsto\int_G f(g)\,dg.$$
		
		The purpose of this appendix is to provide a proof, using Bunke's technique, that the Mathai-Zhang index equals the integration trace when $G$ is an arbitrary locally compact group.
				
		Let $c$ be a (compactly supported) cut-off function on $M$, and let $L^2_c(E)^G$ be the $L^2$-completion of the space $\{cs:s\in C(E)^G\}$, where $C(E)^G$ is the space of $G$-invariant continuous sections of $E$. We define the Sobolev analogues $H^i_c(E)^G$, $i\geq 0$, analogously. One sees that elements of $H^i_c(E)^G$ are precisely those of the form $c\mu$, where $\mu\in H^i_{\textnormal{loc}}(E)^G$, the space of $G$-invariant, locally $H^i$ sections.
		
		Let $P_c^\chi: L^2(E)\rightarrow L^2_c(M,E)^G$ be given by
		$$P_c^\chi\mu=\frac{c(x)}{\int_G\chi^-1(g)(c(g^{-1}x)^2)\,dg}\int_G\chi^{-1}(g)c(g^{-1}x)g\mu(g^{-1}x)\,dg.$$ 
		By Proposition 3.1 of \cite{TYZ} (adapted to a left Haar measure), $P_c^\chi$ is the orthogonal projection $L^2(E)\rightarrow L^2_c(E)^G$. Now for each $t\in [0,1]$, define a linear map $P_c^{\chi^t}$ on $L^2(E)\ni\mu$ by		$$P_c^{\chi^t}\mu=\frac{c(x)}{\int_G(\chi^t)^{-1}(g)(c(g^{-1}x)^2)\,dg}\int_G(\chi^t)^{-1}(g)c(g^{-1}x)g\mu(g^{-1}x)\,dg.$$
		One can verify that each $P_c^{\chi^t}$ is a projection $L^2(E)\rightarrow L^2_c(E)^G$, orthogonal when $t=1$.
		
		For each $t$, $P_c^{\chi^t}$ is a bounded operator, and the path $t\mapsto P_c^{\chi^t}$ is continuous in the strong$^*$ operator topology on $L^2(E)$. Now for each $t\in [0,1]$, define $\tilde{D}_t:H^1_c(E)^G\rightarrow L^2_c(E)^G$ by 
		$$c\mu\mapsto P^{\chi^t}_c D(c\mu),$$
		for $\mu\in H^1_\textnormal{loc}(E)^G.$ It follows that the path $t\mapsto\tilde{D}_t$ is continuous in the same sense and has end points
		$$\tilde{D}_0: c\mu\mapsto P^1_c D(c\mu),\qquad\tilde{D}_1: c\mu\mapsto P^\chi_c D(c\mu).$$
		A variant of the method used to prove Lemma D.1 in \cite{MZ} shows that each $\tilde{D}_t$ is Fredholm.
		\begin{proposition}
		In the above setting, the Mathai-Zhang index of $D$ is equal to $I([D])$.
		\end{proposition}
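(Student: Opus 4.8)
The plan is to reduce the statement to Bunke's unimodular computation by deforming the non‑orthogonal projection $P^1_c$ into the orthogonal projection $P^\chi_c$. Recall from \cite{MZ} (together with the Fredholmness statement generalising \cite{MZ} Lemma D.1 recorded above) that the Mathai--Zhang index of $D$ may be computed as the ordinary Fredholm index of $\tilde D_0\colon H^1_c(E)^G\to L^2_c(E)^G$, $c\mu\mapsto P^1_cD(c\mu)$; crucially this operator involves no modular correction, so it is defined for every locally compact $G$. It therefore suffices to establish the two equalities $\ind\tilde D_0=\ind\tilde D_1$ and $\ind\tilde D_1=I([D])$, where $\tilde D_1\colon c\mu\mapsto P^\chi_cD(c\mu)$ uses the \emph{orthogonal} projection $P^\chi_c$ supplied by \cite{TYZ} Proposition 3.1.

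For the first equality I would upgrade the continuity of the path $t\mapsto\tilde D_t$. Although $t\mapsto P_c^{\chi^t}$ is only strong${}^*$‑continuous on all of $L^2(E)$, the composition $t\mapsto \tilde D_t$ is in fact \emph{norm}‑continuous as a path in the bounded operators $H^1_c(E)^G\to L^2_c(E)^G$: for $c\mu\in H^1_c(E)^G$ the section $D(c\mu)$ is supported in the compact set $\mathrm{supp}\,c$, so by properness only the fixed compact set $C=\{g\in G:g^{-1}\mathrm{supp}\,c\cap\mathrm{supp}\,c\neq\emptyset\}$ contributes to the group integrals defining $P_c^{\chi^t}\bigl(D(c\mu)\bigr)$; on $C$ the function $g\mapsto\chi^{-t}(g)=e^{-t\log\chi(g)}$ is Lipschitz in $t$ uniformly in $g$, and the denominators are bounded below by a positive constant uniformly in $t$ and in $x\in\mathrm{supp}\,c$. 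This gives $\|\tilde D_t-\tilde D_s\|\lesssim|t-s|$, and since each $\tilde D_t$ is Fredholm, homotopy invariance of the Fredholm index yields $\ind\tilde D_0=\ind\tilde D_1$. (Should this estimate prove delicate, one can instead compare kernels and cokernels directly, using that $P^1_c$ and $P^\chi_c$ are idempotents with the common range $L^2_c(E)^G$ and are hence conjugate by an invertible operator, which one checks intertwines $\tilde D_0$ and $\tilde D_1$ up to a compact perturbation.)

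For the second equality I would run Bunke's argument, now with the orthogonal projection available. Recall that $\ind_G[D]\in K_0(C^*(G))$ is represented by the index of the regular self‑adjoint operator induced by $D$ on the Hilbert $C^*(G)$‑module $\mathcal{E}$ obtained by completing $C^\infty_c(E)$ under the inner product $(s_1,s_2)(g)=\int_M(s_1(x),g\,s_2(x))\,d\mu$, and that $I=(\pi_0)_*$ is the homomorphism on $K_0$ induced by the trivial representation $\pi_0\colon C^*(G)\to\mathbb{C}$ extending $f\mapsto\int_Gf\,dg$ on $C_c(G)$. The heart of this step is to identify the Hilbert space $\mathcal{E}\otimes_{\pi_0}\mathbb{C}$ isometrically with $L^2_c(E)^G$, carrying $D\otimes 1$ to the bounded transform of $\tilde D_1$: it is precisely the orthogonality of $P^\chi_c$ --- equivalently, the presence of the modular factor $\chi^{-1}$ --- that matches $\langle s_1,s_2\rangle=\pi_0\bigl((s_1,s_2)\bigr)$ with the $L^2$ inner product on invariant sections (compare the adaptation to left Haar measure in \cite{TYZ} Proposition 3.1). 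Granting this, compatibility of the index construction $\ind_G$ with the representation $\pi_0$ --- i.e.\ that $(\pi_0)_*\ind_G[D]$ is the Fredholm index of $D\otimes 1$ on $\mathcal{E}\otimes_{\pi_0}\mathbb{C}$ --- gives $I([D])=\ind\tilde D_1$, and combining with the first equality and the description of the Mathai--Zhang index completes the proof.

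The step I expect to be the main obstacle is the isometric identification of $\mathcal{E}\otimes_{\pi_0}\mathbb{C}$ with $L^2_c(E)^G$ and the attendant bookkeeping of the modular‑character factors; by contrast, the norm‑continuity estimate in the first step, while requiring some care, should be routine given the compact support of $D(c\mu)$.
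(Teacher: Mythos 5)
Your proof follows the same strategy as the paper: build a homotopy of Fredholm operators $t\mapsto\tilde D_t$ from the modular-uncorrected $\tilde D_0=P^1_cD(c\cdot)$ to the modular-corrected $\tilde D_1=P^\chi_cD(c\cdot)$, and combine the resulting index equality with Bunke's appendix argument. Two points of comparison.

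First, you improve the continuity step. The paper only records that $t\mapsto P_c^{\chi^t}$ (and hence $t\mapsto\tilde D_t$) is strong${}^*$-continuous, and then passes to equality of $K$-homology classes and of Fredholm indices; strong${}^*$-continuity alone does not give either. Your observation that $D(c\mu)$ has support in the fixed compact set $\operatorname{supp}c$, so only a fixed compact subset of $G$ contributes to the averaging integrals, does yield norm-continuity of $t\mapsto\tilde D_t\colon H^1_c(E)^G\to L^2_c(E)^G$ and hence constancy of the Fredholm index. Your fallback of directly comparing $\tilde D_1-\tilde D_0=(P^\chi_c-P^1_c)[D,c](\cdot)$, which is a zeroth-order operator and thus compact via Rellich on $\operatorname{supp}c$, is also valid and arguably cleaner than either the homotopy or the paper's detour through $K$-homology classes.

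Second, you swap the roles of $\tilde D_0$ and $\tilde D_1$ relative to the paper. The paper takes the Mathai--Zhang index to be $\ind\tilde D_1$ ``by definition'' (the orthogonal projection, as in [TYZ], being the natural extension of [MZ] \S2 to non-unimodular $G$), and asserts that Bunke's appendix yields $\ind\tilde D_0 = I([D])$; you assert the opposite assignment, arguing that it is \emph{precisely} the modular factor $\chi^{-1}$ in $P^\chi_c$ that matches $\pi_0\bigl((s_1,s_2)\bigr)$ with the invariant-section inner product. That last heuristic is dubious: computing $\pi_0\bigl((c\mu_1,c\mu_2)\bigr)=\int_G\int_M(c\mu_1,g(c\mu_2))\,d\mu\,dg=\int_M c\,(\mu_1,\mu_2)\,d\mu$ produces no modular factor at all, and the resulting ``Bunke operator'' $c\mu\mapsto c\cdot\av(D(c\mu))$ agrees with each $\tilde D_t$ only up to the compact perturbation $P^{\chi^t}_c[D,c](\cdot)-c\,\av([D,c]\cdot)$. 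Since the homotopy (or the compactness of the difference) equates $\ind\tilde D_0$ and $\ind\tilde D_1$ anyway, the swap does not affect the conclusion, but the justification you offer for the Bunke side of the identification is not the one the paper uses and does not survive the direct inner-product computation.
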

		\begin{proof}
		The path $t\mapsto \tilde{D}_t$ of Fredholm operators defined above gives an equality of $K$-homology classes 
		$$[\tilde{D}_1] = [\tilde{D}_0]\in K_0^G(M),$$
		where for each $t\in [0,1]$, we define the class $[\tilde{D}_t]$ in the obvious way, namely
		$$[\tilde{D}_t]:=\left[\left(L^2(E),\phi,\frac{\tilde{D}_t}{\sqrt{\tilde{D}_t^2+1}}\right)\right]\in K^G_0(M).$$
		The appendix of \cite{MZ} shows that
		$$\ind(\tilde{D}_0) = I([D]),$$
		where on the left is the Fredholm index of $\tilde{D}_0$. On the other hand, the Mathai-Zhang index of the Dirac operator $D$ is equal to $\ind(\tilde{D}_1)$ by definition, and we conclude.
		\end{proof}


\end{document}